\numberwithin{equation}{section}
\def\an{\mathrm{an}}
\def\comp{\mathrm{comp}}
\def\cusp{\mathrm{cusp}}
\def\gr{\mathrm{gr}}
\def\rel{\mathrm{rel}}
\def\LCS{{\mathrm{LCS}}}
\def\SL{{\mathrm{SL}}}
\def\DR{{\mathrm{dR}}}
\def\B{\mathrm B}
\def\dbs{\backslash\hspace{-.04in}\backslash}
\def\e{\textbf{e}}
\def\h{\mathfrak h}
\def\kk{\Bbbk}
\def\li{\langle}
\def\ri{\rangle}
\def\u{\mathfrak u}
\def\bu{\boldsymbol{\u}}
\def\wt{\widetilde}
\def\A{\mathbb A}
\def\C{\mathbb C}
\def\cC{\check{C}}
\def\rmC{\check{\text{C}}}
\def\Eis{G}
\def\F{\mathcal F}
\def\cG{\mathcal G}
\def\G{\mathbb G}
\def\cH{\mathcal H}
\def\H{\mathbb H}
\def\Hbar{\overline{\mathcal H}}
\def\T{\mathsf{T}}
\def\Ss{\mathsf{S}}
\def\E{\mathcal E}
\def\LL{{\mathbb L}}
\def\M{\mathcal{M}}
\def\Mbar{\overline{\mathcal M}}
\def\OO{\mathcal O}
\def\Q{\mathbb Q}
\def\cR{\check{R}}
\def\cU{\mathcal U}
\def\bU{\boldsymbol{\cU}}
\def\U{\mathfrak U}
\def\V{\mathbb V}
\def\cV{\mathcal V}
\def\Vbar{\overline{\cV}}
\def\X{\mathsf{X}}
\def\Xbar{{\overline{X}}}
\def\Y{{\mathsf{Y}}}
\def\Ybar{{\overline{Y}}}
\def\Z{\mathbb Z}
\newtheorem*{theorem*}{Theorem}
\newtheorem{theorem}{Theorem}[section]
\newtheorem{proposition}[theorem]{Proposition}
\newtheorem{corollary}[theorem]{Corollary}
\newtheorem{lemma}[theorem]{Lemma}
\theoremstyle{definition}
\newtheorem{definition}[theorem]{Definition}
\newtheorem{example}[theorem]{Example}
\theoremstyle{remark}
\newtheorem{remark}[theorem]{Remark}
\newcommand\ms{\scriptscriptstyle}
\newcommand\Aut{\operatorname{Aut}}
\newcommand\Hom{\operatorname{Hom}}
\newcommand\Gr{\operatorname{Gr}}
\newcommand\Res{\operatorname{Res}}
\newcommand\Spec{\operatorname{Spec}}
\newcommand\Sym{\operatorname{Sym}}
\begin{document}

\title{Algebraic de Rham Theory for the Relative Completion of $\SL_2(\Z)$}
\author{Ma Luo}

\date{\today}							
\keywords{periods, modular forms, iterated integrals}
\begin{abstract}
We develop an explicit algebraic de Rham theory for the relative completion of $\SL_2(\Z)$. This allows the construction of iterated integrals involving modular forms of the second kind, generalizing iterated integrals of holomorphic modular forms that were previously studied by Brown and Manin. These newly constructed iterated integrals provide all `multiple modular values' defined by Brown.
\end{abstract}

\maketitle

\tableofcontents

\section{Introduction}

Brown \cite{brown:mmm} defines multiple modular values to be periods of the coordinate ring $\OO(\cG^\rel)$ of the relative completion $\cG^\rel$ of $\SL_2(\Z)$. In this paper, we provide  an explicit algebraic de Rham theory for $\cG^\rel$, which enables us to explicitly construct iterated integrals of {\em modular forms of the second kind} (see below) that have at worst logarithmic singularities at the cusp. These newly constructed iterated integrals provide all multiple modular values, whereas previously only those `totally holomorphic' multiple modular values that are (regularized) iterated integrals of {\em holomorphic modular forms} (see below) have been studied by Brown \cite{brown:mmm} and Manin \cite{manin1,manin2}.

The relative completion $\cG^{\rel}_{/\Q}$ of $\text{SL}_2(\mathbb{Z})$ with respect to the inclusion $\rho\colon\text{SL}_2(\mathbb{Z})\xhookrightarrow{}\text{SL}_2(\mathbb{Q})$ is an extension of $\text{SL}_{2/\Q}$ by a prounipotent group $\cU^{\rel}_{/\Q}$. The Lie algebra $\u^{\rel}$ of $\cU^{\rel}$ is freely and topologically generated by \cite{hdr}
$$\prod_{n\ge 0} H^1(\text{SL}_2(\mathbb Z), S^{2n} H)^*\otimes S^{2n} H,$$
where $H$ is the standard representation of $\text{SL}_2$, and $S^{2n} H$ its $2n$-th symmetric power.  

The first step is to construct an explicit $\Q$-de Rham structure on $H^1(\SL_2(\Z), S^{2n} H)$. Recall that there is a mixed Hodge structure on $H^1(\SL_2(\Z), S^{2n} H)$, which has weight and Hodge filtrations defined over $\mathbb Q$ \cite{zucker}:
\begin{align*}
W_{2n+1}H^1(\SL_2(\Z), S^{2n} H)&=H^1_{\cusp}(\SL_2(\Z), S^{2n} H);\\
W_{4n+2}H^1(\SL_2(\Z), S^{2n} H)&=H^1(\SL_2(\Z), S^{2n} H);\\
F^{2n+1}H^1(\SL_2(\Z), S^{2n} H)&\cong\left\{
\begin{tabular}{c}
{\em holomorphic} part which consists of cohomology \\
classes that correspond to classical modular forms
\end{tabular}\right\}.
\end{align*}
Now we consider its $\Q$-de Rham structure $H^1_\DR(\M_{1,1/\Q}, S^{2n}\cH)$, where $\cH$ is the relative de Rham cohomology of the universal elliptic curve over the moduli space $\M_{1,1/\Q}$ of elliptic curves. In the holomorphic part $F^{2n+1}H^1(\SL_2(\Z), S^{2n} H)$, all $\Q$-de Rham classes correspond to classical modular forms of weight $2n+2$ with rational Fourier coefficients \cite[\S 21]{kzb}. We call these {\em holomorphic modular forms}. To obtain a complete $\Q$-de Rham basis of $H^1_\DR(\M_{1,1/\Q}, S^{2n}\cH)$, in Section \ref{nmf}, we use {\em modular forms of the second kind} as representatives for all $\Q$-de Rham classes. This differs from the traditional approach using weakly modular forms, which allows arbitrary poles at the cusp (cf. \cite{brown-hain, scholl-kazalicki}). Our representatives have at worst logarithmic singularities at the cusp. They are more suitable for constructing regularized iterated integrals (Section \ref{dbint}).
 
For each choice of a base point $x\in\M_{1,1}$, we identify $\SL_2(\Z)$ with the (orbifold) fundamental group $\pi_1(\M_{1,1}^\an,x)$. 
Denote the relative completion of $\pi_1(\M_{1,1}^\an,x)$ with respect to the inclusion $\rho_x:\SL_2(\Z)\xhookrightarrow{}\SL_2(\Q)$ by $\cG_x$. Denote the Lie algebra of its unipotent radical by $\u_x$. They are isomorphic to $\cG^\rel$ and $\u^\rel$ respectively, and they depend on the base point $x$. The Lie algebras $\u_x$ form a local system $\bu_\B$ over $\M_{1,1}^\an$. By a modification of Chen's method of power series connections (Prop. \ref{procedure}, cf. \cite{chen}), Hain \cite{hdr} finds a canonical universal flat connection $\nabla_\B$ on the Betti bundle $\bU\cong\bu_\B\otimes\OO_{\M_{1,1}^\an}$. We review this construction in Section \ref{Bstr} for a general affine curve (applicable to the case of $\M_{1,1}$). This provides, for each base point $x$, a natural Betti $\Q$-structure of the relative completion, which underlies a canonical mixed Hodge structure \cite{rc}. The connection $\nabla_\B$ is more general than the KZB connection in the elliptic curve case \cite{cee,kzb,mykzb}.

To provide a $\Q$-de Rham theory for $\cG^\rel$, we construct in Section \ref{coc} a canonical $\Q$-de Rham flat connection $\nabla_\DR$ on an algebraic de Rham vector bundle $\bu_\DR\to\M_{1,1}$. One naturally extends the Betti bundle $\bU$ and the de Rham bundle $\bu_\DR$ from $\M_{1,1}$ to $\Mbar_{1,1}$, and obtains $\overline\bU$ and $\overline\bu_\DR$ respectively. We will prove that
\begin{theorem*}[Theorem \ref{DRbdle}]
There is an isomorphism of bundles with connections 
$$(\overline\bu_\DR,\nabla_\DR)\otimes_\Q\C\approx(\overline\bU,\nabla_\B).$$
\end{theorem*}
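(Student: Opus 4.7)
My plan is to show that both $(\overline\bU,\nabla_\B)$ and $(\overline\bu_\DR,\nabla_\DR)\otimes_\Q\C$ arise from a single application of Chen's procedure (Prop. \ref{procedure}) whose input data are matched, over $\C$, by the classical comparison between group cohomology and algebraic de Rham cohomology of modular symbols. The argument has a natural two-stage structure: first identify the underlying bundles, then identify the connections, with the passage across the cusp handled by the careful choice of representatives from Section \ref{nmf}.

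For the bundle identification, I would work fiberwise at a base point $x\in\M_{1,1}(\C)$. Both $\u_x\otimes\C$ (on the Betti side) and $\u_{x,\DR}\otimes_\Q\C$ (on the de Rham side) are topologically freely generated, as pro-nilpotent Lie algebras, by $\prod_{n\ge 0}H^1(-,S^{2n}H)^*\otimes S^{2n}H$ computed in the respective theory. The Eichler--Shimura isomorphism, together with the comparison result of Zucker and Deligne, yields a canonical $\C$-linear isomorphism $H^1(\SL_2(\Z),S^{2n}H)\otimes\C\cong H^1_\DR(\M_{1,1/\Q},S^{2n}\cH)\otimes_\Q\C$. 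Since $S^{2n}H$ is identified on the two sides (as a symmetric power of the standard representation of $\SL_2$, whose underlying bundle is the one built from the relative cohomology of the universal elliptic curve), this assembles into an isomorphism of free pro-nilpotent Lie algebras at the fiber. Because both $\bU$ and $\bu_\DR$ are constructed from the same functorial assignment of cohomology to pro-nilpotent Lie algebras, the fiberwise identification spreads to an isomorphism of bundles over $\M_{1,1,\C}$.

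For the connection identification, I would invoke the construction of Prop. \ref{procedure}. Both $\nabla_\B$ and $\nabla_\DR$ are produced by Chen's recursive procedure: one starts with 1-forms on $\M_{1,1}$ whose cohomology classes are dual to the chosen generators, and iteratively adds higher-weight correction terms to achieve flatness. The Betti construction uses $C^\infty$ (or harmonic) representatives while the de Rham construction uses the algebraic representatives from Section \ref{nmf}. The cohomology-level comparison above identifies these two classes over $\C$, so their representatives differ by an exact 1-form; this difference is precisely a gauge transformation of the input, and Chen's procedure is natural under such gauges. Since both connections are flat and both realize the relative completion as monodromy/periods, uniqueness in the Chen construction (up to gauge) forces them to coincide after conjugating by this gauge, proving the isomorphism over the open part $\M_{1,1,\C}$.

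The main obstacle is the extension across the cusp, i.e.\ upgrading the isomorphism from $\M_{1,1,\C}$ to $\overline\M_{1,1,\C}$. On the Betti side, $\overline\bU$ is the canonical Deligne extension, characterized by its logarithmic growth near the cusp. On the de Rham side, $\overline\bu_\DR$ is built algebraically using the representatives from Section \ref{nmf}, which by construction have at worst logarithmic singularities at the cusp (rather than the arbitrary poles allowed by a weakly modular approach). I expect the hard part to be showing that the gauge transformation used in the previous step extends holomorphically across the cusp and intertwines the two canonical extensions. This compatibility should reduce, after $q$-expansion, to matching the logarithmic terms of the algebraic representatives with Deligne's local nilpotent residue on the Betti side, and is ultimately the reason the representatives in Section \ref{nmf} were engineered as they were.
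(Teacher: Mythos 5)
Your overall architecture is close to the paper's: reduce to the open curve, identify the fibers of the two bundles with $\cU^\rel$, and then compare the two flat structures. The underlying-bundle identification in your first stage is essentially already in place in Section \ref{coc} (the isomorphism $\bu_\DR\otimes_\Q\C\cong\bu_\B\otimes_\Q\OO_{\M_{1,1}^\an}$ of holomorphic bundles), so that part is fine. The gap is in your second stage. You assert that because the degree-one input forms represent the same classes in $H^1$ over $\C$, ``Chen's procedure is natural under such gauges'' and ``uniqueness in the Chen construction (up to gauge)'' forces the two connections to coincide after a gauge transformation. Neither assertion is a theorem as stated. In the recursion of Proposition \ref{procedure} the correction term $\Xi_n$ is a primitive of $(\tfrac12[\Omega_{n-1},\Omega_{n-1}])_n$ and is therefore determined only up to a $D$-closed element of $K^1(X;\bu_n)$; since $H^1(X;\bu_n)$ is far from zero, two runs of the procedure with cohomologous inputs produce genuinely different connection forms, and there is no canonical gauge relating them supplied by the construction itself. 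What is actually true, and what the paper uses, is that both connections satisfy the cohomological criterion of Proposition \ref{char} (because $\Omega_1$ and $\wt\Omega_1$ each represent the identity on every $H^1(\M_{1,1}^\an,S^{2n}\H)$ resp.\ $H^1_\DR(\M_{1,1},S^{2n}\cH)$), so both parallel transports realize the relative completion, giving the identifications (\ref{theta}) and (\ref{thetax}) of the fibers. But this only says the two monodromy representations $\rho^\B_x,\rho^\DR_x:\pi_1(\M_{1,1}^\an,x)\to\Aut(\cU^\rel)$ are abstractly intertwined by \emph{some} automorphism compatible with the completion maps; to conclude they agree up to an \emph{inner} automorphism of $\cU^\rel$ --- which is what an isomorphism of bundles with connection requires --- the paper invokes a rigidity lemma \cite[Lem.~14.1]{kzb} applied to $\theta^\B_x,\theta^\DR_x:\cG_x\to\Aut(\cU^\rel)$. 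That rigidity input (or an equivalent degree-by-degree construction of the intertwining gauge) is the missing idea in your argument.

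A secondary point: you locate the ``hard part'' at the cusp, but in the paper this is the easy step. The bundle $\overline\bu_\DR$ is built from the start out of $\G_m$-invariant logarithmic forms on $\Ybar$, so $\nabla_\DR$ has a regular singularity with nilpotent residue at the cusp, and $\overline\bU$ is Deligne's canonical extension; once the isomorphism of bundles with connection is established over $\M_{1,1}^\an$, it extends automatically by the uniqueness and functoriality of canonical extensions. No $q$-expansion matching of logarithmic terms is needed.
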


 Moreover, we have that
\begin{theorem*}[Theorem \ref{QDRstr}]
The de Rham vector bundle $(\bu_\DR,\nabla_\DR)$ allows us to concretely construct a $\Q$-de Rham structure on the relative completion of $\SL_2(\Z)$ for each base point $x\in\M_{1,1}(\Q)$.
\end{theorem*}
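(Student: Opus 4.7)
The plan is to transfer the bundle-level comparison of Theorem~\ref{DRbdle} to the fiber over a rational base point, thereby producing an explicit $\Q$-form of $\cG^\rel\otimes_\Q\C$. First I would fix $x\in\M_{1,1}(\Q)$ and set
$$\u^\rel_{x,\DR}:=(\bu_\DR)_x,$$
which is a pro-nilpotent $\Q$-Lie algebra, with bracket inherited from the construction of $\bu_\DR$ in Section~\ref{coc}. Exponentiating yields a prounipotent $\Q$-group $\cU^\rel_{x,\DR}:=\exp(\u^\rel_{x,\DR})$, and since the $\SL_2$-action on $\cH$ is already defined over $\Q$, it restricts fiberwise to a bracket-preserving action of $\SL_{2/\Q}$ on $\u^\rel_{x,\DR}$. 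I would then define
$$\cG^\rel_{x,\DR}:=\cU^\rel_{x,\DR}\rtimes\SL_{2/\Q}$$
as the candidate $\Q$-de Rham structure on the relative completion.

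To verify that $\cG^\rel_{x,\DR}$ is indeed a $\Q$-form of $\cG^\rel\otimes_\Q\C$, I would take the fiber at $x$ of the horizontal isomorphism supplied by Theorem~\ref{DRbdle}, obtaining a $\C$-linear isomorphism
$$\u^\rel_{x,\DR}\otimes_\Q\C\xrightarrow{\sim}(\bu_\B)_x,$$
which respects Lie brackets and the $\SL_2$-action because the two sides are produced by Chen's procedure (Prop.~\ref{procedure}) from universal connections built on dual bases of the generating cohomology groups $\bigoplus_n H^1(\SL_2(\Z),S^{2n}H)^*\otimes S^{2n}H$ and their de Rham counterparts. Exponentiating this isomorphism and taking the semidirect product with $\SL_{2/\Q}$ on each side yields
$$\cG^\rel_{x,\DR}\otimes_\Q\C\xrightarrow{\sim}\cG_x\otimes_\Q\C\cong\cG^\rel\otimes_\Q\C.$$
Dualizing, the coordinate ring $\OO(\cG^\rel_{x,\DR})$ provides a $\Q$-de Rham lattice inside $\OO(\cG^\rel)\otimes_\Q\C$, and the corresponding periods are then computed by iterated integration of $\nabla_\DR$ along paths in $\M_{1,1}^\an$, as one expects.

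The principal obstacle will be verifying that the Lie bracket and the $\SL_2$-action on $\u^\rel_{x,\DR}$ really are defined over $\Q$ before tensoring with $\C$, and not merely afterwards. This forces one to trace through the explicit construction of $\nabla_\DR$ in Section~\ref{coc} and confirm that the representatives selected there --- namely the modular forms of the second kind from Section~\ref{nmf}, together with an explicit $\Q$-basis of $H^1_\DR(\M_{1,1/\Q},S^{2n}\cH)$ --- encode the generators and commutation relations of the pro-nilpotent Lie algebra with rational structure constants at a rational base point. Once this rationality is secured, the remaining checks (compatibility of the group law, compatibility of the $\SL_2$-action, and identification of periods with iterated integrals) follow formally from Theorem~\ref{DRbdle} and the Chen--Hain formalism reviewed in Section~\ref{Bstr}.
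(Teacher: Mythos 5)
Your proposal has a genuine gap, and in the paper's logical order it is also circular. You take Theorem \ref{DRbdle} as an input, but in the paper that theorem is proved \emph{after} Theorem \ref{QDRstr}: its proof identifies the fibers of $\bu_\DR$ and $\bU$ with $\cU^\rel$ precisely by means of the isomorphism $\Theta_x$ of (\ref{theta}), which is constructed in the proof of Theorem \ref{QDRstr}, and then invokes a rigidity lemma. So you cannot use the bundle comparison to produce the fiberwise comparison --- that is the direction the paper has to work hard for, and your parenthetical justification (``because the two sides are produced by Chen's procedure from dual bases'') is exactly the nontrivial point being assumed rather than proved.

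The step your proposal omits is the actual content of the paper's argument: one pulls back the $\G_m$-invariant connection forms $\Omega^{(0)}$, $\Omega^{(1)}$ along $\psi$ to $\SL_2(\Z)$-invariant $1$-forms on $\h$, uses Chen's parallel transport on the two opens glued via the gauge transformation $g=1+F$ to produce a homomorphism $\tilde\rho_x\colon\pi_1(\M_{1,1}^\an,x)\to\SL_2\ltimes\cU^\rel\cong\cG^\rel_\C$, and then verifies via the cohomological criterion of Proposition \ref{char} --- using that $\wt\Omega_1$ represents the identity map on each $H^1_\DR(\M_{1,1},S^{2n}\cH)$ --- that $\tilde\rho_x$ induces the isomorphism $\Theta_x\colon\cG^\DR_x\times_\Q\C\to\cG^\rel_\C$. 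This is what makes the $\Q$-form you write down a de Rham structure \emph{on the relative completion}, i.e.\ compatible with the Zariski-dense map from $\SL_2(\Z)$; an abstract isomorphism of free pronilpotent Lie algebras on isomorphic generating $\SL_2$-modules would not pin down the comparison or the periods. By contrast, the rationality worry you flag at the end is not the hard part: for $x\in\M_{1,1}(\Q)$ the fiber $(\bu_\DR)_x$ is by construction the free pronilpotent Lie algebra on $\prod_n H^1_\DR(\M_{1,1},S^{2n}\cH)^*\otimes S^{2n}\cH_x$, which is manifestly defined over $\Q$, as is the connection form $\wt\Omega$.
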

To find the connection $\nabla_\DR$, we use a $\rmC$ech--de Rham version of Chen's method of power series connections (Prop. \ref{procedure}). Geometrically, we trivialize $\bu_\DR$ on a $\rmC$ech open cover of $\Mbar_{1,1}$ consisting of $\Mbar_{1,1}-\{[i]\}$ and $\Mbar_{1,1}-\{[\rho]\}$, with $\rho=e^{2\pi i/3}$. Chen's method provides an inductive algorithm for constructing the connections on both opens, and for finding the gauge transformation on their intersection. This geometric construction is reminiscent of the description for the elliptic KZB connection in \cite{mykzb}.

One of the main applications for our $\Q$-de Rham theory is the construction of all closed\footnote{i.e. homotopy invariant.} iterated integrals of {\em modular forms of the second kind}, which enables us to provide all multiple modular values. In the last part (Part \ref{pmf}), we illustrate with some explicit examples of non-totally holomorphic multiple modular values. In Section \ref{pcf}, we compute the quasi-periods of the Ramanujan cusp form $\Delta$ of weight 12, cf. \cite{brown:real,brown-hain}. In Section \ref{dbint}, we explicitly construct a double integral, i.e. iterated integral of length two, that involves a {\em non-holomorphic modular form}\footnote{This refers to a modular form of the second kind that is not a holomorphic modular form.}. This explicit construction is applicable to arbitrary lengths, from which one can compute all multiple modular values.  

\bigskip
\noindent{\em Acknowledgements:} This work was partially supported by ERC grant 724638. Most of this paper was written in Duke University as a part of my thesis. The paper was revised first in a visit to Hausdorff Research Institute for Mathematics, then at University of Oxford. I would like to thank my advisor, Richard Hain, for his support and suggestions in writing this paper. I would like to thank Francis Brown for sharing his algorithm and numerical results on quasi-periods. I would like to thank Erik Panzer for helping me with numerical computations. I would also like to thank the referees for careful readings, instructive suggestions and useful comments. 

\part{Background}

\section{The Moduli Space $\Mbar_{1,1}$ and its Open Cover}\label{basic}

In this section, we review the construction of the moduli stack of elliptic curves $\M_{1,1}$ and its Deligne--Mumford compactification $\Mbar_{1,1}$. Full details can be found in \cite{km}, complemented by \cite{olsson} for stacks.

\subsection{The moduli stack $\Mbar_{1,1}$ of stable elliptic curves}
Although the constructions below work for any field that is not of characteristic 2 or 3, we will work over $\Q$ exclusively. The moduli stack $\Mbar_{1,1/\Q}$ of stable elliptic curves is the stack quotient of 
$$\Ybar:=\mathbb A^2-\{(0,0)\}$$
by a $\G_m$-action
$$\lambda\cdot(u,v)=(\lambda^{4}u,\lambda^{6}v).$$ This $\G_m$-action is equivalent to a grading on the coordinate ring 
$$\OO(\Ybar):=\Q[u,v]=\bigoplus_d \gr_d \OO(\Ybar)$$
given by $\deg(u)=4$ and $\deg(v)=6$.

The discriminant function $$\Delta:=u^3-27v^2$$ has weight 12. Let $D$ be the discriminant locus defined by $\Delta=0$. The moduli stack $\M_{1,1}$ of elliptic curves is the stack quotient of 
$$Y:=\A^2-D$$
by the same $\G_m$-action above. 

\begin{remark}
The scheme $Y$ can be viewed as the moduli space of elliptic curves with a non-zero abelian differential. A point $(u,v)$ on $Y$ corresponds to the isomorphism class of the elliptic curve $y^2=4x^3-ux-v$ with an abelian differential $\frac{dx}{y}$. The $\G_m$-action acts on the abelian differential by $\lambda\cdot\frac{dx}{y}=\lambda^{-1}\frac{dx}{y}$, as it acts on the points of the elliptic curve by $\lambda\cdot (x,y)=(\lambda^2 x,\lambda^3 y)$.
\end{remark}

\begin{remark}
One could think of $\Mbar_{1,1}$ as a weighted projective space \cite{dolgachev}, which would make our later discussions on $\Mbar_{1,1}$ more natural. In the next subsection, we will define an ``affine" open cover of $\Mbar_{1,1}$ analogous to that of a projective space, and later use this open cover of $\Mbar_{1,1}$ to compute $\rmC$ech cohomology with twisted coefficients.
\end{remark}

\subsection{An open cover of $\Mbar_{1,1}$}\label{aoc}
Define $$Y_0:=\Spec\Q[u,v,u^{-1}],\quad Y_1:=\Spec\Q[u,v,v^{-1}].$$
Since $\Ybar=\mathbb A^2-\{(0,0)\}$, we have $\Ybar=Y_0\cup Y_1$. The $\G_m$-action on $\Ybar$: $\lambda\cdot(u,v)=(\lambda^{4}u,\lambda^{6}v)$, restricts to act on both $Y_0$ and $Y_1$. Note that for $i=0,1$, the coordinate rings $$\OO(Y_i)=\bigoplus_d\gr_d\OO(Y_i)$$ are graded with $u$, $v$, $u^{-1}$, $v^{-1}$ having weights $4$, $6$, $-4$, $-6$, respectively. Define 
\begin{equation}\label{opencover}
U_0:=\G_m\dbs Y_0,\quad U_1:=\G_m\dbs Y_1
\end{equation}
to be the stack quotients of the $\G_m$-action. Then $$\U=\{U_0,U_1\}$$ forms an open cover of $\Mbar_{1,1}$.

\begin{remark}
Analytically, it is easy to see that $U_0=\Mbar_{1,1}-\{[\rho]\}$ and $U_1=\Mbar_{1,1}-\{[i]\}$ with $\rho=e^{2\pi i/3}$. The cusp $C=[i\infty]\in\Mbar_{1,1}$, which corresponds to the isomorphism class of a nodal cubic, is in both $U_0$ and $U_1$.
\end{remark}

To develop a $\Q$-de Rham theory, we will use the stack descriptions (\ref{opencover}) of the open cover $\U=\{U_0,U_1\}$ of $\Mbar_{1,1}$, which means working $\G_m$-equivariantly on $Y_0$, $Y_1$ and $\Ybar$.

\section{Vector Bundles $S^{2n}\cH$ on $\M_{1,1}$ and their Canonical Extensions $S^{2n}\Hbar$ on $\Mbar_{1,1}$}

In this section, we follow the exposition in \cite[\S 2.6]{brown-hain} very closely, with notations adapted to this paper.
 
\subsection{The Gauss--Manin connection on $\cH$ over $\M_{1,1}$}\label{gaussmanin}
Define a trivial rank two vector bundle $\Hbar$ on $\Ybar$ by
$$\Hbar:=\OO_\Ybar\Ss\oplus\OO_\Ybar\T,$$
where the multiplicative group $\G_m$ acts on it by
$$\lambda\cdot\Ss=\lambda\Ss,\quad \lambda\cdot\T=\lambda^{-1}\T,$$
so $\Ss$ and $\T$ have weights $+1$ and $-1$ respectively. This vector bundle $\Hbar$ and its restriction $\cH$ to $Y$, descend to vector bundles $\Hbar$ over $\Mbar_{1,1}$ and $\cH$ over $\M_{1,1}$. 

Define the connection on $\Hbar$ and its symmetric powers
$$S^{2n}\Hbar:=\Sym^{2n}\Hbar=\bigoplus_{s+t=2n}\OO_\Ybar\Ss^s\T^t$$
by
\begin{equation}\label{nabla0}
\nabla_0 =d+\left(-\frac{1}{12}\frac{d\Delta}{\Delta}\T+\frac{3\alpha}{2\Delta}\Ss\right)\frac{\partial}{\partial \T}+\left(-\frac{u\alpha}{8\Delta}\T+\frac{1}{12}\frac{d\Delta}{\Delta}\Ss\right)\frac{\partial}{\partial \Ss},
\end{equation}
where
$\alpha=2udv-3vdu$, $\Delta=u^3-27v^2$ and $\frac{\partial}{\partial \Ss}, \frac{\partial}{\partial \T}$ a dual basis for $\Ss,\T$. This explicit formula was worked out in \cite[Prop. 19.6]{kzb}.\footnote{In \cite[\S 2.6]{brown-hain}, the same connection $\nabla_0$ is written in terms of 1-forms $\psi=\frac{1}{12}\frac{d\Delta}{\Delta}$ and $\omega=\frac{3\alpha}{2\Delta}$. } The connection is $\G_m$-invariant, and has regular singularities along the discriminant locus $D$. Note that when pulled back to each $\G_m$-orbit in $\Ybar$, this connection is the trivial connection $d$. Therefore, it descends to a connection on $\Hbar$, and its symmetric powers
$$S^{2n}\Hbar:=\Sym^{2n}\Hbar$$ over $\Mbar_{1,1}$. These bundles are the canonical extensions of $\cH$ and $S^{2n}\cH:=\Sym^{2n}\cH$ over $\M_{1,1}$.

\subsection{The local system $\H$ over $\M_{1,1}^\an$}

Let $\H:=R^1\pi_*\C$, where $\pi:\E^\an\to\M_{1,1}^\an$ is the universal elliptic curve. It is proved in \cite[Prop 5.2]{umem} that by the relative algebraic de Rham theorem, there is a natural isomorphism
\begin{equation}\label{reladr}
\cH\otimes_\Q \C\cong \H_\B\otimes_\Q \OO_{Y^\an},
\end{equation}
of bundles with connection over $Y^\an:=Y(\C)$, where $\H_\B:=R^1\pi_*\Q$, and $\cH$ provides a $\Q$-de Rham structure on $\H$. The bundle on the right hand side is endowed with the Gauss--Manin connection. Under the isomorphism, the sections $\Ss$ and $\T$ of $\cH$ over $Y$ correspond to de Rham classes represented by algebraic 1-forms $xdx/y$ and $dx/y$ respectively (cf. \cite[Prop. 2.1]{mykzb}).  

For each $n$, define the $2n$-th symmetric power
$S^{2n}\H:=\Sym^{2n}\H$
of $\H$ over $\M_{1,1}^\an$, then $S^{2n}\cH$ over $\M_{1,1}$ provides a $\Q$-de Rham structure.

\part{De Rham Theory for the Relative Completion of $\SL_2(\Z)$}\label{rcsl2}
\section{De Rham Cohomology with Twisted Coefficients}\label{ADR}

The relative completion $\cG^{\rel}$ of $\text{SL}_2(\mathbb{Z})$ with respect to the inclusion $\rho:\text{SL}_2(\mathbb{Z})\xhookrightarrow{}\text{SL}_2(\mathbb{Q})$, is an extension of $\text{SL}_2$ by a prounipotent group $\cU^{\rel}$. The Lie algebra $\u^{\rel}$ of $\cU^{\rel}$ is freely topologically generated by 
$$\prod_{n\ge 0} H^1(\text{SL}_2(\mathbb Z), S^{2n} H)^*\otimes S^{2n} H,$$
where $H$ is the standard representation of $\text{SL}_2$, and $S^{2n} H$ its $2n$-th symmetric power.

We identify $H^1(\text{SL}_2(\mathbb Z), S^{2n} H)$ with $H^1(\mathcal M_{1,1}^{\an}, S^{2n}\mathbb H_\B)$. To develop a $\Q$-de Rham theory for $\cG^{\rel}$, it is necessary to find a $\Q$-de Rham structure $H^1_\DR(\M_{1,1}, S^{2n}\cH)$ on $H^1(\mathcal M_{1,1}^{\an}, S^{2n}\mathbb H)$ first. 

It is known classically that one can describe the de Rham cohomology groups using ``differential forms of the second kind". In the context of modular forms, this has be done explicitly where the forms are known as {\it weakly modular forms} in the literature: see \cite{scholl-kazalicki}, and more recently \cite{brown-hain}. Our approach in this section will lead to yet another explicit description using what we call {\it modular forms of the second kind} (Section \ref{nmf}). The advantage is that our forms only have logarithmic singularities at the cusp while weakly modular forms can have arbitrary poles; this property lends itself well in constructing regularized iterated integrals (Section \ref{dbint}). The disadvantage is that we need to use a $\rmC$ech--de Rham complex which is natural but somewhat technical.

\subsection{Algebraic de Rham theorem}
Let $X$ be a smooth quasi-projective variety defined over $\kk$. Without loss of generality one can assume $X=\overline{X}-C$, where $\overline{X}$ is smooth projective, and $C$ is a normal crossing divisor in $\overline{X}$. Given a vector bundle $(\mathcal{V},\nabla)$ with flat connection over $X$, having regular singularities along $C$, denote by $\mathbb V$ the local system of horizontal sections of $\mathcal{V}^{\an}$ over $X^{\an}$. Define the twisted de Rham complex
$$\Omega_X^{\bullet}(\mathcal V):=\Omega_X^{\bullet}\otimes_{\OO_X}\mathcal V,$$
and denote its hypercohomology by
$H^{\bullet}_\DR(X,\mathcal{V}):=\mathbb H^{\bullet}(X,\Omega_X^{\bullet}(\mathcal V))$.
Deligne \cite[Cor. 6.3]{deligne:ode} proved the following version of the algebraic de Rham theorem for de Rham cohomology with twisted coefficients.
\begin{theorem}\label{DRstr}
 There is an isomorphism
\begin{equation}
H^{\bullet}_\DR(X,\mathcal{V})\otimes_\kk\mathbb C\cong H^{\bullet}(X^{\an},\mathbb V).
\end{equation}
\end{theorem}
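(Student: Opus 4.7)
The plan is to follow Deligne's classical argument from \cite{deligne:ode}, which proceeds in three conceptual steps: an analytic comparison via the holomorphic Poincar\'e lemma, a passage from $X$ to $\overline{X}$ using the canonical extension and the logarithmic de Rham complex, and finally Serre's GAGA on the smooth projective $\overline{X}$.

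First, on the analytic side, the flatness of $\nabla$ yields a twisted holomorphic Poincar\'e lemma: the inclusion of the local system of horizontal sections $\mathbb{V} \hookrightarrow \Omega^{\bullet}_{X^{\an}}(\mathcal{V}^{\an})$ is a quasi-isomorphism of sheaves of complexes on $X^{\an}$. Taking hypercohomology yields
$$H^{\bullet}(X^{\an}, \mathbb{V}) \cong \mathbb{H}^{\bullet}(X^{\an}, \Omega^{\bullet}_{X^{\an}}(\mathcal{V}^{\an})).$$
This reduces the problem to comparing the analytic twisted de Rham hypercohomology on $X^{\an}$ with the algebraic one on $X$.

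Second, to move from $X$ to the compactification $\overline{X}$, I would invoke Deligne's canonical extension. The regular singularity hypothesis lets us extend $(\mathcal{V}^{\an},\nabla)$ uniquely to a holomorphic vector bundle $(\overline{\mathcal{V}}^{\an}, \overline{\nabla})$ on $\overline{X}^{\an}$ whose connection has at worst logarithmic poles along $C$ and whose residues have eigenvalues in a fixed fundamental domain for $\mathbb{C}/\mathbb{Z}$. This extension descends to an algebraic bundle $\overline{\mathcal{V}}$ on $\overline{X}$, and one forms the twisted logarithmic de Rham complex $\Omega^{\bullet}_{\overline{X}}(\log C)(\overline{\mathcal{V}})$. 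The key technical input of Deligne is that the inclusion
$$\Omega^{\bullet}_{\overline{X}^{\an}}(\log C)(\overline{\mathcal{V}}^{\an}) \hookrightarrow Rj^{\an}_* \Omega^{\bullet}_{X^{\an}}(\mathcal{V}^{\an})$$
is a quasi-isomorphism on $\overline{X}^{\an}$, where $j : X \hookrightarrow \overline{X}$; the parallel statement holds on the algebraic side for $j_*\Omega^{\bullet}_X(\mathcal{V})$.

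Third, since $\overline{X}$ is smooth projective and $\Omega^p_{\overline{X}}(\log C)\otimes \overline{\mathcal{V}}$ is coherent, Serre's GAGA gives
$$\mathbb{H}^{\bullet}(\overline{X}, \Omega^{\bullet}_{\overline{X}}(\log C)(\overline{\mathcal{V}}))\otimes_\kk \mathbb{C} \cong \mathbb{H}^{\bullet}(\overline{X}^{\an}, \Omega^{\bullet}_{\overline{X}^{\an}}(\log C)(\overline{\mathcal{V}}^{\an})).$$
Concatenating with the analytic Poincar\'e lemma, the log-quasi-isomorphism, and the fact that the algebraic log complex on $\overline{X}$ computes $H^{\bullet}_{\DR}(X,\mathcal{V})$, the theorem falls out.

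The main obstacle is step two, i.e.\ showing that the logarithmic subcomplex computes the same hypercohomology as the full meromorphic de Rham complex $Rj_*\Omega^{\bullet}_X(\mathcal{V})$. This is purely local near a boundary point where $C$ has the form $z_1\cdots z_k = 0$, and it amounts to an inductive pole-reduction argument: any meromorphic section of $\Omega^p \otimes \mathcal{V}$ with higher-order poles is shown to be equivalent, modulo an exact form and a log-pole section, to one with strictly smaller pole order. The regular singularity hypothesis is used precisely here to ensure that the eigenvalues of the residues do not obstruct the reduction (no non-invertibility of residue-shifted operators), which is exactly why the chosen fundamental domain normalization of the canonical extension is imposed. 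Once this local statement is in hand, the rest of the proof is a formal assembly.
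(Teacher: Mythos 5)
Your outline is correct and reproduces exactly the argument the paper relies on: the paper gives no independent proof of Theorem \ref{DRstr} but cites Deligne \cite[Cor. 6.3]{deligne:ode}, whose proof is precisely the three-step scheme you describe (twisted holomorphic Poincar\'e lemma, comparison with the logarithmic de Rham complex of the canonical extension via the residue-eigenvalue pole-reduction argument, then GAGA on $\overline{X}$). The only housekeeping point worth making explicit is that one should base-change the algebraic side from $\kk$ to $\C$ at the outset (flat base change for hypercohomology of coherent complexes), since the canonical extension normalized by residue eigenvalues in a fundamental domain of $\C/\Z$ lives naturally over $\C$.
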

\begin{remark}[Affine case]\label{affine}
When $X$ is affine, the de Rham structure $H^{\bullet}_\DR(X,\mathcal{V})$ can be computed as the cohomology $H^{\bullet}(\Gamma(X,\Omega_X^{\bullet}(\mathcal V)))$ of global sections of the twisted de Rham complex with differential given by the connection $\nabla$. 
\end{remark}
\begin{remark}\label{log}
One can replace $\Omega_X^{\bullet}$ by any complex that is quasi-isomorphic to $\Omega_X^{\bullet}$ or the direct image sheaf complex $i_*\Omega_X^{\bullet}$ (for the inclusion $i:X\xhookrightarrow{}\overline{X}$), for example the logarithmic de Rham complex $\Omega_{\overline{X}}^{\bullet}(\log C)$. Then the de Rham structure $H^{\bullet}_\DR(X,\mathcal{V})$ can be computed as the hypercohomology
$$\H^{\bullet}(\Xbar,\Omega_\Xbar^{\bullet}(\log C)\otimes_{\OO_{\Xbar}}\Vbar)$$
of the twisted logarithmic de Rham complex $$\Omega_\Xbar^{\bullet}(\log C)\otimes_{\OO_{\Xbar}}\Vbar,$$
where $\Vbar$ denotes the canonical extension of $\cV$ to $\Xbar$.
\end{remark}

\begin{example}{\bf $\Q$-de Rham structure $H^1_\DR(Y, S^{2n}\cH)$ on $H^1(Y^\an, S^{2n}\mathbb H)$.} Apply the above discussion to $\kk=\Q$, $X=Y$, $\Xbar=\Ybar$, and let $\cV$ be the bundle $S^{2n}\cH$ over $Y$ defined in Section \ref{gaussmanin}. Recall that we denoted by $\Vbar=S^{2n}\Hbar$ its canonical extension to $\Ybar$. Then the $\Q$-de Rham structure
$$H^1_\DR(X,\cV)=H^1_\DR(Y, S^{2n}\cH)$$
on $H^1(Y^\an, S^{2n}\H)$ can be computed by the hypercohomology
$$\H^1(\Ybar,\Omega^\bullet_{\Ybar}(\log D)\otimes S^{2n}\Hbar)$$
where $D$ is the discriminant locus defined in Section \ref{basic}, and the differential of the twisted de Rham complex is induced by the Gauss--Manin connection $\nabla_0$ on $\Hbar$ over $\Ybar$ given explicitly by (\ref{nabla0}) in Section \ref{gaussmanin}.
\end{example}

To find a $\Q$-de Rham structure $H^1_\DR(\M_{1,1}, S^{2n}\cH)$ on $H^1(\mathcal M_{1,1}^{\an}, S^{2n}\mathbb H)$, we recall the arguments in Brown--Hain \cite[\S 3]{brown-hain}. Since the differential $\nabla_0$ is $\G_m$-invariant, we obtain the subcomplex
$$\F_{2n}^\bullet:=(\Omega^\bullet_{\Ybar}(\log D)\otimes S^{2n}\Hbar)^{\G_m}$$ of $\G_m$-invariant forms on $\Ybar$, which descends to a complex $\F_{2n}^\bullet$ on $\Mbar_{1,1}$. Since $\G_m$ is connected, this also computes $H^1_\DR(Y,S^{2n}\cH)$. By computing the Leray spectral sequence for the $\G_m$-principal bundle $p:Y\to\M_{1,1}$, one gets natural isomorphisms
$$p^*:H^1_\DR(\M_{1,1},S^{2n}\cH)\xrightarrow{\simeq}H^1_\DR(Y,S^{2n}\cH)$$
for $n>0$. 

\begin{example}{\bf $\Q$-de Rham structure $H^1_\DR(\M_{1,1}, S^{2n}\cH)$ on $H^1(\mathcal M_{1,1}^\an, S^{2n}\mathbb H)$.}\label{prieg}
Assume $n>0$. Let  $X=\M_{1,1}$, $\Xbar=\Mbar_{1,1}$, and their coverings
$Y=\M_{1,\vec{1}}=\A^2-D$, $\Ybar=\A^2-\{(0,0)\}$. Let $C$ denote the cusp in $\Mbar_{1,1}$, then 
$$\Xbar=\G_m\dbs\Ybar,\quad X=\Xbar-C=\G_m\dbs (\Ybar-D)=\G_m\dbs Y,$$ where the multiplicative group $\G_m$ acts on $Y$ and $\Ybar$ as before in Section \ref{basic}.

Let $\cV$ and $\Vbar$ be the bundles $S^{2n}\cH$ and  $S^{2n}\Hbar$ over $\M_{1,1}$ defined in Section \ref{gaussmanin}. 
The $\Q$-de Rham structure
$$H^1_\DR(X,\cV)=H^1_\DR(\M_{1,1}, S^{2n}\cH)$$
on $H^1(\M_{1,1}^\an, S^{2n}\H)$, by what we just discussed, can be computed as the hypercohomology $\H^1(\Mbar_{1,1},\F_{2n}^\bullet)$ of the twisted logarithmic de Rham complex
$$
\F_{2n}^\bullet=(\Omega^\bullet_{\Ybar}(\log D)\otimes S^{2n}\Hbar)^{\G_m},
$$
whose differential is again induced by the Gauss--Manin connection $\nabla_0$.
\end{example}
\begin{remark}\label{sQstr}
As the complexes $\Omega^\bullet_\Ybar(\log D)$ and $\Omega^\bullet_Y$ are quasi-isomorphic, their induced $\Q$-de Rham structures, $H^1_\DR(\M_{1,1},S^{2n}\cH)$ by us and $H^1_\DR(\M_{1,1},\mathscr{V}_{2n})$ by Brown--Hain \cite{brown-hain}, are the same. In particular, by Thm 1.2 in \cite{brown-hain}, this $\Q$-de Rham structure is equipped with the action of Hecke operators induced from \cite{guerzhoy}.
\end{remark}

\subsection{Sections of sheaves $\F^p_{2n}$ in the twisted de Rham complex $\F^\bullet_{2n}$}

Here we prepare ourselves for explicit computations later by writing down sections of sheaves $\F^p_{2n}$ over open sets $U_0$, $U_1$ and their intersection $U_{01}$.

First, we compute global sections of sheaves $\Omega^p_{\Ybar}(\log D)$ in the logarithmic de Rham complex. By applying Deligne's criterion (\cite[\S 3.1]{deligne:h2}) for being a global section, we have
\begin{lemma}\label{lgcplx}
$$
\Gamma\big(\Ybar,\Omega^p_{\Ybar}(\log D)\big)=
\begin{cases}
\OO(\Ybar) & \quad\quad p=0,\\
\OO(\Ybar)\frac{\alpha}{\Delta}\bigoplus\OO(\Ybar)\frac{d\Delta}{\Delta} & \quad\quad p=1,\\
\OO(\Ybar)\frac{du\wedge dv}{\Delta}& \quad\quad p=2,\\
0 & \quad\quad \text{otherwise.}
\end{cases}
$$
\end{lemma}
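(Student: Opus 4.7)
The plan is to apply Deligne's characterization of logarithmic forms directly: a rational $p$-form $\omega$ is a section of $\Omega^p_\Ybar(\log D)$ iff both $\omega$ and $d\omega$ have at worst simple poles along $D$. Setup observations: since $\Ybar = \A^2 \setminus \{(0,0)\}$ and the deleted point has codimension two, Hartogs gives $\OO(\Ybar) = \Q[u,v]$; and since the only singular point of $D = \{\Delta = 0\}$ is the origin, which has been removed, $D \subset \Ybar$ is smooth and $\Delta$ is a global defining equation.

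The cases $p = 0$ and $p \ge 3$ are immediate from $\dim \Ybar = 2$: a log $0$-form is just a regular function. For $p = 2$, any 2-form on a surface is automatically $d$-closed, so being log reduces to having simple poles along $D$, which produces $\Q[u,v] \cdot du \wedge dv / \Delta$.

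The substantive case is $p = 1$. First, any global 1-form on $\Ybar$ with simple poles along $D$ can be written as $\omega = (A\,du + B\,dv)/\Delta$ with $A,B \in \Q[u,v]$. A direct calculation of $d\omega$ shows that the log condition is equivalent to the polynomial divisibility
\[
\Delta \mid u^2 B + 18 v A
\]
in $\Q[u,v]$. I would then check that both proposed generators $d\Delta/\Delta$ and $\alpha/\Delta$ satisfy this on the nose (in fact $u^2 B + 18 v A$ equals $0$ and $2\Delta$ respectively), and that $\OO(\Ybar)$-multiples of log forms remain log via the Leibniz rule applied to $d(f\omega) = df \wedge \omega + f\,d\omega$.

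To show these generators span, I would solve the $\OO(\Ybar)$-linear system $A = 3u^2 f - 3v g$, $B = -54 v f + 2 u g$ for $(f,g)$; its determinant is exactly $6\Delta$, so Cramer's rule yields
\[
f = \frac{2uA + 3vB}{6\Delta}, \qquad g = \frac{u^2 B + 18 v A}{2\Delta}.
\]
The numerator of $g$ is divisible by $\Delta$ by the log condition itself, so $g \in \Q[u,v]$. The main (mild) obstacle is the regularity of $f$: one needs $\Delta \mid 2uA + 3vB$, which is not manifestly the same as the log condition. My plan is to run Cramer's rule in the reverse direction, solving for $A,B$ in terms of $\Delta h := u^2 B + 18 v A$ and $k := 2uA + 3vB$; polynomiality of $A$ then forces $\Delta \mid u^2 k$, and coprimality of $\Delta$ with $u^2$ in $\Q[u,v]$ (since $\Delta \equiv -27 v^2 \pmod u$) yields $\Delta \mid k$. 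Finally, the sum is automatically direct because the same determinant $6\Delta$ is a nonzerodivisor in $\Q[u,v]$.
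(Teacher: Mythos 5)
Your proposal is correct and follows essentially the same route as the paper: both apply Deligne's criterion ($\Delta\cdot\omega$ and $\Delta\cdot d\omega$ regular), reduce the $p=1$ case to the divisibility $\Delta \mid u^2B + 18vA$ (the paper's $\Delta \mid 3u^2 g + 54vf$ with $(f,g)=(A,B)$), and exhibit $\alpha/\Delta$ and $d\Delta/\Delta$ as the generators. The only difference is the final spanning step, where the paper subtracts $\OO(\Ybar)$-multiples of the generators to reduce to the case of constant $g$, while you invert the $2\times 2$ system by Cramer's rule and use coprimality of $\Delta$ with $u$; both work, and your version has the small bonus of making the directness of the sum immediate from the nonvanishing of the determinant $6\Delta$.
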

\begin{proof}
In our case, a section $\varphi$ is in the logarithmic complex if and only if $\Delta\cdot\varphi$ and $\Delta\cdot d\varphi$ are holomorphic on $\Ybar$. The only interesting case is when $p=1$. Assume that $\varphi$ is a 1-form in the logarithmic complex, then $\phi:=\Delta\cdot\varphi$ is holomorphic, we can write it as
$$\phi=fdu+gdv\in\OO(\Ybar) du\oplus\OO(\Ybar) dv,$$
where $f,g$ are polynomials in $u$ and $v$ as $\OO(\Ybar)=\Q[u,v]$. Since
$$
d\varphi=d\left(\frac{\phi}{\Delta}\right)=\frac{d\phi}{\Delta}+\phi\wedge\frac{d\Delta}{\Delta^2},
$$
to make sure that $\Delta\cdot d\varphi$ is holomorphic, the 2-form
$$
\phi\wedge d\Delta=(fdu+gdv)\wedge(3u^2du-54vdv)=-(3u^2g+54vf)du\wedge dv
$$
has to be a multiple of $\Delta$, i.e. we need to have
$$
(u^3-27v^2)\big|(3u^2g+54vf).
$$
We call $(f,g)$ a solution if it satisfies the above condition. One can check that $(f,g)=(-3v,2u)$ and $(f,g)=(3u^2,-54v)$ are solutions. We will show that $\phi=fdu+gdv$ has to be an $\OO(\Ybar)$-linear combination of these two given solutions, i.e.
$$
-3vdu+2udv\ (=\alpha)\quad\text{and}\quad 3u^2du-54vdv\ (=d\Delta),
$$
so that
$$
\varphi=\frac{\phi}{\Delta}\in\OO(\Ybar)\frac{\alpha}{\Delta}\oplus\OO(\Ybar)\frac{d\Delta}{\Delta}.
$$

By substracting a suitable $\OO(\Ybar)$-linear combination of the given solutions from $(f,g)$, one can assume that $g$ is a constant. From $\Delta\big|(3u^2g+54vf)$, we see that $g=0$, and that $f$ is a multiple of $\Delta$. But 
$$(\Delta,0)=9v\cdot(-3v,2u)+\frac u3\cdot(3u^2,-54v)$$ 
is already an $\OO(\Ybar)$-linear combination of the given solutions, so we are done.
\end{proof}
%
Our next task is to apply $\rmC$ech--de Rham theory to compute $\Q$-de Rham representatives of $H^1_\DR(\M_{1,1},S^{2n}\cH)$. In preparation, we compute the $\G_m$-invariant sections on our open cover.

Since everything is graded by $\G_m$, denote the degree $n$ part of $\OO(\Ybar)$ by $\gr_n\OO(\Ybar)$. It is easy to deduce from Lemma \ref{lgcplx} that by weight computations, global sections of $\F_{2n}^p$ are
\begin{align*}
\F^p_{2n}(\Mbar_{1,1})&=\Gamma\big(\Ybar,(\Omega^p_{\Ybar}(\log D)\otimes S^{2n}\Hbar)^{\G_m}\big)
\\&=
\begin{cases}
\bigoplus\limits_{\ms s+t=2n}(\gr_{t-s}\OO(\Ybar))\Ss^s\T^t & \quad\quad p=0,\\
\big(\bigoplus\limits_{\ms s+t=2n}(\gr_{t-s+2}\OO(\Ybar))\frac{\alpha}{\Delta}\Ss^s\T^t\big)\bigoplus \big(\bigoplus\limits_{\ms s+t=2n}(\gr_{t-s}\OO(\Ybar))\frac{d\Delta}{\Delta}\Ss^s\T^t\big) & \quad\quad p=1,\\
\bigoplus\limits_{\ms s+t=2n} (\gr_{t-s+2}\OO(\Ybar))\frac{du\wedge dv}{\Delta}\Ss^s\T^t & \quad\quad p=2,\\
0 & \quad\quad \text{otherwise.}
\end{cases}
\end{align*}
Note that for $i=0,1$, we have $\OO_\Ybar(Y_i)=\OO({Y_i})$. They are graded and sections of $\F^p_{2n}$ on $U_i$ are
\begin{align*}
\F^p_{2n}(U_i)&=\Gamma\big(Y_i,(\Omega^p_{\Ybar}(\log D)\otimes S^{2n}\Hbar)^{\G_m}\big)
\\&=
\begin{cases}
\bigoplus\limits_{\ms s+t=2n}(\gr_{t-s}\OO({Y_i}))\Ss^s\T^t & \quad\quad p=0,\\
\big(\bigoplus\limits_{\ms s+t=2n}(\gr_{t-s+2}\OO({Y_i}))\frac{\alpha}{\Delta}\Ss^s\T^t\big)\bigoplus \big(\bigoplus\limits_{\ms s+t=2n}(\gr_{t-s}\OO({Y_i}))\frac{d\Delta}{\Delta}\Ss^s\T^t\big) & \quad\quad p=1,\\
\bigoplus\limits_{\ms s+t=2n}(\gr_{t-s+2}\OO({Y_i}))\frac{du\wedge dv}{\Delta}\Ss^s\T^t & \quad\quad p=2,\\
0 & \quad\quad \text{otherwise.}
\end{cases}
\end{align*}
Let $Y_{01}:=Y_0\cap Y_1=\Spec \Q[u,v,u^{-1},v^{-1}]$, then the coordinate ring $\OO({Y_{01}})=\Q[u,v,u^{-1},v^{-1}]$ is graded with $u$, $v$, $u^{-1}$, $v^{-1}$ having weights $4$, $6$, $-4$, $-6$, respectively.  Since the $\G_m$-action on $\Ybar$ restricts to $Y_{01}$, and the stack quotient of this action is $U_{01}:=\G_m\dbs Y_{01}$, we deduce similarly that
\begin{align*}
\F^p_{2n}(U_{01})&=\Gamma\big(Y_{01},(\Omega^p_{\Ybar}(\log D)\otimes S^{2n}\Hbar)^{\G_m}\big)
\\&=
\begin{cases}
\bigoplus\limits_{\ms s+t=2n}(\gr_{t-s}\OO({Y_{01}}))\Ss^s\T^t & \quad\quad p=0,\\
\big(\bigoplus\limits_{\ms s+t=2n}(\gr_{t-s+2}\OO({Y_{01}}))\frac{\alpha}{\Delta}\Ss^s\T^t\big)\bigoplus \big(\bigoplus\limits_{\ms s+t=2n}(\gr_{t-s}\OO({Y_{01}}))\frac{d\Delta}{\Delta}\Ss^s\T^t\big) & \quad\quad p=1,\\
\bigoplus\limits_{\ms s+t=2n} (\gr_{t-s+2}\OO({Y_{01}}))\frac{du\wedge dv}{\Delta}\Ss^s\T^t & \quad\quad p=2,\\
0 & \quad\quad \text{otherwise.}
\end{cases}
\end{align*}

\subsection{The $\rmC$ech--de Rham complex $\cC^\bullet(\U,\F_{2n}^\bullet)$}\label{cech}
In this subsection, we construct a $\rmC$ech--de Rham complex $\cC^\bullet(\U,\F_{2n}^\bullet)$ that computes the $\Q$-de Rham structure $H^1_\DR(\M_{1,1}, S^{2n}\cH)$ for the open cover $\U=\{U_0,U_1\}$:
$$
\begin{array}{|ccc}
\F_{2n}^2(U_0)\oplus\F_{2n}^2(U_1) & \xrightarrow{\delta} & \F_{2n}^2(U_{01})\\
\text{\tiny $(\nabla_0,\nabla_0)$}\uparrow & & \text{\tiny $-\nabla_0$}\uparrow \\
\F_{2n}^1(U_0)\oplus\F_{2n}^1(U_1) & \xrightarrow{\delta} & \F_{2n}^1(U_{01})\\
\text{\tiny $(\nabla_0,\nabla_0)$}\uparrow & & \text{\tiny $-\nabla_0$}\uparrow \\
\F_{2n}^0(U_0)\oplus\F_{2n}^0(U_1) & \xrightarrow{\delta} & \F_{2n}^0(U_{01})\\
\hline 
\end{array}
$$
where $U_{01}$ is the intersection of $U_0$ and $U_1$; the horizontal differential $\delta$ is the usual one for a $\rmC$ech complex, and the vertical differential is $\nabla_0$ in the twisted de Rham complex $\F_{2n}^\bullet$.
Let
$$D=\delta+(-1)^q\nabla_0$$
be the (total) differential of the single complex $s\cC^\bullet(\U,\F_{2n}^\bullet)$ associated to the $\rmC$ech--de Rham double complex $\cC^\bullet(\U,\F_{2n}^\bullet)$. 

Recall from Example \ref{prieg} that the $\Q$-structure $H^1_\DR(\M_{1,1},S^{2n}\cH)$ is computed by the hypercohomology $\H^1(\Mbar_{1,1},\F_{2n}^\bullet)$, where $\F_{2n}^\bullet=(\Omega^\bullet_{\Ybar}(\log D)\otimes S^{2n}\Hbar)^{\G_m}$. Since $\F_{2n}^p$ is coherent on $\Mbar_{1,1}$ (see \cite{steenbrink}, or \cite{dolgachev} for an algebraic proof), and the open cover $\mathfrak U=\{U_0,U_1\}$ is a ``good cover"\footnote{In the sense of Bott and Tu \cite[\S 8]{bott-tu} that the (augmented) columns are exact in the $\rmC$ech--de Rham complex.}, the $\rmC$ech cohomology calculated for this open cover $\U$ will give us the correct answer, and we have
$$
H^1_\DR(\M_{1,1},S^{2n}\cH)=\H^1(\Mbar_{1,1},\F_{2n}^\bullet)=H^1(s\cC^\bullet(\U,\F_{2n}^\bullet)).
$$
\begin{remark}\label{global}
One can compute the cohomology $H^1(\Mbar_{1,1},\F_{2n}^\bullet)$ of global sections of $\F_{2n}^\bullet$ (cf. Remark \ref{affine}). One finds that its dimension equals that of $M_{2n+2}$, the $\Q$ vector space spanned by \emph{holomorphic} modular forms of weight $2n+2$ with rational Fourier coefficients (Section \ref{cmf}). It does not equal to the dimension of $H^1(\M_{1,1}^{\an}, S^{2n}\H)$. This is expected since $\Mbar_{1,1}$ is projective, and one needs to use hypercohomology of $\Ybar$ to compute $H^1_\DR(\M_{1,1},S^{2n}\cH)$ if one wants the correct result.
\end{remark}

\section{Holomorphic Modular Forms and Modular Forms of the Second Kind}\label{cmfano}
In this section, we apply the algebraic de Rham theory previously developed, and find explicitly all $\Q$-de Rham classes in the $\Q$-de Rham structure $H^1_\DR(\M_{1,1},S^{2n}\cH)$ on $H^1(\M_{1,1}^{\an}, S^{2n}\H)$ for every positive $n$. These $\Q$-de Rham classes are closely related to \emph{holomorphic} modular forms with rational Fourier coefficients. 

By Eichler--Shimura \cite{eichler,shimura} isomorphism and Zucker \cite{zucker}, after tensoring $H^1_\DR(\M_{1,1},S^{2n}\cH)$ with $\C$, one obtains a natural mixed Hodge structure on $H^1(\M_{1,1}^{\an}, S^{2n}\H)$, which has weight and Hodge filtrations defined over $\Q$:
\begin{align}
W_{2n+1}H^1(\M_{1,1}^{\an}, S^{2n}\H)&=H^1_{\cusp}(\M_{1,1}^{\an}, S^{2n}\H);\label{a}\\
W_{4n+2}H^1(\M_{1,1}^{\an}, S^{2n}\H)&=H^1(\M_{1,1}^{\an}, S^{2n}\H),\label{b}\\
F^{2n+1}H^1(\M_{1,1}^{\an}, S^{2n}\H)&\cong M_{2n+2}\otimes_\Q\C,\label{c}
\end{align}
where $M_{2n}$ denotes the $\Q$-vector space spanned by \emph{holomorphic} modular forms of weight $2n$ with rational Fourier coefficients.

The $\Q$-structure for the last part $F^{2n+1}H^1(\mathcal M_{1,1}^{\an}, S^{2n}\mathbb H)$ -- \emph{holomorphic} modular forms -- is well known. Every $\mathbb Q$-de Rham cohomology class in $F^{2n+1}H^1(\mathcal M_{1,1}^{\an}, S^{2n}\mathbb H)$ can be represented by a {\em global} $\mathbb G_m$-invariant $\nabla_0$-closed 1-form on $Y$ with coefficients in $S^{2n}\mathcal H$. The explicit correspondence has been found in \cite[\S 21]{kzb}, we record it here in our notation in the following section \ref{cmf}. 

The remaining $\Q$-de Rham classes are the ones, under the Eichler--Shimura isomorphism, that involve anti-holomorphic cusp forms. 
In Section \ref{nmf}, we explain how to find and represent all $\Q$-de Rham classes including these remaining classes using $\rmC$ech cocycles in the $\rmC$ech--de Rham complex $\cC^\bullet(\U,\F_{2n}^\bullet)$. The representatives will be called {\it modular forms of the second kind}. Similar results were obtained by Scholl--Kazalicki \cite{scholl-kazalicki} and Brown--Hain \cite{brown-hain} using weakly modular forms as representatives. Our representatives have the advantage of having logarithmic singularities at the cusp. They are better suited to constructing regularized iterated integrals (Section \ref{dbint}).

\subsection{Holomorphic modular forms}\label{cmf}
Analytically, define a map 
\begin{equation}\label{psi}
\psi: \h\to Y^\an
\end{equation} 
by 
$$\qquad\tau\mapsto (u=g_2(\tau),v=g_3(\tau))$$
where $g_2(\tau)=20\Eis_4(\tau)$, $g_3(\tau)=\frac 73\Eis_6(\tau)$, using Zagier's normalization for Eisenstein series and his notation $\Eis_4(\tau)$, $\Eis_6(\tau)$ \cite{zagier}. This map induces an isomorphism
$$\SL_2(\Z)\dbs\h\to\G_m\dbs Y^\an=\M_{1,1}^\an.$$

Given a \emph{holomorphic} modular form $f(\tau)$ of weight $2n+2$ with rational Fourier coefficients, it can be written as a polynomial $h$ in $g_2(\tau)$ and $g_3(\tau)$. From Hain \cite[\S 21]{kzb}, the cohomology class corresponding to $f(\tau)$ can be represented by a {\em global} 1-form
\begin{equation}\label{poly}
h(u,v)\frac{\alpha}{\Delta}\T^{2n}=h(u,v)\frac{2udv-3vdu}{u^3-27v^2}\T^{2n}\in\left(\Omega^1_{\Ybar}(\log D)\otimes F^{2n}S^{2n}\cH\right)^{\mathbb G_m}.
\end{equation}
 The pullback of this form along $\psi$ is, up to a rational multiple ($\frac 23$ to be precise, as the pullback of $\frac{\alpha}{\Delta}$ is $\frac 23\frac{dq}{q}$),
\begin{equation}\label{omegaf}
 h(g_2(\tau),g_3(\tau))\T^{2n}\frac{dq}{q}=2\pi i f(\tau)\T^{2n}d\tau, \quad \text{with }q=e^{2\pi i \tau}.
\end{equation}
We will denote the 1-form on $Y$ in (\ref{poly}) by $\omega_f$, then the 1-form on the upper half plane $\h$ in (\ref{omegaf}) is actually $\frac 32\psi^*\omega_f$. It will be used as the integrand of Eichler integral to compute periods, cf. (\ref{ei}).\footnote{The 1-form in (\ref{omegaf}) was denoted by $\omega_f$ in Hain--Matsumoto \cite[\S 9.1]{umem}, and by $\underline{f}(\tau)$ in Brown \cite[\S 2.1]{brown:mmm} to compute periods. }

\begin{example}\label{deltaform}
The Ramanujan cusp form $\Delta=q-24q^2+252q^3+\cdots$ of weight 12 corresponds to a rational polynomial $\Delta=u^3-27v^2$. So its corresponding class is represented by the 1-form
$$\omega_\Delta=\Delta\frac{2udv-3vdu}{u^3-27v^2}\T^{10}=(2udv-3vdu)\T^{10}.$$
\end{example}

\subsection{Modular forms of the second kind}\label{nmf}
In this subsection, we will find all $\Q$-de Rham classes in $H^1_\DR(\M_{1,1},S^{2n}\cH)$ as promised. Their representatives are {\it modular forms of the second kind}.
\begin{definition}\label{mfsk}
{\it Modular forms of the second kind} are 1-cocycles in the single complex $s\cC^\bullet(\U,\F_{2n}^\bullet)$ associated to the $\rmC$ech--de Rham double complex $\cC^\bullet(\U,\F_{2n}^\bullet)$. 
\end{definition}

We start by discussing what this definition means. In the $\rmC$ech--de Rham complex $\cC^\bullet(\U,\F_{2n}^\bullet)$, every 1-cochain $\wt{\omega}$ is of the form
$$\wt{\omega}=\begin{array}{|cc}
(\omega^{(0)},\omega^{(1)}) &0\\
0& l\\
\hline
\end{array}
$$
where $\omega^{(i)}\in\F_{2n}^1(U_i)$, and $l\in\F_{2n}^0(U_{01})$, so that $(\omega^{(0)},\omega^{(1)})\in\check{C}^0(\U,\F_{2n}^1)$ and $l\in\cC^1(\U,\F_{2n}^0)$. We often simply write it as
$\wt\omega=( \omega^{(0)},\omega^{(1)}; l)$ in this section.

A 1-cochain $\wt\omega=( \omega^{(0)},\omega^{(1)}; l)$ in $s\cC^\bullet(\U,\F_{2n}^\bullet)$ is a 1-cocycle whenever $D\wt\omega=0$; in other words, it is a cocycle if and only if $\nabla_0\omega^{(0)}=\nabla_0\omega^{(1)}=0$ and $$\delta(\omega^{(0)},\omega^{(1)})-\nabla_0 l=\omega^{(1)}-\omega^{(0)}-\nabla_0 l=0.$$

\begin{remark}\label{ginter}
One can view a modular form of the second kind $\wt\omega=( \omega^{(0)},\omega^{(1)}; l)$ geometrically as follows: it consists of closed 1-forms $\omega^{(0)}$ and $\omega^{(1)}$ on opens $U_0$ and $U_1$ respectively, and a function $l$ on the intersection $U_{01}$ that can patch these two 1-forms together (cf. Section \ref{inter}). 
\end{remark}

\begin{example}{\bf Holomorphic modular forms as modular forms of the second kind.}\label{cocycle}
As was shown in the last section, a \emph{holomorphic} modular form $f$ with rational Fourier coefficients of weight $2n+2$ gives rise to a cohomology class $[\omega_f]\in H^1_\DR(\M_{1,1},S^{2n}\cH)$, which can be represented by a {\em global} closed form $\omega_f$. Denote by $\omega_f^{(i)}$ the restriction of $\omega_f$ to $U_i$, $i=0,1$, and define
$$\wt\omega_f:=(\omega_f^{(0)},\omega_f^{(1)};0).$$
Then $\wt\omega_f$ is a 1-cocycle in our complex $s\cC^\bullet(\U,\F_{2n}^\bullet)$, i.e. a modular form of the second kind.
\end{example}

To find representatives for all $\Q$-de Rham classes in $H^1_\DR(\M_{1,1}, S^{2n}\cH)$, we use the second spectral sequence of the double complex $\cC^\bullet(\U,\F_{2n}^\bullet)$, with the second page $E_2$ obtained by successively computing homologies under the horizontal and vertical differentials 
$$E_2^{p,q}=H^q_{\text{\tiny $\nabla_0$}}(H^p_{\delta}(\cC^\bullet(\U,\F_{2n}^\bullet))).$$ 
Since the double complex $\cC^\bullet(\U,\F_{2n}^\bullet)$ concentrates in the first two column, this spectral sequence degenerates at $E_3$ trivially, i.e. $E_3=E_4=\cdots=E_\infty$.

We now use a spectral sequence zig-zag argument. Starting from $l$ in the lower right corner, it extends to a 1-cocycle $\wt\omega=( \omega^{(0)},\omega^{(1)}; l)$ if it represents a cohomology class in $E_3$:
$$
\begin{array}{|c}
\begin{tikzcd}
(0,0) & \\
(\omega^{(0)},\omega^{(1)}) \arrow{r}{\delta} \arrow{u}{\text{\tiny $(\nabla_0,\nabla_0)$}} & \phantom{0}\\
 & l \arrow{u}{\text{\tiny $-\nabla_0$}}
\end{tikzcd}\\
\hline
\end{array}
$$
i.e. $\nabla_0\omega^{(0)}=\nabla_0\omega^{(1)}=0$ and $\omega^{(1)}-\omega^{(0)}-\nabla_0 l=0$. By the degeneracy at $E_3$ of the spectral sequence, this $\wt\omega$ represents a cohomology class in $s\cC^\bullet(\U,\F_{2n}^\bullet)$ and the condition above is equivalent to it being closed: $D\wt\omega=0$. 

Given a trivial class $0\in H^1_\delta(\U,\F_{2n}^0)$, we can always choose $l=0$ to represent it. Then by the conditions above, we would have a {\em global} closed form $\omega$, which brings us back to the previous example. In fact, when $n<5$, the corresponding group $H^1_\delta(\U,\F_{2n}^0)$ is trivial. The first interesting case occurs at $n=5$, which is expected, since $H^1_\DR(\M_{1,1}, S^{10}\cH)$ corresponds to modular forms of weight $2n+2=12$, where a cusp form appears for the first time. 

\begin{example}{\bf First new $\Q$-de Rham class in $H^1_\DR(\M_{1,1}, S^{2n}\cH)$.}\label{fndrc}
When $n=5$, one can find a nonzero class in $H^1_\delta(\U,\F_{10}^0)$, cokernel of $\delta$ on the $0$-th row, represented by $\frac{1}{uv}\Ss^{10}\in\F_{10}^0(U_{01})$. The reason it is non-trivial is that $\frac{1}{uv}=u^{-1}v^{-1}$, having negative powers for both $u$ and $v$, cannot be the difference of two elements in $\Q[u,v][u^{-1}]$ and $\Q[u,v][v^{-1}]$ respectively, where every monomial has a nonnegative power of at least one variable.

To extend the nontrivial class $[\frac{1}{uv}\Ss^{10}]\in E_1^{1,0}$ through $E_2$, one needs to find $(\omega_{1,1}^{(0)},\omega_{1,1}^{(1)})$ such that 
$$\omega_{1,1}^{(1)}-\omega_{1,1}^{(0)}=\nabla_0 l_{1,1},$$
where $l_{1,1}$ represents the class $[\frac{1}{uv}\Ss^{10}]$. One can choose $l_{1,1}$ to be $\frac{1}{uv}\Ss^{10}$ in this case since
$$\nabla_0\left(\frac{1}{uv}\Ss^{10}\right)=-\left(\frac{9\alpha}{u^2\Delta}+\frac{u\alpha}{2v^2\Delta}\right)\Ss^{10}-\frac{5\alpha}{4v\Delta}\Ss^9\T$$
can be written as $\omega_{1,1}^{(1)}-\omega_{1,1}^{(0)}$ with $$\omega_{1,1}^{(0)}=\frac{9\alpha}{u^2\Delta}\Ss^{10}\in\F_{10}^1(U_0),$$ and $$\omega_{1,1}^{(1)}=-\frac{u\alpha}{2v^2\Delta}\Ss^{10}-\frac{5\alpha}{4v\Delta}\Ss^9\T\in\F_{10}^1(U_1).$$ One can easily compute and find that these two forms are $\nabla_0$-closed, so the cochain
$$\wt{\omega}_{1,1}=\begin{array}{|cc}
(\omega_{1,1}^{(0)},\omega_{1,1}^{(1)}) &0\\
0& l_{1,1}\\
\hline
\end{array}
$$
lives to $E_3$. Therefore, this cochain $\wt\omega_{1,1}$ is a modular form of the second kind and represents a class $[\wt\omega_{1,1}]$ in $H^1_\DR(\M_{1,1}, S^{10}\cH)$.
\end{example}

Using the same argument at the beginning of the above example, one easily gets
\begin{lemma}\label{span}
The cohomology group $H^1_\delta(\U,\F_{2n}^0)$ has a basis consists of classes $[\frac{1}{u^pv^q}\Ss^s\T^t]$ with positive integers $p$, $q$ such that $4p+6q\leq 2n$, and nonnegative integers $s=n+2p+3q$, $t=n-2p-3q$. 
\end{lemma}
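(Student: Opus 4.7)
The plan is to compute the cokernel of the \v Cech differential
$$\delta\colon \F^0_{2n}(U_0)\oplus \F^0_{2n}(U_1)\longrightarrow \F^0_{2n}(U_{01}),\qquad \delta(f_0,f_1)=f_1|_{U_{01}}-f_0|_{U_{01}},$$
directly, using the explicit descriptions of $\F^0_{2n}(U_i)$ and $\F^0_{2n}(U_{01})$ computed in the preceding subsection. Since $\F^0_{2n}$ decomposes as a direct sum over the basis $\{\Ss^s\T^t\}_{s+t=2n}$ of $S^{2n}\Hbar$, and $\delta$ respects this decomposition, it suffices to work one summand at a time. For fixed $(s,t)$ with $s+t=2n$, the restriction of $\delta$ is just
$$\gr_{t-s}\OO(Y_0)\oplus \gr_{t-s}\OO(Y_1)\longrightarrow \gr_{t-s}\OO(Y_{01}),$$
so the $\Ss^s\T^t$ component of $H^1_\delta(\U,\F_{2n}^0)$ equals
$$\gr_{t-s}\bigl(\OO(Y_{01})/(\OO(Y_0)+\OO(Y_1))\bigr).$$

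The central step is to identify this quotient as an ungraded vector space. Recalling $\OO(Y_0)=\Q[u,v,u^{-1}]$, $\OO(Y_1)=\Q[u,v,v^{-1}]$ and $\OO(Y_{01})=\Q[u,v,u^{-1},v^{-1}]$, I would pick monomial bases: a Laurent monomial $u^a v^b$ ($a,b\in\Z$) lies in $\OO(Y_0)$ iff $b\geq 0$ and in $\OO(Y_1)$ iff $a\geq 0$. Hence $\OO(Y_0)+\OO(Y_1)$ is spanned by those monomials with $a\geq 0$ \emph{or} $b\geq 0$, and a basis of the quotient is given by the monomials $u^{-p}v^{-q}=\tfrac{1}{u^p v^q}$ with $p,q\geq 1$.

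It remains to match the weight and the exponents $(s,t)$. The weight of $u^{-p}v^{-q}$ is $-4p-6q$, and to contribute to the $\Ss^s\T^t$ summand I need $-4p-6q=t-s$, i.e. $s-t=4p+6q$. Together with $s+t=2n$ this gives $s=n+2p+3q$ and $t=n-2p-3q$. The condition $t\geq 0$ (necessary for $\Ss^s\T^t$ to be a basis element of $S^{2n}H$) translates into $4p+6q\leq 2n$, while $s\geq 0$ is automatic since $p,q\geq 1$. Collecting the contributions over all $(s,t)$ yields the claimed basis.

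The proof involves no real obstacle; the only care needed is notational, namely keeping straight the weight grading coming from the $\G_m$-action (so that only the single graded piece $\gr_{t-s}$ enters for each $(s,t)$) and confirming that the obvious monomial-based splitting of $\OO(Y_{01})/(\OO(Y_0)+\OO(Y_1))$ is compatible with this grading, which it is because all three rings are graded and the natural inclusions are weight-preserving.
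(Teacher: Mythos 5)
Your proposal is correct and follows the same route as the paper, which likewise identifies the cokernel of $\delta$ with the span of the Laurent monomials $u^{-p}v^{-q}$ ($p,q\geq 1$) having negative exponents in both variables, and then pins down $(s,t)$ from the two conditions $s+t=2n$ and $s-t=4p+6q$ coming from $\G_m$-invariance. Your write-up simply makes explicit the monomial-basis argument that the paper leaves as a reference to Example \ref{fndrc} and the remark following the lemma.
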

\begin{remark}
Here $s$ and $t$ can be solved from $s+t=2n$ ($\Ss^s\T^t\in S^{2n}\cH$) and $s-t=4p+6q$ ($\frac{1}{u^pv^q}\Ss^s\T^t$ is $\G_m$-invariant, i.e. $\frac{1}{u^pv^q}\in\gr_{t-s}\OO({Y_{01}})$). 
\end{remark}
 
Since we already found all the \emph{holomorphic} modular forms in $H^1_\DR(\M_{1,1}, S^{2n}\cH)$, and based on Eichler--Shimura (\ref{a})--(\ref{c}), the remaining classes span a vector space of dimension equal to the one spanned by (anti)holomorphic cusp forms. Therefore, by Lemma \ref{dcl} below, if we were to find independent classes $[\wt\omega_{j,k}]$ in $H^1_\DR(\M_{1,1}, S^{2n}\cH)/F^{2n+1}$ for each positive integer pair $(j,k)$ with $4j+6k=2n$, we will have found all the remaining classes.

\begin{lemma}\label{dcl}
The number of positive integer pairs $(j,k)$ satisfying $4j+6k=2n$ is the same as the dimension of the space of cusp forms of weight $2n+2$.
\end{lemma}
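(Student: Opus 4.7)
The plan is to reduce the claim to the standard structure of the graded ring of holomorphic modular forms for $\SL_2(\Z)$.

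First, I would set up the bijection $(j,k)\mapsto(a,b)=(j-1,k-1)$, which identifies positive integer pairs with $4j+6k=2n$ with nonnegative integer pairs with $4a+6b=2n-10$. Since the ring of holomorphic modular forms with rational Fourier coefficients is $M_\bullet=\Q[g_2,g_3]$, where $g_2$ and $g_3$ have weights $4$ and $6$, the cardinality of the second set is exactly the weighted-monomial count $\dim_\Q M_{2n-10}$.

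Second, I would invoke multiplication by the Ramanujan discriminant $\Delta=g_2^3-27g_3^2\in M_{12}$, which gives an isomorphism $M_{2n-10}\xrightarrow{\sim}S_{2n+2}$ of $\Q$-vector spaces: $\Delta$ has a simple zero at the cusp and no other zeros on $\Mbar_{1,1}$, so division by $\Delta$ identifies cusp forms of weight $2n+2$ with arbitrary holomorphic modular forms of weight $2n-10$. Combined with the previous step, this yields
\[
\#\{(j,k)\in\Z_{>0}^2 : 4j+6k=2n\}=\dim_\Q M_{2n-10}=\dim_\Q S_{2n+2}.
\]

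The degenerate range $n<5$ must be checked separately: the set of positive pairs is empty since $4j+6k\geq 10$, and $S_{2n+2}=0$ since the first nonzero cusp form occurs in weight $12$, so both sides vanish. There is no real obstacle here; the content is simply recognizing that the weighted-monomial count in Lemma \ref{span} matches the Poincaré series of $\Q[g_2,g_3]$ shifted by the weight of $\Delta$.
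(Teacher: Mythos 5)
Your proof is correct and follows essentially the same route as the paper: both arguments identify $S_{2n+2}$ with $\Delta\cdot M_{2n-10}$ via the polynomial structure of the ring of modular forms in the weight $4$ and $6$ generators, and then use the shift $(j,k)=(a+1,b+1)$ to match monomial exponents with positive pairs. Your explicit check of the degenerate range $n<5$ is a small additional care the paper leaves implicit.
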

\begin{proof}
The dimension of cusp forms of weight $2n+2$ is the number of normalized Hecke eigen cusp forms of the same weight. We can choose a basis to be $\Delta\cdot u^av^b$ where $a$ and $b$ are nonnegative integers, $u$ and $v$ are normalized Hecke eigenform of weight 4 and 6 respectively as usual. It suffices to show a bijection between the set of integer pairs $(j,k)$ and the set of integer pairs $(a,b)$. Clearly the weight of modular forms provides us with the restriction $4a+6b+12=2n+2$, or equivalently $4(a+1)+6(b+1)=2n$. Therefore, $j=a+1$ and $k=b+1$ gives us the bijection we need.
\end{proof}

The following result explains how to find representatives of all the remaining $\Q$-de Rham classes. 
\begin{proposition}\label{ncf}
For any pair of positive integers $j,k$ such that $4j+6k=2n$, there is a class $[\wt\omega_{j,k}]$ in $H^1_\DR(\M_{1,1}, S^{2n}\cH)$ represented by a $\rmC$ech 1-cocycle
$$\wt{\omega}_{j,k}=\begin{array}{|cc}
(\omega_{j,k}^{(0)},\omega_{j,k}^{(1)}) &0\\
0& l_{j,k}\\
\hline
\end{array}
$$
where
$$l_{j,k}=\sum_{s+t=2n}x_{s,t}\Ss^s\T^t\in\F_{2n}^0(U_{01}),$$
with $x_{s,t}\in\gr_{t-s}\OO({Y_{01}})$ and $x_{2n,0}=\frac{1}{u^jv^k}$. In other words, we can choose $l_{j,k}$ to be a $\Q$-linear combination of terms $\frac{1}{u^pv^q}\Ss^s\T^t$ starting with a term $\frac{1}{u^jv^k}\Ss^{2n}$ so that $\nabla_0 l_{j,k}$ can be expressed as $\omega_{j,k}^{(1)}-\omega_{j,k}^{(0)}$, with both $\omega_{j,k}^{(0)}$ and $\omega_{j,k}^{(1)}$ being $\nabla_0$-closed.
\end{proposition}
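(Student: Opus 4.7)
The plan is to combine the spectral-sequence argument sketched just above the statement with an iterative construction in the style of Chen's method (Prop.\ \ref{procedure}). Since $\cC^p(\U,-)=0$ for $p\geq 2$, the horizontal-first spectral sequence of $\cC^\bullet(\U,\F^\bullet_{2n})$ has $E_\infty^{1,0}=E_2^{1,0}=\ker(d_1\colon E_1^{1,0}\to E_1^{1,1})$, and every class there lifts to one in the total $H^1=H^1_\DR(\M_{1,1},S^{2n}\cH)$. So it suffices to (i) build $l_{j,k}$ with leading term $\frac{1}{u^jv^k}\Ss^{2n}$ for which $[\nabla_0 l_{j,k}]=0$ in $H^1_\delta(\U,\F^1_{2n})$, i.e.\ $\nabla_0 l_{j,k}$ is a literal $\rmC$ech coboundary $\omega^{(1)}_{j,k}-\omega^{(0)}_{j,k}$; and (ii) adjust the splitting so that both 1-forms are $\nabla_0$-closed.

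For (i), I will construct the coefficients $x_{s,t}\in\gr_{t-s}\OO(Y_{01})$ of $l_{j,k}=\sum_{s+t=2n}x_{s,t}\Ss^s\T^t$ by descending induction on $s$, starting from $x_{2n,0}=\frac{1}{u^jv^k}$. The Leibniz rule together with $\nabla_0\Ss=\frac{1}{12}\frac{d\Delta}{\Delta}\Ss-\frac{u\alpha}{8\Delta}\T$ and $\nabla_0\T=-\frac{1}{12}\frac{d\Delta}{\Delta}\T+\frac{3\alpha}{2\Delta}\Ss$ shows that the $\Ss^{s'}\T^{t'}$-component of $\nabla_0 l_{j,k}$ is
\[
A_{s',t'}=dx_{s',t'}+\tfrac{s'-t'}{12}\tfrac{d\Delta}{\Delta}x_{s',t'}-\tfrac{(s'+1)u\alpha}{8\Delta}x_{s'+1,t'-1}+\tfrac{3(t'+1)\alpha}{2\Delta}x_{s'-1,t'+1}.
\]
The to-be-chosen coefficient $x_{s,t}$ enters $A_{s+1,t-1}$ through the last summand $\frac{3t\alpha}{2\Delta}x_{s,t}$; since $3t\neq 0$ for $t\geq 1$, I can use this freedom to cancel the bad part -- i.e.\ the image in $\OO(Y_{01})/(\OO(Y_0)+\OO(Y_1))$, spanned by the monomials $\frac{1}{u^pv^q}$ with $p,q\geq 1$ -- of the other three terms of $A_{s+1,t-1}$, which have already been computed. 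A direct calculation using $du=\frac{u\,d\Delta+27v\,\alpha}{3\Delta}$, $dv=\frac{v\,d\Delta+u^2\alpha}{2\Delta}$, together with the key identity $2j+3k=n$, shows that at the top the $\frac{d\Delta}{\Delta}$-contribution from $dx_{2n,0}$ cancels cleanly against $\frac{n}{6}\frac{d\Delta}{\Delta}x_{2n,0}$, leaving only $\alpha/\Delta$-terms; from there the induction proceeds through $s=2n-1,2n-2,\ldots,0$.

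For (ii), once $\omega^{(1)}_{j,k}-\omega^{(0)}_{j,k}=\nabla_0 l_{j,k}$ is arranged, applying $\nabla_0$ shows that $\nabla_0\omega^{(0)}_{j,k}$ and $\nabla_0\omega^{(1)}_{j,k}$ agree on $U_{01}$, so they glue to a global section $\eta\in\F^2_{2n}(\Mbar_{1,1})$. Its class lies in $E_2^{0,2}=E_\infty^{0,2}$, a subquotient of the total $H^2$ which agrees with $H^2_\DR(\M_{1,1},S^{2n}\cH)\cong H^2(\M_{1,1}^\an,S^{2n}\H)$. For $n>0$ this vanishes since $\M_{1,1}^\an$ is a noncompact real 2-manifold. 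Hence $\eta=-\nabla_0\tau$ for some $\tau\in\F^1_{2n}(\Mbar_{1,1})$, and replacing each $\omega^{(i)}_{j,k}$ by $\omega^{(i)}_{j,k}+\tau|_{U_i}$ preserves the $\rmC$ech difference while rendering both 1-forms $\nabla_0$-closed.

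The main obstacle I expect is verifying that the descending induction in (i) terminates cleanly at $s=0$: after the last adjustable coefficient $x_{0,2n}$ is chosen, no further $x_{-1,2n+1}$ is available to absorb any remaining bad monomials in $A_{0,2n}$. The cleanest way to settle this is a dimension count. By Remark \ref{global}, $\dim E_\infty^{0,1}=\dim H^1_{\nabla_0}(\F^\bullet_{2n}(\Mbar_{1,1}))=\dim M_{2n+2}$; combined with Eichler--Shimura this gives $\dim E_\infty^{1,0}=\dim S_{2n+2}$, which by Lemma \ref{dcl} equals the number of pairs $(j,k)$ with $4j+6k=2n$. The classes $[\frac{1}{u^jv^k}\Ss^{2n}]$ corresponding to these pairs are the top-weight elements in the basis of $E_1^{1,0}$ from Lemma \ref{span} (those with $4p+6q=2n$), and together they must span $\ker d_1$; the iterative procedure above is then an explicit witness to this vanishing and necessarily terminates without residual obstruction.
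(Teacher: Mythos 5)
Your overall strategy coincides with the paper's: build $l_{j,k}$ by descending induction on the power of $\Ss$, using the coefficient $\tfrac{3t\alpha}{2\Delta}x_{s,t}$ of the next function term to cancel the ``bad'' monomials $\tfrac{1}{u^pv^q}\tfrac{\alpha}{\Delta}$ (with $p,q\ge 1$) produced one order higher, and then split $\nabla_0 l_{j,k}$ into $\omega^{(1)}_{j,k}-\omega^{(0)}_{j,k}$. Your step (ii) is a genuinely different and valid route to the closedness of the two $1$-forms: the paper simply distributes the terms by denominator and cites \cite[Cor. 3.5]{brown-hain} (or a direct check), whereas you glue $\nabla_0\omega^{(0)}_{j,k}=\nabla_0\omega^{(1)}_{j,k}$ into a global section of $\F^2_{2n}$ and kill it using $E_2^{0,2}=E_\infty^{0,2}=0$, which follows from $H^2_\DR(\M_{1,1},S^{2n}\cH)=0$. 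That argument is correct and arguably cleaner, at the cost of losing the explicit description of the splitting.

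The gap is in your termination argument. The assertion that the classes $[\tfrac{1}{u^jv^k}\Ss^{2n}]$ with $4j+6k=2n$ ``together must span $\ker d_1$'' is false as stated: these classes are not individually in the kernel. For instance, with $n=7$ and $(j,k)=(2,1)$ one has
$$\nabla_0\Bigl(\tfrac{1}{u^2v}\Ss^{14}\Bigr)=-\Bigl(\tfrac{18}{u^3}+\tfrac{1}{2v^2}\Bigr)\tfrac{\alpha}{\Delta}\Ss^{14}-\tfrac{7}{4uv}\tfrac{\alpha}{\Delta}\Ss^{13}\T,$$
whose last term is bad and survives in $E_1^{1,1}$, so $[\tfrac{1}{u^2v}\Ss^{14}]\notin\ker d_1$; the actual kernel element is $[\tfrac{1}{u^2v}\Ss^{14}]+\tfrac{7}{12}[\tfrac{1}{uv}\Ss^{12}\T^2]$. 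The dimension count only shows $\dim\ker d_1=\#\{(j,k)\}$; to conclude that each top class extends to a kernel element you would additionally need that the projection of $\ker d_1$ onto the span of the order-$2n$ basis classes is injective, which you do not prove. More importantly, the obstruction you flag at $s=0$ does not exist, and seeing why removes the need for the dimension count entirely: a $\G_m$-invariant bad $1$-form $\tfrac{1}{u^pv^q}\tfrac{\alpha}{\Delta}\Ss^{s'}\T^{t'}$ with $p,q\ge 1$ forces $s'-t'=4p+6q+2\ge 12$, i.e.\ $s'\ge n+6$, and a bad function coefficient $x_{s,t}$ forces $s\ge n+5$. Hence no component $A_{s',t'}$ with $s'\le n+5$ can contain a bad term, the induction terminates after choosing $x_{n+5,n-5}$ (a multiple of $\tfrac{1}{uv}\Ss^{n+5}\T^{n-5}$), and in particular $A_{0,2n}$ needs no correction. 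This weight count is exactly how the paper closes the argument; without it (or a corrected version of your dimension argument), your proof of part (i) is incomplete.
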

\begin{proof}
After familiarizing the concepts of {\em order} and {\em bad terms} in the next paragraph, the reader is advised to read Example \ref{msk} first, as the proof that follows uses the same strategy.

Let us order the terms in $l_{j,k}$ and $\nabla_0 l_{j,k}$ by the power of $\Ss$, then $\Ss^{2n}$ has the highest order $2n$, while $\T^{2n}$ has the lowest order $0$. By using (\ref{nabla0}), one can compute that
\begin{align}
\nabla_0\left(\frac{1}{u^pv^q}\Ss^s\T^t\right)=&\quad\frac{3t}{2u^pv^q}\frac{\alpha}{\Delta}\Ss^{s+1}\T^{t-1}\label{1}
\\&-\left(\frac{9p}{u^{p+1}v^{q-1}}\frac{\alpha}{\Delta}+\frac{q}{2u^{p-2}v^{q+1}}\frac{\alpha}{\Delta}\right)\Ss^s\T^t\label{2}
\\&\quad-\frac{s}{8u^{p-1}v^q}\frac{\alpha}{\Delta}\Ss^{s-1}\T^{t+1}\label{3}
\end{align}
where the terms on the right hand side in (\ref{1}), (\ref{2}), (\ref{3}) have orders $s+1$, $s$, $s-1$ respectively. Our objective is to express $\nabla_0 l_{j,k}$ as a difference $\omega_{j,k}^{(1)}-\omega_{j,k}^{(0)}$. We call a term ``bad" if its denominater has positive powers of both $u$ and $v$ (as we cannot write it as $\omega^{(1)}-\omega^{(0)}$). We will eliminate all bad terms appearing in $\nabla_0 l_{j,k}$ by adding $\nabla_0$-coboundaries. 

To find $l_{j,k}$, we start with $\frac{1}{u^jv^k}\Ss^{2n}$, then $\nabla_0(\frac{1}{u^jv^k}\Ss^{2n})$ has terms of order $2n$ and $2n-1$ (the $(2n+1)$-order term being $0$). We can use order $2n-1$ terms $x_{2n-1,1}\Ss^{2n-1}\T$ to cancel the order $2n$ bad terms in $\nabla_0(\frac{1}{u^jv^k}\Ss^{2n})$ since the order $2n$ term in $\nabla_0(x_{2n-1,1}\Ss^{2n-1}\T)$ is just a rational multiple of $x_{2n-1,1}\frac{\alpha}{\Delta}\Ss^{2n}$ by the formula in (\ref{1}). Now $\nabla_0(\frac{1}{u^jv^k}\Ss^{2n}+x_{2n-1,1}\Ss^{2n-1}\T)$ has bad terms of order at most $2n-1$. 

We can repeat this process until the bad terms have order $n+7$, due to the fact that there always exist positive integer solutions for $4p+6q=2r$ with $2r>12$, which correspond to bad terms of the form $\frac{1}{u^pv^q}\Ss^{n+r}\T^{n-r}$. They are used to cancel rational multiples of $\frac{1}{u^pv^q}\frac{\alpha}{\Delta}\Ss^{n+r+1}\T^{n-r-1}$. 

Now we need to vary the argument above to find the last term of order $n+5$, at which time the process terminates. We distinguish terms $\frac{1}{u^pv^q}\Ss^s\T^t$ and $\frac{1}{u^pv^q}\frac{\alpha}{\Delta}\Ss^s\T^t$ by calling them ``function" and ``form" respectively. 
By our process above,
\begin{equation}\label{4}
\nabla_0\left(\frac{1}{u^jv^k}\Ss^{2n}+x_{2n-1,1}\Ss^{2n-1}\T+\cdots+x_{n+7,n-7}\Ss^{n+7}\T^{n-7}\right)
\end{equation}
has bad forms of order at most $n+7$ and at least $n+6$. In fact, any form of order $n+7$ is a linear combination of $\frac{\alpha}{u^3\Delta}\Ss^{n+7}\T^{n-7}$ and $\frac{\alpha}{v^2\Delta}\Ss^{n+7}\T^{n-7}$, neither of which is a bad form. We are thus left with only bad forms of order $n+6$ in (\ref{4}). The only bad form of order $n+6$, up to a rational multiple, is $\frac{1}{uv}\frac{\alpha}{\Delta}\Ss^{n+6}\T^{n-6}$. To cancel these bad forms of order $n+6$, we can choose a rational multiple of $\frac{1}{uv}\Ss^{n+5}\T^{n-5}$ to be the last term of $l_{j,k}$. Therefore, the only possible bad terms in $\nabla_0(l_{j,k})$ comes from the order $n+5$ and $n+4$ parts of $\nabla_0(\frac{1}{uv}\Ss^{n+5}\T^{n-5})$. Both these parts have no bad terms, which can be easily checked by formulas (\ref{2}) and (\ref{3}).

Eventually, we have a linear combination of bad terms $$l_{j,k}:=\frac{1}{u^jv^k}\Ss^{2n}+x_{2n-1,1}\Ss^{2n-1}\T+\cdots+x_{n+7,n-7}\Ss^{n+7}\T^{n-7}+x_{n+5,n-5}\Ss^{n+5}\T^{n-5}$$ such that
$\nabla_0(l_{j,k})$ has no bad terms. After finding this $l_{j,k}$, it is routine to find $\omega_{j,k}^{(0)}$ and $\omega_{j,k}^{(1)}$ by putting all terms with powers of $u$ in the denominator into $\omega_{j,k}^{(0)}$ and putting all terms with powers of $v$ in the denominator into $\omega_{j,k}^{(1)}$. It remains to show that both $\omega_{j,k}^{(0)}$ and $\omega_{j,k}^{(1)}$ are $\nabla_0$-closed. This follows from \cite[Cor. 3.5]{brown-hain}, or one can check directly. 
\end{proof}

\begin{remark}
For different pairs of integers $(j,k)$, the classes $[\wt\omega_{j,k}]\in H^1_\DR(\M_{1,1}, S^{2n}\cH)$ are linearly independent because of Lemma \ref{span} and the fact that $l_{j,k}$ starts with $\frac{1}{u^jv^k}\Ss^{2n}$.
\end{remark}

\begin{example}{\bf The cocycle $\wt\omega_{2,1}$ that represents $[\wt\omega_{2,1}]\in H^1_\DR(\M_{1,1}, S^{14}\cH)$.}\label{msk}
We carry out the process in the above proposition when $(j,k)=(2,1)$ and $n=7$. Starting with $\frac{1}{u^2v}\Ss^{14}$ that represents $[\frac{1}{u^2v}\Ss^{14}]\in H^1_\delta(\U,\F^0_{14})$, we have
$$
\nabla_0\left(\frac{1}{u^2v}\Ss^{14}\right)=-\left(\frac{18}{u^3}\frac{\alpha}{\Delta}+\frac{1}{2v^2}\frac{\alpha}{\Delta}\right)\Ss^{14}-\frac{7}{4uv}\frac{\alpha}{\Delta}\Ss^{13}\T
$$
with a bad term of order $13=n+6$, which can be eliminated by adding a multiple of the function $\frac{1}{uv}\Ss^{12}\T^2$ of order $n+5=12$. Since
$$
\nabla_0\left(\frac{1}{uv}\Ss^{12}\T^2\right)=\frac{3}{uv}\frac{\alpha}{\Delta}-\left(\frac{9}{u^2}\frac{\alpha}{\Delta}+\frac{u}{2v^2}\frac{\alpha}{\Delta}\right)\Ss^{12}\T^2-\frac{3}{2v}\frac{\alpha}{\Delta}\Ss^{11}\T^3,
$$
we choose the correct multiple $\frac{7}{12}$ for $\frac{1}{uv}\T^2\Ss^{12}$, and 
\begin{align*}
\nabla_0\left(\frac{1}{u^2v}\Ss^{14}+\frac{7}{12}\frac{1}{uv}\Ss^{12}\T^2\right)&=-\left(\frac{18}{u^3}\frac{\alpha}{\Delta}+\frac{1}{2v^2}\frac{\alpha}{\Delta}\right)\Ss^{14}-\left(\frac{21}{4u^2}\frac{\alpha}{\Delta}+\frac{7u}{24v^2}\frac{\alpha}{\Delta}\right)\Ss^{12}\T^2
\\&\quad-\frac{7}{8v}\frac{\alpha}{\Delta}\Ss^{11}\T^3
\end{align*}
has no bad terms.

Define $l_{2,1}=\frac{1}{u^2v}\Ss^{14}+\frac{7}{12}\frac{1}{uv}\Ss^{12}\T^2$,
$\omega_{2,1}^{(0)}=\frac{18}{u^3}\frac{\alpha}{\Delta}\Ss^{14}+\frac{21}{4u^2}\frac{\alpha}{\Delta}\Ss^{12}\T^2,$ and $\omega_{2,1}^{(1)}=-\frac{1}{2v^2}\frac{\alpha}{\Delta}\Ss^{14}-\frac{7u}{24v^2}\frac{\alpha}{\Delta}\Ss^{12}\T^2-\frac{7}{8v}\frac{\alpha}{\Delta}\Ss^{11}\T^3$. Then the cocycle
$$\wt{\omega}_{2,1}:=\begin{array}{|cc}
(\omega_{2,1}^{(0)},\omega_{2,1}^{(1)}) &0\\
0& l_{2,1}\\
\hline
\end{array}
$$
is a modular form of the second kind, and represents the class $[\wt\omega_{2,1}]$ we are looking for.
\end{example}

To summarize, we have
\begin{theorem}\label{ADRstr}
For each positive integer $n$, there is a canonical comparison isomorphism
$$\comp_{\B,\DR}: H^1_\DR(\M_{1,1}, S^{2n}\cH)\otimes_\Q\C\xrightarrow{\sim} H^1(\M_{1,1}^\an,S^{2n}\H_\B)\otimes_\Q\C.$$
Moreover, there is a canonical basis for $H^1_\DR(\M_{1,1}, S^{2n}\cH)$, consisting of $[\wt\omega_f]$ for each Hecke eigenform $f$ of weight $2n+2$ and $[\wt\omega_{j,k}]$ for each pair of positive integers $(j,k)$ such that $4j+6k=2n$.
\end{theorem}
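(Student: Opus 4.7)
The plan is to combine already-built machinery: Deligne's theorem for the comparison isomorphism, and the cocycle constructions of Sections \ref{cmf} and \ref{nmf} for the basis. For $\comp_{\B,\DR}$, I would simply invoke Theorem \ref{DRstr} with $X = \M_{1,1}$ and coefficient bundle $\cV = S^{2n}\cH$ carrying the Gauss--Manin connection of Section \ref{gaussmanin}; this is precisely the content of the $\Q$-de Rham structure as defined in Example \ref{prieg}, so no further work is needed here.

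For the basis statement, my approach is a dimension count followed by a linear-independence argument. The Eichler--Shimura--Zucker decomposition (\ref{a})--(\ref{c}) yields
$$\dim_\Q H^1(\M_{1,1}^\an, S^{2n}\H_\B) \;=\; \dim_\Q M_{2n+2} \;+\; \dim_\Q S_{2n+2},$$
the first summand being the holomorphic part and the second corresponding to antiholomorphic cusp forms. By Lemma \ref{dcl} this matches the cardinality of the proposed set: one $[\wt\omega_f]$ for each element of a Hecke eigenbasis of $M_{2n+2}$, plus one $[\wt\omega_{j,k}]$ for each positive integer pair with $4j+6k=2n$.

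For independence I would use the second spectral sequence of the $\rmC$ech--de Rham double complex $\cC^\bullet(\U,\F_{2n}^\bullet)$, which as noted in Section \ref{nmf} degenerates at $E_3$ and produces a filtration of $H^1_\DR$ with graded pieces $E_\infty^{0,1}$ and $E_\infty^{1,0}$; the latter is a subquotient of $H^1_\delta(\U,\F_{2n}^0)$. The classes $[\wt\omega_f]$ have vanishing $l$-component (Example \ref{cocycle}), hence lie in the subspace $E_\infty^{0,1}$; by (\ref{poly}) together with (\ref{c}) they map bijectively onto a basis of $F^{2n+1}H^1\cong M_{2n+2}\otimes_\Q\C$, so they are linearly independent among themselves. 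The classes $[\wt\omega_{j,k}]$ project to $E_\infty^{1,0}$ through their leading term $\tfrac{1}{u^jv^k}\Ss^{2n}$, and by Lemma \ref{span} these projections are $\Q$-linearly independent in $H^1_\delta(\U,\F_{2n}^0)$. Pairing the two projections then forces any $\Q$-linear relation in $H^1_\DR(\M_{1,1},S^{2n}\cH)$ among the combined family to be trivial.

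The main obstacle does not lie in Theorem \ref{ADRstr} itself but in the preparatory Proposition \ref{ncf}: the genuinely delicate step is the inductive cancellation of ``bad terms'' in $\nabla_0 l_{j,k}$ so that the resulting representatives have only logarithmic singularities at the cusp. Once that construction is in hand, the present theorem reduces to Deligne's comparison together with the spectral-sequence and dimension bookkeeping sketched above.
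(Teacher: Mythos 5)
Your proposal is correct and follows essentially the same route as the paper: the paper states Theorem \ref{ADRstr} as a summary of the preceding section, with the comparison isomorphism coming from Deligne's Theorem \ref{DRstr} as set up in Example \ref{prieg}, the classes $[\wt\omega_f]$ spanning $F^{2n+1}$ via \eqref{poly} and \eqref{c}, the classes $[\wt\omega_{j,k}]$ supplied by Proposition \ref{ncf}, their independence modulo $F^{2n+1}$ deduced from Lemma \ref{span} and the leading term $\tfrac{1}{u^jv^k}\Ss^{2n}$ of $l_{j,k}$, and the count closed by Lemma \ref{dcl} together with Eichler--Shimura. Your spectral-sequence phrasing of the independence step is just a more explicit rendering of the paper's remark following Proposition \ref{ncf}, and your identification of Proposition \ref{ncf} as the real content is accurate.
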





\section{$\Q$-structures for the Relative Completion of $\SL_2(\Z)$}\label{qstr}

Suppose that $\Gamma$ is a discrete group, usually the fundamental group of a manifold. Let $R$ be a reductive group defined over $\Q$, and $\rho:\Gamma\to R(\Q)$ a Zariski dense homomorphism. The relative completion of $\Gamma$ with respect to $\rho$ is a proalgebraic group $\cG$, defined over $\Q$, which is an extension 
$$1\to\cU\to\cG\to R\to 1$$  
of $R$ by a prounipotent group $\cU$, together with a Zariski dense homomorphism $\Gamma\to\cG(\Q)$. When $R$ is the trivial group, then $\cU=\cG$ is the unipotent completion of $\Gamma$.

Hain \cite{rc} developed a de Rham theory for the relative completion, which generalizes Chen's $\pi_1$-de Rham theorem from the unipotent case to the relative case. In both cases, the de Rham construction for the fundamental group depends on base points. This dependence is abstract and indirect, but can become evident when we put more structures on the fundamental group, e.g. mixed Hodge structures \cite{rc}. To understand this dependence, we need a more concrete approach. 
 
In this section, to concretely construct the relative completion, we start by giving a useful cohomological criterion, following Hain \cite[\S 4.2]{hdr}. The dependence on base points for the relative completion of a fundamental group naturally leads us to find a canonical universal flat connection (cf. \cite[\S 14.2]{rc}, \cite{cee,kzb,mykzb}). The connection can be found by using a suitable modification of Chen's method of power series connections \cite{chen}. Hain used this method (see Prop. \ref{procedure}) to construct a Betti $\Q$-structure for the relative completion of $\SL_2(\Z)$, which underlies a canonical mixed Hodge structure for each choice of a base point \cite[\S 7]{hdr}. After reviewing his construction in Section \ref{Bstr}, we use essentially the same method to explicitly construct a $\Q$-de Rham structure for the relative completion of $\SL_2(\Z)$. 

We will work complex analytically until Sections \ref{Bstr}--\ref{inter} where we discuss $\Q$-structures.

\subsection{A characterization of relative completion}
This subsection reviews Hain's cohomological criterion, details and proofs can be found in \cite[\S 4.2]{hdr}.

Suppose that $X$ is the orbifold quotient $\Gamma\dbs M$ of a simply connected manifold $M$ by a discrete group $\Gamma$. We view $\Gamma$ as the fundamental group of $X$. Our main example is when $M=\h$ is the upper half plane, $\Gamma=\SL_2(\Z)$, and $X=\M^\an_{1,1}$.

Denote by $\cG$ the relative completion of $\Gamma$ with respect to $\rho:\Gamma\to R$. Denote its unipotent radical by $\cU$. By Levi's splitting theorem, the relative completion can be expressed (non-canonically) as a semi-direct product $\cG\cong R\ltimes\cU$.\footnote{We will always assume $R$ acts on $\cU$ on the left.} We will give a criterion for when a homomorphism $\Gamma\to R\ltimes\cU$ induces an isomorphism $\cG\to R\ltimes\cU$. 

Denote by $\u$ the Lie algebra of $\cU$. Suppose that $\nabla_0$ is a connection on a right principal $\cU$-bundle $M\times\cU\to M$. A 1-form $\omega\in E^1(M)\,\hat\otimes\,\u$ defines a connection
$$\nabla s=\nabla_0 s+\omega s$$
on this bundle, where $s:M\to \cU$ is a section. This connection is flat, i.e. the 1-form $\omega$ being integrable, if and only if
$$\nabla_0\omega+\frac12[\omega,\omega]=0.$$
The 1-form $\omega$ is $\Gamma$-invariant if and only if 
$(\gamma^*\otimes 1)\omega=(1\otimes\gamma)\omega$ for any $\gamma\in\Gamma$. Each integrable $\Gamma$-invariant 1-form $\omega\in E^1(M)\,\hat\otimes\,\u$ defines a homomorphism
\begin{equation}\label{pt}
\tilde{\rho}_x:\Gamma\to R\ltimes\cU
\end{equation}
induced by Chen's parallel transport formula, see details in \cite[Prop. 4.3]{hdr}. 

Suppose $V$ is a finite dimensional $R$-module and that $\cU\to\Aut V$ is an $R$-equivariant homomorphism. Denote by $\V$ the corresponding local system over $X$. Now we are ready to state the criterion. We have the following lemma and proposition \cite[Lem. 4.5, Prop. 4.6]{hdr}.
\begin{lemma}
If $R$ is reductive, each integrable $\Gamma$-invariant 1-form $\omega\in E^1(M)\,\hat\otimes\,\u$ induces a ring homomorphism
$$\theta^*_\omega: [H^\bullet(\u)\otimes V]^R\to H^\bullet(X,\V).$$
\end{lemma}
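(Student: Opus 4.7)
The plan is to construct $\theta_\omega$ at the cochain level and descend to cohomology. On the left, I compute $H^\bullet(\u)$ via the continuous Chevalley--Eilenberg complex $\Lambda^\bullet\u^\vee$; together with the action of $\u$ on $V$ obtained by differentiating $\cU\to\Aut V$, the twisted complex $C^\bullet(\u,V) := \Lambda^\bullet\u^\vee\,\hat\otimes\,V$ with its Chevalley--Eilenberg differential $d_{CE}$ computes $H^\bullet(\u,V)$. Because $R$ is reductive and acts compatibly on $\u$ and $V$, the functor of $R$-invariants is exact, so $[H^\bullet(\u)\otimes V]^R$ is the cohomology of $[C^\bullet(\u,V)]^R$.

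On cochains I define $\theta_\omega$ by iterated contraction with $\omega$: for $\alpha\in\Lambda^p\u^\vee\,\hat\otimes\,V$, insert $p$ copies of $\omega\in E^1(M)\,\hat\otimes\,\u$ into the $\u^\vee$ slots using the evaluation pairing $\u^\vee\otimes\u\to\Q$ and wedge the resulting $1$-forms, producing $\theta_\omega(\alpha)\in E^p(M)\otimes V$. The central identity to verify is $d\circ\theta_\omega = \theta_\omega\circ d_{CE}$, where $d$ on the target is the exterior derivative twisted by the connection induced on $V$ by $\omega$ via $\u\to\operatorname{End}V$. The Koszul summand of $d_{CE}$, dual to the bracket on $\u$, is absorbed by the $\tfrac12[\omega,\omega]$ contribution; the coefficient summand of $d_{CE}$, dual to the $\u$-action on $V$, is absorbed by the connection twist; the two are reconciled by the integrability relation $\nabla_0\omega+\tfrac12[\omega,\omega]=0$, so $\theta_\omega$ is a cochain map.

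For descent to $X$: $\Gamma$ acts on $M$ by deck transformations and on $V$ through $\rho$ followed by the $R$-action. The $\Gamma$-invariance of $\omega$ combined with the $R$-equivariance of $\cU\to\Aut V$ implies that $\theta_\omega$ sends $R$-invariant cochains to $\Gamma$-equivariant $V$-valued forms on $M$; such forms descend to sections of the de Rham complex of $X$ with coefficients in the flat bundle underlying $\V$. Passing to cohomology yields the required map $\theta^*_\omega\colon [H^\bullet(\u)\otimes V]^R\to H^\bullet(X,\V)$. Ring multiplicativity comes from the fact that contraction with $\omega$ is compatible with the shuffle--wedge product on $\Lambda^\bullet\u^\vee$ and the wedge product on $E^\bullet(M)$, so $\theta_\omega$ is a morphism of DGAs once $V$ carries the $R\ltimes\cU$-equivariant multiplicative structure implicit in the statement.

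The principal obstacle is the chain-map identity $d\theta_\omega = \theta_\omega d_{CE}$: its verification forces a careful dualization of the $\u$-action on $V$ and honest bookkeeping of signs, and it is the single step in which the integrability of $\omega$ is genuinely used. Everything else -- the $\Gamma$-descent, the product compatibility, and the passage from $R$-invariant cochains to cohomology -- is then formal.
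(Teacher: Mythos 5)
First, a point of reference: the paper does not prove this lemma itself; it states it as a quotation of Hain \cite[Lem.~4.5]{hdr}, so your proposal can only be measured against the construction given there. Your overall architecture --- represent classes by continuous Chevalley--Eilenberg cochains, contract with $\omega$, use the integrability of $\omega$ to obtain a cochain map, and use the $\Gamma$-invariance of $\omega$ to descend to $X$ --- is the right one and is exactly the shape of Hain's argument.

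There is, however, a genuine error in how you set up the two complexes. The source of the map in the statement is $[H^\bullet(\u)\otimes V]^R$, i.e.\ Lie algebra cohomology of $\u$ with \emph{trivial} coefficients, tensored with $V$; you replace it by $[H^\bullet(\u,V)]^R$ computed from the coefficient-twisted complex $\Lambda^\bullet\u^\vee\,\hat\otimes\,V$ with the full Chevalley--Eilenberg differential. These are different functors of $V$ (already in degree zero one has $H^0(\u,V)=V^{\u}$ versus $H^0(\u)\otimes V=V$), so your map has the wrong domain. Correspondingly, on the target you twist the differential on $E^\bullet(M)\otimes V$ by a connection built from $\omega$ via $\u\to\operatorname{End}V$; but $H^\bullet(X,\V)$ is computed by the $\Gamma$-invariant subcomplex of $E^\bullet(M)\otimes V$ with the \emph{untwisted} differential $d\otimes 1$, since $\V$ pulls back to the trivial local system on the simply connected $M$ and its monodromy is carried entirely by the $\Gamma$-action on $V$, not by a connection form. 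The correct construction is the untwisted one on both sides: take $\delta\otimes 1_V$ on $\Lambda^\bullet\u^\vee\otimes V$ and $d\otimes 1_V$ on $E^\bullet(M)\otimes V$; the chain-map identity then reduces to pairing the Koszul differential against $\omega\wedge\omega$ and uses only $\nabla_0\omega+\tfrac12[\omega,\omega]=0$, with no coefficient term to absorb. Your descent step and the use of reductivity of $R$ (exactness of $R$-invariants) are fine, and the multiplicativity claim, though left implicit, is the same formal wedge/contraction compatibility as in the cited source.
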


\begin{proposition}\label{char}
If $R$ is reductive and $\rho:\Gamma\to R$ has Zariski dense image, then the homomorphism $\tilde{\rho}_x:\Gamma\to R\ltimes\cU$ constructed above is the completion of $\Gamma$ with respect to $\rho$ if and only if the homomorphism
$$\theta^*_\omega: [H^j(\u)\otimes V]^R\to H^j(X,\V)$$
is an isomorphism when $j=0,1$ and injective when $j=2$ for any $R$-module $V$.
\end{proposition}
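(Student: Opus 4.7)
The plan is to translate the cohomological condition on $\theta^*_\omega$ into the statement that a canonical comparison homomorphism from the true relative completion of $\Gamma$ to $R\ltimes \cU$ is an isomorphism of proalgebraic groups, and then invoke a standard structural criterion for pronilpotent Lie algebras with reductive group action.

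First I would identify both sides of $\theta_\omega^*$ cohomologically. Applying the Hochschild--Serre spectral sequence to $1 \to \cU \to R\ltimes \cU \to R \to 1$ and using reductivity of $R$ to kill positive $R$-cohomology, one obtains $H^j(R\ltimes \cU, V) \cong [H^j(\u)\otimes V]^R$. On the other side, $H^j(X, \V) \cong H^j(\Gamma, V)$ in the usual way. The map $\theta^*_\omega$ then coincides with the pullback on group cohomology induced by $\tilde\rho_x: \Gamma \to R\ltimes \cU$, which in turn is built from the $\Gamma$-invariant integrable 1-form $\omega$ via Chen's parallel transport formula.

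Next, let $\cG^\rel$ denote the true relative completion of $\Gamma$, with unipotent radical $\cU^\rel$ and Lie algebra $\u^\rel$. By the universal property of $\cG^\rel$, the homomorphism $\tilde\rho_x$ factors uniquely as $\Gamma \to \cG^\rel \xrightarrow{\phi} R\ltimes \cU$ with $\phi$ the identity on $R$. The assertion that $\tilde\rho_x$ realizes the relative completion is exactly that $\phi$ is an isomorphism, equivalently that the restriction $\bar\phi: \u^\rel \to \u$ is an isomorphism of pronilpotent Lie algebras with compatible $R$-action. Under the natural identification $[H^j(\u^\rel) \otimes V]^R \cong H^j(\Gamma, V) \cong H^j(X, \V)$ obtained by applying the same Hochschild--Serre computation to the true relative completion, the map $\theta^*_\omega$ becomes the cohomology pullback $\bar\phi^*$. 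So I must show that $\bar\phi$ is an isomorphism iff $\bar\phi^*$ is an iso in degrees $0, 1$ and injective in degree $2$, for every $R$-module $V$.

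The key structural lemma to invoke is the following: a morphism $\bar\phi: \u^\rel \to \u$ of pronilpotent Lie algebras with compatible $R$-action is an isomorphism iff it induces an iso on $H^1$ and an injection on $H^2$, regarded as $R$-modules. Since $R$ is reductive, complete reducibility translates the condition ``iso/injection of $R$-modules'' into the condition ``iso/injection after applying $[\,\cdot\,\otimes V]^R$ for every $R$-module $V$''. I would prove the lemma by induction up the lower central series of $\u^\rel$: the $H^1$ isomorphism forces surjectivity of $\bar\phi$ by a Nakayama-type argument, using the $R$-action to split the graded pieces equivariantly; the $H^2$ injection then forces $\ker\bar\phi = 0$ via Hopf's formula and the five-term exact sequence applied to $0 \to \ker\bar\phi \to \u^\rel \to \u \to 0$. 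The case $j = 0$ is automatic, both sides equaling $V^R$ through the quotient to $R$. The main technical obstacle is this structural lemma: making the lower-central-series induction precise in the pronilpotent, $R$-equivariant setting, and ensuring at each step that the relevant splittings and finiteness conditions (so that the cohomological duality between $H^2$-injectivity and $H_2$-surjectivity is valid) are preserved.
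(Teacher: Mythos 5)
The paper does not actually prove this proposition: it is quoted verbatim from Hain's \emph{Hodge--de Rham theory of modular groups} (the text says ``details and proofs can be found in [hdr, \S 4.2]'', and the statement is [hdr, Prop.~4.6]). So the comparison here is with the cited proof rather than with anything in the present paper. Your overall skeleton --- identify $[H^j(\u)\otimes V]^R$ with the cohomology of $R\ltimes\cU$ via Hochschild--Serre and reductivity, factor $\tilde\rho_x$ through the true relative completion $\cG^\rel$ by the universal property, and reduce the question to a Stallings-type criterion (iso on $H^1$, injection on $H^2$ implies iso) for a morphism $\bar\phi\colon\u^\rel\to\u$ of pronilpotent Lie algebras with $R$-action, proved by induction up the lower central series together with the five-term sequence --- is the standard argument and is essentially the route Hain takes.

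There is, however, one genuine gap. You write that $[H^j(\u^\rel)\otimes V]^R\cong H^j(\Gamma,V)$ is ``obtained by applying the same Hochschild--Serre computation to the true relative completion.'' Hochschild--Serre only gives $[H^j(\u^\rel)\otimes V]^R\cong H^j(\cG^\rel,V)$ (cohomology of the proalgebraic group in algebraic representations). The further comparison map $H^j(\cG^\rel,V)\to H^j(\Gamma,V)$ induced by $\Gamma\to\cG^\rel(\Q)$ being an isomorphism for $j=0,1$ and \emph{injective} (not bijective) for $j=2$ is a substantive theorem about relative completion ([rc, \S 4]); it is exactly the statement that the true completion itself satisfies the criterion being proved. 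Both directions of your argument hinge on it: the ``only if'' direction \emph{is} this theorem once $\phi$ is an isomorphism, and in the ``if'' direction you need it to transfer the hypotheses on $\theta_\omega^*=\iota\circ\bar\phi^*$ into the hypotheses on $\bar\phi^*$ that feed the structural lemma (note also that calling the $j=2$ map an ``identification'' is incorrect --- injectivity of $\iota$ in degree $2$ suffices to conclude $\bar\phi^*$ is injective there, but you must say so). As written, the reduction to the structural lemma does not close without importing this comparison theorem as an explicit input. A secondary caveat: the identification $H^j(X,\V)\cong H^j(\Gamma,V)$ uses that $M$ is contractible (true for $\h$), not merely simply connected, once $j\ge 2$ is in play.
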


\subsection{The bundle $\bU$ of Lie algebras}\label{la}
One needs to find a connection form that satisfies the conditions in Prop. \ref{char}. We first describe the bundle $\bU$ where the sought-after connection lives.

We continue to use the notation as before, working over $\C$, where $\cG$ is the relative completion of $\Gamma$, along with $\cU$ the unipotent radical, and $\u$ its Lie algebra. In fact, there is an isomorphism
$$H_1(\u)\cong\prod_{\alpha\in\cR} H^1(\Gamma,V_\alpha)^*\otimes V_\alpha,$$
where $\alpha$ runs through the set $\cR$ of isomorphism classes of irreducible $R$-modules. When $\u$ is free, there is a non-canonical isomorphism 
$$\u\cong\LL\big(\bigoplus_\alpha H^1(\Gamma,V_\alpha)^*\otimes V_\alpha\big)^\wedge.$$
This is true for our main example, with $\Gamma=\SL_2(\Z)$, $R=\SL_2$. In this case, the Lie algebra $\u^{\rel}$ of the unipotent radical $\cU^{\rel}$ of the relative completion $\cG^\rel$ is freely topologically generated by 
$$\prod_{n\ge 0} H^1(\text{SL}_2(\mathbb Z), S^{2n} H)^*\otimes S^{2n} H,$$
where $H$ is the standard representation of $\text{SL}_2$, and $S^{2n} H$ its $2n$-th symmetric power.

Suppose that $X$ is an affine curve whose fundamental group is isomorphic to $\Gamma$. For any base point $x\in X$, we have a Zariski dense monodromy representation
$$\rho_x:\pi_1(X,x)\to R_x(\Q)$$
where $R_x$ is isomorphic to $R$. Denote the relative completion of $\pi_1(X,x)$ with respect to $\rho_x$ by $\cG_x$. Denote the unipotent radical by $\cU_x$, and its Lie algebra by $\u_x$. Then $\cG_x\cong\cG$, $\cU_x\cong\cU$, and $\u_x\cong\u$. For each $R$-module $V_\alpha$, denote by $\V_\alpha$ the corresponding local system over $X$. We can identify $H^1(\Gamma,V_\alpha)$ with $H^1(X, \V_\alpha)$.

Define
$$\bu_1:=\prod_{\alpha\in \cR} H^1(X, \V_\alpha)^*\otimes \V_\alpha.$$
This is a (pro-)local system over $X$, whose fiber over $x$ is the abelianization of the free Lie algebra $\u_x$.

The degree $n$ part $V\mapsto\LL_n(V)$ of the free Lie algebra is a Schur functor, so that it makes sense to apply it to local systems. Define
$$\bu_n=\LL_n(\bu_1):=\varprojlim_n\LL_n\big(\bigoplus_\alpha H^1(X,\V_\alpha)^*\otimes \V_\alpha\big).$$
For any $u\in\LL(\bu_1)$, we will often denote its degree $n$ part by $(u)_n$.
Set $$\bu=\LL(\bu_1):=\varprojlim_n\bigoplus_{j=1}^n \bu_j,$$
and $$\bu^N:=\varprojlim_{n\ge N}\bigoplus_{j=N}^n \bu_j,$$
the parts of degree at least $N$ in the Lie algebra $\bu$. Define 
$$\bU:=\bu\otimes\OO_X$$ the vector bundle over $X$. Similarly, define vector bundles
$$\bU_n:=\bu_n\otimes\OO_X\qquad\text{ and }\qquad\bU^N:=\bu^N\otimes\OO_X$$
over $X$. There is a filtration 
$$\bU=\bU^1\supset\bU^2\supset\bU^3\supset\cdots$$
on the bundle $\bU$. This filtration, when restricted to a fiber, is the lower central series of the fiber. Note that 
$$\Gr^\bullet_\LCS\bU:=\bU^n/\bU^{n+1}$$
is naturally isomorphic to $\bU_n$. For each $N\ge 1$, the bundle
$$\bU/\bU^N\cong\bU_1\oplus\cdots\oplus\bU_N$$
is flat with monodromy that factors through $\rho_x$. Denote the limit of these flat connections by $\nabla_0$. In the next subsection, we will find an integrable 1-form $\Omega$ so that 
$$\nabla:=\nabla_0+\Omega$$
defines a flat connection on the bundle $\bU$ over $X$.

\subsection{The connection form $\Omega$}\label{connform}
This subsection reviews Hain's approach to find a canonical connection form \cite[\S 7.3]{hdr}, which is modified from Chen's method of power series connections \cite{chen}.

Suppose that $K^\bullet(X)$ is a commutative differential graded algebra (cdga) whose cohomology computes $H^\bullet(X)$, for example, the (analytic) de Rham complex $E^\bullet(X)$ or the (algebraic) logarithmic de Rham complex $\Omega^\bullet_\Xbar(\log C)$ for $X=\Xbar-C$. Suppose that $\bu$ is a free Lie algebra generated by $\bu_1$.\footnote{In fact, $\bu$ can also be a local system or a vector bundle of Lie algebras such that $\bu=\LL(\bu_1)$.} Define a differential graded Lie algebra
$$K^\bullet(X;\bu)=K^\bullet(X)\otimes\bu,$$
where the differential $D$ is induced from the differential of the cdga $K^\bullet(X)$.
Define a bracket $[\cdot,\cdot]$ on $K^\bullet(X;\bu)$ by:
$$[\Omega_\beta\otimes u,\Omega_\gamma\otimes v]:=(\Omega_\beta\wedge\Omega_\gamma)\otimes[u,v],$$
which is induced from the wedge product $\wedge$ on $K^\bullet(X)$ and the Lie bracket $[\cdot,\cdot]$ on $\bu$.

The following result is used to inductively construct the connection form $\Omega$. It is a variant of Chen's method of power series connections \cite{chen}. 
\begin{proposition}\label{procedure}
Suppose that $H^2(K^\bullet(X))=0$, and that we have a closed form
$$\Omega_1\in K^1(X;\bu_1).$$
For each $n\geq 2$, we can find $\Xi_n\in K^1(X;\bu_n)$, and set $$\Omega_n:=\Omega_{n-1}+\Xi_n,$$ so that
$$D\Omega_n+\frac 12[\Omega_n,\Omega_n]\equiv 0 \mod \bu^{n+1}.$$
\end{proposition}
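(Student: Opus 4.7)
The plan is to prove this by induction on $n$, in the standard Maurer--Cartan obstruction style. The base case is essentially free: since $\Omega_1$ is closed, $D\Omega_1 = 0$, and since $[\Omega_1,\Omega_1] \in K^2(X;\bu_2) \subset K^2(X;\bu^2)$, the Maurer--Cartan expression $MC(\Omega_1) := D\Omega_1 + \tfrac12[\Omega_1,\Omega_1]$ vanishes modulo $\bu^2$, giving the hypothesis we need to start the induction.

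For the inductive step, assume we have constructed $\Omega_{n-1}$ with $MC(\Omega_{n-1}) \equiv 0 \pmod{\bu^n}$. Let $\Psi_n \in K^2(X;\bu_n)$ denote the degree-$n$ component of $MC(\Omega_{n-1})$. I want to set $\Omega_n = \Omega_{n-1} + \Xi_n$ for some $\Xi_n \in K^1(X;\bu_n)$, and expand:
\[
MC(\Omega_n) = MC(\Omega_{n-1}) + D\Xi_n + [\Omega_{n-1},\Xi_n] + \tfrac12[\Xi_n,\Xi_n].
\]
Since $\Omega_{n-1}$ has lowest degree $1$ and $\Xi_n \in \bu_n$, the bracket $[\Omega_{n-1},\Xi_n]$ lies in $\bu^{n+1}$; likewise $[\Xi_n,\Xi_n]\in\bu^{2n}\subset\bu^{n+1}$. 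Thus modulo $\bu^{n+1}$ we are reduced to solving $D\Xi_n = -\Psi_n$, and this is possible provided $\Psi_n$ is $D$-closed, using the hypothesis $H^2(K^\bullet(X)) = 0$ (applied degree-by-degree, since $\bu_n$ is a finite-dimensional coefficient module at the graded level).

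The main obstacle, and the one nontrivial computation, is verifying that $\Psi_n$ is indeed closed. For this I will use the graded Bianchi-type identity
\[
D \cdot MC(\Omega_{n-1}) \;=\; [D\Omega_{n-1},\Omega_{n-1}],
\]
which follows from $D^2 = 0$ together with the graded Leibniz rule for the bracket on 1-forms. By the inductive hypothesis, $D\Omega_{n-1} \equiv -\tfrac12[\Omega_{n-1},\Omega_{n-1}] \pmod{\bu^n}$, hence
\[
[D\Omega_{n-1},\Omega_{n-1}] \equiv -\tfrac12\bigl[[\Omega_{n-1},\Omega_{n-1}],\Omega_{n-1}\bigr] \pmod{\bu^{n+1}},
\]
since $[\bu^n,\bu^1]\subset\bu^{n+1}$. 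The graded Jacobi identity applied to three 1-forms gives $3\bigl[\Omega_{n-1},[\Omega_{n-1},\Omega_{n-1}]\bigr] = 0$, which forces the right-hand side to vanish modulo $\bu^{n+1}$. Reading off the degree-$n$ component yields $D\Psi_n = 0$.

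Once $\Psi_n$ is known to be closed, the vanishing of $H^2(K^\bullet(X))$ produces $\Xi_n \in K^1(X;\bu_n)$ with $D\Xi_n = -\Psi_n$; setting $\Omega_n := \Omega_{n-1} + \Xi_n$ then satisfies $MC(\Omega_n) \equiv 0 \pmod{\bu^{n+1}}$, completing the induction. I anticipate the Jacobi/Bianchi manipulation to be the only delicate point; the rest is bookkeeping in the lower central series filtration $\bu\supset\bu^2\supset\bu^3\supset\cdots$ of the free Lie algebra.
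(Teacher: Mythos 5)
Your proposal is correct and follows essentially the same route as the paper: induction on the lower central series, identifying the degree-$n$ obstruction in the Maurer--Cartan expression, proving it is $D$-closed via the Leibniz rule plus the graded Jacobi identity, and then killing it using $H^2(K^\bullet(X))=0$. The only cosmetic difference is that you collapse the paper's two-term expression $\tfrac12[D\Omega,\Omega]-\tfrac12[\Omega,D\Omega]$ into the single Bianchi-type term $[D\Omega,\Omega]$ before applying Jacobi, which is an equivalent bookkeeping choice.
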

\begin{proof}
Since $D\Omega_1=0$, we have that $[\Omega_1,\Omega_1]\in K^2(X;\bu_2)$ is closed. Since $H^2(K^\bullet(X))=0$, the form $[\Omega_1,\Omega_1]$ is thus exact. One can find $\Xi_2\in K^1(X;\bu_2)$ such that $-D\Xi_2=\frac 12[\Omega_1,\Omega_1]$ and $D\Omega_2+\frac 12[\Omega_2,\Omega_2]\equiv 0 \mod \bu^{3}$.

Suppose for $n\geq 2$ we have already found $\Xi_2,\cdots,\Xi_{n}$, and $\Omega_{n}=\Omega_1+\sum_{i=2}^{n}\Xi_i$, such that $D\Omega_{n}+\frac 12[\Omega_{n},\Omega_{n}]\equiv 0 \mod\bu^{n+1}.$ We claim that the degree $(n+1)$ part 
$$(D\Omega_{n}+\frac 12[\Omega_{n},\Omega_{n}])_{n+1}\in K^2(X;\bu_{n+1})$$ is closed. In fact,
\begin{align*}
D(D\Omega_{n}+\frac 12[\Omega_{n},\Omega_{n}])&= \frac 12[D\Omega_{n},\Omega_{n}]-\frac 12[\Omega_{n},D\Omega_{n}]
\\&\equiv\frac 12[(-\frac 12[\Omega_{n},\Omega_{n}]),\Omega_{n}]-\frac 12[\Omega_{n},(-\frac 12[\Omega_{n},\Omega_{n}])]
\\&=-\frac 14[[\Omega_{n},\Omega_{n}],\Omega_{n}]+\frac 14[\Omega_{n},[\Omega_{n},\Omega_{n}]]=0 \mod \bu^{n+2}
\end{align*}
where we have used Leibniz rule of $D$ on the first line, induction hypothesis on the second line, and both terms on the last line are 0 by Jacobi identity. Since $H^2(K^\bullet(X))=0$, the form $(D\Omega_{n}+\frac 12[\Omega_{n},\Omega_{n}])_{n+1}\in K^2(X;\bu_{n+1})$ is thus exact. We can find $\Xi_{n+1}\in K^1(X;\bu_{n+1})$ such that 
$$-D\Xi_{n+1}=(D\Omega_{n}+\frac 12[\Omega_{n},\Omega_{n}])_{n+1}=(\frac 12[\Omega_{n},\Omega_{n}])_{n+1}.$$ 
Define $\Omega_{n+1}:=\Omega_n+\Xi_{n+1}$, then it is easy to check that 
$$D\Omega_{n+1}+\frac 12[\Omega_{n+1},\Omega_{n+1}]\equiv 0 \mod\bu^{n+2}$$
since $D\Omega_{n+1}+\frac 12[\Omega_{n+1},\Omega_{n+1}]\equiv D\Omega_{n}+\frac 12[\Omega_{n},\Omega_{n}]\equiv 0 \mod\bu^{n+1}$ and the degree $(n+1)$ part on the left is $D\Xi_{n+1}+(\frac 12[\Omega_{n},\Omega_{n}])_{n+1}=0$.
\end{proof}

By the proposition, one can define the 1-form
$$\Omega:=\varprojlim_n\Omega_n\in K^1(X;\bu).$$
Then it is integrable, i.e. it satisfies that $$D\Omega+\frac 12[\Omega,\Omega]=0.$$

\subsection{Betti $\Q$-structure for the relative completion}\label{Bstr}
From now on we shall work over $\Q$. We construct the Betti $\Q$-structure for the relative completion, following Hain \cite[\S 7.6]{hdr}. For any base point $x\in X$, we have a Zariski dense monodromy representation $\rho_x:\pi_1(X,x)\to R_x(\Q)$. Denote the relative completion over $\Q$ by $\cG_x$. Denote the Lie algebra of its unipotent radical $\cU_x$ by $\u_x$. These Lie algebras form a local system $\bu_\B$ over $X$. Their abelianizations also form a local system $\bu_{1,\B}$ over $X$. In our main example, we have
$$\bu_{1,\B}=\prod_{n\ge 0} H^1(\M^\an_{1,1},S^{2n}\H_\B)^*\otimes S^{2n}\H_\B.$$

Since $H^1(X,H^1(X,\V_\alpha)^*\otimes\V_\alpha)$ is naturally isomorphic to 
$$H^1(X,\V_\alpha)^*\otimes H^1(X,\V_\alpha)\cong\Hom(H^1(X,\V_\alpha),H^1(X,\V_\alpha)).$$
For each $\alpha\in\cR$, the identity map $H^1(X,\V_\alpha)\to H^1(X,\V_\alpha)$ can be represented by a closed 1-form
$$\omega_\alpha\in K^1(X,H^1(X,\V_\alpha)^*\otimes\V_\alpha),$$
where $K^\bullet(X)$ is the analytic de Rham complex $E^\bullet(X)$ of $X$. Set 
$$\Omega_1:=\prod_{\alpha\in\cR}\omega_\alpha\in K^1(X;\bu_1).$$

Suppose that $X$ is an affine curve, then $H^2(K^\bullet(X))$ vanishes. Since $\Omega_1$ is closed, by Proposition \ref{procedure}, one can find a canonical 1-form $\Omega$ which is integrable. Then $$\nabla_\B:=\nabla_0+\Omega$$
defines a flat \emph{Betti} connection on the holomorphic vector bundle $\bU\cong\bu_\B\otimes_\Q\OO_{X^\an}$ whose fiber over $x$ is identified with the complexification of the Lie algebra $\u_x$, and we view $\Omega$ as acting on each fiber by inner derivations. The parallel transport of $\Omega$ gives rise to a homomorphism (cf. (\ref{pt}))
\begin{equation}\label{rhox}
\tilde{\rho}_x:\pi_1(X,x)\to R\ltimes\cU\cong\cG_\C,
\end{equation}
where $\cG_\C$ denotes the relative completion of $\Gamma$ over $\C$. Base change \cite[\S 3.3]{hdr} implies there is a natural isomorphism $\cG\times_\Q \C\to\cG_\C$. By Proposition \ref{char} and the fact that $\Omega_1$ represents identity maps $H^1(X,\V_\alpha)\to H^1(X,\V_\alpha)$ for all $\alpha\in\cR$, we have that $\tilde{\rho}_x$ induces an isomorphism 
\begin{equation}\label{thetax}
\Theta_x:\cG^\B_x\times_\Q \C\to \cG_\C.
\end{equation}
This provides a Betti $\Q$-structure for the relative completion. In particular, by strictness, Hain \cite{hdr} carefully chooses the 1-forms $\Omega_1=\prod_\alpha \omega_\alpha$ and subsequently $\Omega$ that are compatible with the weight and Hodge filtrations, so that the constructed Betti $\Q$-structure underlies a canonical mixed Hodge structure.

\begin{remark}[Orbifold case]
Suppose that $X=\Xbar-C$ is an orbi-curve, i.e. the orbifold quotient of an affine curve $Y=\Ybar-D$ by a group $G$. Then $G$ acts on the de Rham complex $\Omega^\bullet_Y$ and the logarithmic de Rham complex $\Omega^\bullet_\Ybar(\log D)$ of $Y$. To work with $X$ is to work $G$-equivariantly with $Y$. One chooses either $(\Omega^\bullet_Y)^G$ or $(\Omega^\bullet_\Ybar(\log D))^G$ as $K^\bullet(X)$.
\end{remark}

Note that the orbifold case is applicable to a modular curve. Therefore, we get a Betti $\Q$-structure for the relative completion of its fundamental group, i.e. a modular group \cite{hdr}.

\subsection{$\Q$-de Rham structure for the relative completion}\label{coc}
For the $\Q$-de Rham structure, we focus on our main example: the relative completion of $\SL_2(\Z)$. We continue to work over $\Q$. Recall that $X=\M_{1,1}$ and $\Xbar=\Mbar_{1,1}$ are the stack quotient of $Y=\A^2-D$ and $\Ybar=\A^2-\{(0,0)\}$ by a $\G_m$-action. Set $K^\bullet(\M_{1,1})$ as the single complex associated to the $\rmC$ech--de Rham complex $\cC^\bullet(\U,(\Omega^\bullet_\Ybar(\log D))^{\G_m})$.\footnote{This is the case $n=0$ for the $\rmC$ech--de Rham complex $\cC^\bullet(\U,\F^\bullet_{2n})$, see definition in Section \ref{cech}.} We find in this subsection an explicit $\rmC$ech--de Rham 1-cochain $\wt\Omega$, which serves the same purpose as $\Omega$ in the last subsection for the Betti $\Q$-structure. We will show how to interpretate $\wt\Omega$ as a connection form over $\M_{1,1}$ next in Section \ref{inter}. From this, one constructs a $\Q$-de Rham structure for the relative completion of $\SL_2(\Z)$.

Define an algebraic (pro-)vector bundle over $\M_{1,1}$
\begin{equation}\label{u1dr}
\bu_{1,\DR}:=\prod_{n\ge 0} H^1_\DR(\M_{1,1}, S^{2n}\cH)^*\otimes S^{2n}\cH.
\end{equation}
One can then define bundles $\bu_{n,\DR}$, $\bu^N_\DR$, and $\bu_\DR$ (cf. the definitions for the local systems $\bu_n$, $\bu^N$, and $\bu$ in Section \ref{la}). There is a filtration
$$\bu_\DR=\bu^1_\DR\supset\bu^2_\DR\supset\bu^3_\DR\supset\cdots$$
on the bundle $\bu_\DR$ induced by lower central series on each fiber. For each $n\ge 1$, the graded quotient $\bu^n_\DR/\bu^{n+1}_\DR$ is naturally isomorphic to $\bu_{n,\DR}$, and is equipped with a flat connection $\nabla_0$ induced from the Gauss--Manin connection on $\cH$ defined in Section \ref{gaussmanin}. Summing these, we have that for each $N\ge 1$ the bundle
$$\bu_\DR/\bu^N_\DR\cong\bu_{1,\DR}\oplus\cdots\bu_{N,\DR}$$
is equipped with a flat connection. Denote the limit of these flat connections on $\bu_\DR$ by $\nabla_0$. 

Recall from (\ref{reladr}) that there is a natural isomorphism
$$
\cH\otimes_\Q \C\cong \H_\B\otimes_\Q \OO_{Y^\an},
$$
of bundles with connection. This induces an isomorphism of bundles with connection over $\M^\an_{1,1}$
$$\bu_{1,\DR}\otimes_\Q\C\cong\bu_{1,\B}\otimes_\Q\OO_{\M_{1,1}^\an}.$$
The connection on each side is essentially the Gauss--Manin connection, and is denoted by $\nabla_0$.
If we apply the functor $V\mapsto\LL(V)$, we have an isomorphism of the underlying holomorphic vector bundles
$$\bu_\DR\otimes_\Q\C\cong\bu_\B\otimes_\Q\OO_{\M_{1,1}^\an}.$$
However, this is not an isomorphism of bundles with connections. The bundle on the right hand side $\bU:=\bu_\B\otimes_\Q\OO_{\M_{1,1}^\an}$ is equipped with $\nabla_\B=\nabla_0+\Omega$, where $\Omega$ acts on each fiber by inner derivations. The bundle on the left hand side $\bu_\DR$ with connection $\nabla_0$ is isomorphic to the associated graded $(\Gr^\bullet_\LCS\bU,\nabla_0)$, but not to $(\bU,\nabla_\B)$.\footnote{Since $\Omega$ acts trivially on the graded pieces of $\bU$, we have that the induced connection of $\nabla_\B$ on the associated graded $\Gr^\bullet_\LCS\bU$ is $\nabla_0$, \cite[Lem. 7.4]{hdr}.} 
We will construct a connection form $\wt\Omega$ similar to $\Omega$ on the bundle $\bu_\DR$, which acts on each fiber by inner derivations.

We start by defining a 1-cocycle in $K^1(\M_{1,1};\bu_{1,\DR})$ 
$$\wt\Omega_1=\begin{array}{|cc}
(\Omega_1^{(0)},\Omega_1^{(1)}) &0\\
0& f_1\\
\hline
\end{array}
$$
that represents the identity maps $H^1_\DR(\M_{1,1},S^{2n}\cH)\to H^1_\DR(\M_{1,1},S^{2n}\cH)$ for every $n>0$. It can be written as
$$\wt\Omega_1:=\prod_{n\ge 1}\wt\Omega_{1,2n}\in K^1(\M_{1,1};\bu_{1,\DR}),$$
where we define $\wt\Omega_{1,2n}$ by using 1-cocyles $\wt\omega_f$, $\wt\omega_{j,k}$ found in Section \ref{cmfano},
$$
\wt\Omega_{1,2n}:=\sum_{f}\wt\omega_f\X_f +\sum_{\{(j,k): 4j+6k=2n\}}\wt\omega_{j,k}\X_{j,k}\in K^1(\M_{1,1}; S^{2n}\cH)\otimes H^1_\DR(\M_{1,1}, S^{2n}\cH)^*,
$$
with the first term on the right hand side is summing over Hecke eigenforms $f$ of weight $2n+2$, and all $\X_f,\X_{j,k}\in H^1_\DR(\M_{1,1}, S^{2n}\cH)^*$ form a dual basis for all $\Q$-de Rham classes $[\wt\omega_f]$ and $[\wt\omega_{j,k}]$ in $H^1_\DR(\M_{1,1}, S^{2n}\cH)$. 

For our purpose, we would prefer to write $\wt\Omega_{1,2n}$ in a different way: let
\begin{equation}\label{dw}
\wt\omega_f=\wt\Omega_f\T^{2n}\quad\text{and}\quad\wt\omega_{j,k}=\sum_{m}\wt\Omega^m_{j,k}\Ss^m\T^{2n-m},
\end{equation}
with coefficients $\wt\Omega_f$, and $\wt\Omega^m_{j,k}\in K^1(\M_{1,1}), 0\leq m\leq 2n$. Set
$$\e_f:=\X_f\otimes\T^{2n}\quad\text{and}\quad\e^m_{j,k}:=\X_{j,k}\otimes\Ss^m\T^{2n-m},$$
then $\e_f,\e^m_{j,k}\in\bu_{1,\DR}$, and we can rewrite
$$
\wt\Omega_{1,2n}=\sum_{f}\wt\Omega_f\e_f +\sum_{\{(j,k): 4j+6k=2n\}}\left(\sum_m\wt\Omega^m_{j,k}\e^m_{j,k}\right)\in K^1(\M_{1,1};\bu_{1,\DR}).
$$

Note that $H^2(K^\bullet(\M_{1,1}))$ vanishes.\footnote{By equation (3.3) in \cite{brown-hain} and the fact that $H^j(\M_{1,1},\Q)$ vanishes for $j=1,2$.} Using the procedure in Proposition \ref{procedure}, from the 1-cocycle $\wt\Omega_1\in K^1(\M_{1,1};\bu_{1,\DR})$, we can find a 1-cochain 
$$\wt\Xi_n=\begin{array}{|cc}
(\Xi_n^{(0)},\Xi_n^{(1)}) &0\\
0& f_i\\
\hline
\end{array}
$$
in $K^1(\M_{1,1};\bu_{n,\DR})$, and set  
$$\wt\Omega_n=\begin{array}{|cc}
(\Omega_n^{(0)},\Omega_n^{(1)}) &0\\
0& F_n\\
\hline
\end{array}
$$
recursively as
$$\wt\Omega_n:=\wt\Omega_{n-1}+\wt\Xi_n$$
for each $n\ge 2$, so that
\begin{equation}\label{recur}
-D\wt\Xi_n=(\frac 12[\wt\Omega_{n-1},\wt\Omega_{n-1}])_n.
\end{equation}
 Taking the limit, we have a 1-cochain
$$\wt\Omega=\begin{array}{|cc}
(\Omega^{(0)},\Omega^{(1)}) &0\\
0& F\\
\hline
\end{array}
:=\varprojlim\limits_n\wt\Omega_n=\begin{array}{|cc}
(\varprojlim\limits_n\Omega_n^{(0)},\varprojlim\limits_n\Omega_n^{(1)}) &0\\
0& \varprojlim\limits_n F_n\\
\hline
\end{array}
$$
 in $K^1(\M_{1,1};\bu_\DR)$ such that 
$$D\wt\Omega+\frac 12[\wt\Omega,\wt\Omega]=0.$$

\subsection{Interpretation for $\wt\Omega$ as a connection form}\label{inter}

In this subsection, we interpretate $\wt\Omega\in K^1(\M_{1,1};\bu_\DR)$ as a connection form on the bundle $\bu_\DR$. This naturally provides a $\Q$-de Rham structure for the relative completion of $\SL_2(\Z)$. Note that
$$\wt\Omega=\begin{array}{|cc}
(\Omega^{(0)},\Omega^{(1)}) &0\\
0& F\\
\hline
\end{array}
$$
has three components where $\Omega^{(j)}\in\F^1_0(U_j)\otimes\bu_\DR$, $j=0,1$ and $F\in\F^0_0(U_{01})\otimes\bu_\DR$. Recall that each fiber of the bundle $\bu_\DR$ is isomorphic to $\u^\rel$, and one can view $\bu_\DR$ as a principal $\cU^\rel$-bundle.\footnote{A prounipotent group is naturally isomorphic to its Lie algebra as a group.} We show that there is a gauge transformation $$g:U_{01}\to\cU^\rel\xhookrightarrow{}\Aut(\u^\rel)$$
between the flat connections $$\nabla^{(j)}=\nabla_0+\Omega^{(j)}$$ on the intersection $U_{01}$ of the open subsets $U_j$ of $\Mbar_{1,1}$ given by $g:=1+F\in\cU^\rel(\OO(U_{01}))$.\footnote{Since $\cU^\rel$ is an proalgebraic group, $g$ can be viewed as a $\OO(U_{01})$-point.} By gluing the connections $\nabla^{(j)}$ together on $U_{01}$ via this gauge transformation, we get a flat $\Q$-de Rham connection $$\nabla_\DR=\nabla_0+\wt\Omega$$ on the bundle $\bu_\DR$. Moreover, since we have used the logarithm de Rham complex $(\Omega^\bullet_\Ybar(\log D))^{\G_m}$ for $K^\bullet(\M_{1,1})$, the bundle $\bu_\DR$ we have constructed extends to $\overline\bu_\DR$ over $\Mbar_{1,1}$, which is endowed with the same flat connection $\nabla_\DR$ that is defined over $\Q$ and has regular singularity at the cusp.

To start with, we define a wedge product on $K^\bullet(\M_{1,1})$, following the convention in \cite[(14.24)]{bott-tu}. For example, if $\wt\Omega$ and $\wt\Omega'$ are 1-cochains in $K^1(\M_{1,1})$ with 
$$\wt{\Omega}=\begin{array}{|cc}
(\omega^{(0)},\omega^{(1)}) &0\\
0& l\\
\hline
\end{array}
\qquad
\text{and}
\qquad
\wt\Omega'=\begin{array}{|cc}
(\omega'^{(0)},\omega'^{(1)}) &0\\
0& l'\\
\hline
\end{array}
$$
then their product is given by
\begin{equation}\label{wp}
\wt{\Omega}\wedge\wt\Omega':=\begin{array}{|cc}
(\omega^{(0)}\wedge\omega'^{(0)},\omega^{(1)}\wedge\omega'^{(1)}) &0\\
0& l\cdot\omega'^{(1)}-\omega^{(0)}\cdot l'\\
0& 0\\
\hline
\end{array}
\end{equation}

Since $D\wt\Omega+\frac 12[\wt\Omega,\wt\Omega]\in K^2(\M_{1,1};\bu_\DR)$ a priori has two parts
$$\begin{array}{|cc}
* & 0 \\
0 & * \\
0 & 0 \\
\hline
\end{array}
$$
The top part being $0$ means $\nabla_0\Omega^{(0)}+\Omega^{(0)}\wedge\Omega^{(0)}=0$ and $\nabla_0\Omega^{(1)}+\Omega^{(1)}\wedge\Omega^{(1)}=0$, which tells us that $\Omega^{(j)}$ is integrable on $U_j$ for $j=0,1$. They give rise to flat connections $\nabla^{(j)}=\nabla_0+\Omega^{(j)}$ on $U_j$. The lower part being $0$ means
\begin{equation}\label{gauge}
\Omega^{(1)}-\Omega^{(0)}-\nabla_0 F+F\cdot\Omega^{(1)}-\Omega^{(0)}\cdot F=0,
\end{equation}
where ``$\cdot$" is a product on $K^\bullet(\M_{1,1})\otimes\bu_\DR$ induced by the wedge product on $K^\bullet(\M_{1,1})$ and the Lie bracket on $\bu_\DR$. In fact, this equation (\ref{gauge}) tells us that the connection forms $\Omega^{(0)}$ and $\Omega^{(1)}$ are gauge equivalent on the intersection $U_{01}$ of $U_0$ and $U_1$. 
\begin{proposition}
The function $$g:=1+F\in\cU^\rel(\OO(U_{01}))$$ gauge transforms $\Omega^{(1)}$ to $\Omega^{(0)}$. That is, on $U_{01}$ we have $$\Omega^{(0)}=-\nabla_0 g\cdot g^{-1}+g\,\Omega^{(1)}\,g^{-1}.$$
\end{proposition}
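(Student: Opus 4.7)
The plan is to recognize equation~(\ref{gauge}), the lower-right component of $D\wt\Omega+\tfrac12[\wt\Omega,\wt\Omega]=0$, as the gauge-equivalence identity in disguise. Throughout, I would interpret the Lie-valued \v Cech--de Rham cochains inside the completed universal enveloping algebra $\widehat{U(\u^\rel)}$, where $\u^\rel$ embeds as the primitives; then ``$\cdot$'' is read as the ordinary associative product combined with wedge of forms. This is consistent with the Lie-bracket definition because $\Omega\wedge\Omega = \tfrac12[\Omega,\Omega]$ for any Lie-algebra-valued $1$-form $\Omega$, so the associative product on $1$-forms already encodes the bracket.

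With this interpretation, I would first transpose $\nabla_0 F$ to the right-hand side of~(\ref{gauge}) and factor the remaining terms on the left:
\begin{equation*}
(1 + F)\,\Omega^{(1)} \;-\; \Omega^{(0)}\,(1+F) \;=\; \nabla_0 F.
\end{equation*}
Since the Gauss--Manin connection kills the constant section $1$, the right-hand side equals $\nabla_0(1+F) = \nabla_0 g$, and the identity becomes
\begin{equation*}
g\,\Omega^{(1)} \;-\; \Omega^{(0)}\,g \;=\; \nabla_0 g.
\end{equation*}

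Next I would verify that $g = 1+F$ is invertible on $U_{01}$. Because $F$ is constructed as an inverse limit of $F_n$ with each $f_n \in K^0(\M_{1,1};\bu_{n,\DR})$, it takes values in the pronilpotent Lie algebra $\u^\rel$ filtered by its lower central series $\bu^n$; hence $F$ is topologically nilpotent and
\begin{equation*}
g^{-1} \;=\; \sum_{k\geq 0}(-F)^k
\end{equation*}
converges in $\widehat{U(\u^\rel)}$, furnishing an inverse of $g$ in $\cU^\rel(\OO(U_{01}))$. Right-multiplying the previous identity by $g^{-1}$ then produces
\begin{equation*}
\Omega^{(0)} \;=\; g\,\Omega^{(1)}\,g^{-1} \;-\; (\nabla_0 g)\cdot g^{-1},
\end{equation*}
which is the claim.

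The principal obstacle is interpretive rather than computational: one has to fix a framework in which the \v Cech--de Rham bracket on Lie-valued cochains, the associative product in the enveloping algebra, and the formal group element $g = 1+F \in \cU^\rel$ all coexist coherently -- in particular so that the ``$\cdot$'' appearing in~(\ref{gauge}) unambiguously denotes the enveloping-algebra multiplication, rather than its antisymmetrisation. Once that dictionary is in place, no further geometric input is needed and the argument reduces to the one-line manipulation above.
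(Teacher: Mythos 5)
Your proof is correct, and it reaches the conclusion by a genuinely more direct route than the paper. You take equation~(\ref{gauge}) --- the lower component of $D\wt\Omega+\tfrac12[\wt\Omega,\wt\Omega]=0$, which the paper establishes immediately before the proposition --- read ``$\cdot$'' as the associative product in $\widehat{U(\u^\rel)}$ combined with the \v Cech--de Rham cup product, rewrite the identity as $g\,\Omega^{(1)}-\Omega^{(0)}g=\nabla_0 g$, and right-multiply by $g^{-1}=\sum_{k\ge0}(-F)^k$, which converges because $F$ takes values in the augmentation ideal of the LCS-completed enveloping algebra. The paper instead argues by induction on the lower central series degree: it proves $\Omega_n^{(0)}\equiv-\nabla_0 g_n\cdot g_n^{-1}+g_n\Omega_n^{(1)}g_n^{-1}\bmod\bu_\DR^{n+1}$ for $g_n=1+F_n$, and reduces the degree-$n$ part of the inductive step to the recursion~(\ref{recur}) defining $\wt\Xi_n$ (equivalently, to the degree-$n$ part of~(\ref{gauge})). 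The two arguments rest on the same identity; your version is shorter and makes the gauge-theoretic content transparent, at the cost of having to fix the associative-algebra framework and justify invertibility once and for all, while the paper's truncated induction sidesteps any discussion of convergence and makes visible that the proposition is essentially equivalent to the recursion used to construct $\wt\Omega$ in the first place. Your worry about whether ``$\cdot$'' denotes the enveloping-algebra product or its antisymmetrisation is resolved in your favour by the paper's own proof, which manipulates expressions like $g_{n-1}\cdot\Omega_{n-1}^{(1)}$ with $1\cdot\Omega=\Omega$, i.e.\ associatively with unit; with the bracket reading, formula~(\ref{wp}) would produce extra terms and factors of $\tfrac12$ in the mixed \v Cech bidegree, so the associative reading is the one under which~(\ref{gauge}) is the correct lower component of the flatness equation. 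One point you share with the paper and leave equally implicit: that $1+F$ is not merely invertible in $\widehat{U(\u^\rel)}$ but actually lies in $\cU^\rel(\OO(U_{01}))$ (i.e.\ is group-like); this is not needed for the displayed identity itself, only for the subsequent interpretation of $g$ as a gauge transformation of the principal $\cU^\rel$-bundle.
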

\begin{proof}
Let $g_n:=1+F_n$, then it suffices to prove for every $n$, we have
\begin{equation}\label{gn}
\Omega_{n}^{(0)} \equiv -\nabla_0 g_{n}\cdot g_{n}^{-1}+g_{n}\,\Omega_{n}^{(1)}\,g_{n}^{-1}\qquad\mod \bu^{n+1}_\DR.
\end{equation}
We prove this by induction. When $n=1$, as $\nabla_0(1)=d(1)=0$ and $g_1^{-1}\equiv 1\mod \bu^1_\DR$, (\ref{gn}) becomes
$$\Omega_1^{(0)}=-\nabla_0 f_1+\Omega_1^{(1)},$$
which amounts to the fact that
$$\wt\Omega_1=\begin{array}{|cc}
(\Omega_1^{(0)},\Omega_1^{(1)}) &0\\
0& f_1\\
\hline
\end{array}
$$
is $D$-closed.

Assume that (\ref{gn}) holds for $(n-1)$, we have
\begin{equation}\label{indhyp}
\Omega_{n-1}^{(0)} \equiv -\nabla_0 g_{n-1}\cdot g_{n-1}^{-1}+g_{n-1}\,\Omega_{n-1}^{(1)}\,g_{n-1}^{-1}\qquad\mod \bu^{n}_\DR.
\end{equation}
As for $n$, we only need to prove that the degree $n$ parts on both sides of (\ref{gn}) are the same. On the left hand side, it is $\Xi_n^{(0)}$. One can easily show that $g_n^{-1}\equiv g_{n-1}^{-1}\mod\bu^n_\DR$, so we write the right hand side as
\begin{multline*}
-\nabla_0 g_{n}\cdot g_{n}^{-1}+g_{n}\,\Omega_{n}^{(1)}\,g_{n}^{-1}\\= -\nabla_0 (g_{n-1}+f_n)\cdot (g_{n-1}^{-1}+u_n)+(g_{n-1}+f_n)\,(\Omega_{n-1}^{(1)}+\Xi_n^{(1)})\,(g_{n-1}^{-1}+u_n)
\end{multline*}
with some $u_n\in\bu^{n}_\DR$. Modulo terms in $\bu^{n+1}_\DR$, the degree $n$ part on the right side comes from
$$
-\nabla_0 g_{n-1}\cdot g_{n-1}^{-1}-\nabla_0 f_n+g_{n-1}\,\Omega_{n-1}^{(1)}\,g_{n-1}^{-1}+\Xi_n^{(1)}.
$$
Or equivalently, the degree $n$ part on the right hand side is
$$
\Xi_n^{(1)}-\nabla_0 f_n+(-\nabla_0 g_{n-1}\cdot g_{n-1}^{-1}+g_{n-1}\,\Omega_{n-1}^{(1)}\,g_{n-1}^{-1})_n.
$$

It remains to prove that the above is the same as $\Xi_n^{(0)}$, which is equivalent to showing that
\begin{equation}\label{indstep}
\Xi_n^{(1)}-\Xi_n^{(0)}-\nabla_0 f_n=-(-\nabla_0 g_{n-1}\cdot g_{n-1}^{-1}+g_{n-1}\,\Omega_{n-1}^{(1)}\,g_{n-1}^{-1})_n.
\end{equation}
Note that $\Omega_{n-1}^{(0)}$ has terms only of degree less than $n$, so one can add it to the right hand side without affecting the equality:
$$(-\nabla_0 g_{n-1}\cdot g_{n-1}^{-1}+g_{n-1}\,\Omega_{n-1}^{(1)}\,g_{n-1}^{-1})_n=(-\nabla_0 g_{n-1}\cdot g_{n-1}^{-1}+g_{n-1}\,\Omega_{n-1}^{(1)}\,g_{n-1}^{-1}-\Omega_{n-1}^{(0)})_n.
$$
By the induction hypothesis (\ref{indhyp}), the form in the parenthesis on the right has terms of degree $n$ or higher. When multiplied by $g_{n-1}$ on the right, its degree $n$ part remains unchanged:
$$
(-\nabla_0 g_{n-1}\cdot g_{n-1}^{-1}+g_{n-1}\,\Omega_{n-1}^{(1)}\,g_{n-1}^{-1}-\Omega_{n-1}^{(0)})_n=(-\nabla_0 g_{n-1}+g_{n-1}\cdot\Omega_{n-1}^{(1)}-\Omega_{n-1}^{(0)}\cdot g_{n-1})_n.
$$
Since $g_{n-1}$, and thus $\nabla_0 g_{n-1}$, has terms only of degree less than $n$, we can remove them:
$$
(-\nabla_0 g_{n-1}+g_{n-1}\cdot\Omega_{n-1}^{(1)}-\Omega_{n-1}^{(0)}\cdot g_{n-1})_n=(g_{n-1}\cdot\Omega_{n-1}^{(1)}-\Omega_{n-1}^{(0)}\cdot g_{n-1})_n.
$$
Plugging in $g_{n-1}=1+F_{n-1}$, and then removing the terms $\Omega_{n-1}^{(1)}-\Omega_{n-1}^{(0)}$ of degree less than $n$, we get
$$
(g_{n-1}\cdot\Omega_{n-1}^{(1)}-\Omega_{n-1}^{(0)}\cdot g_{n-1})_n=(F_{n-1}\cdot\Omega_{n-1}^{(1)}-\Omega_{n-1}^{(0)}\cdot F_{n-1})_n.
$$
The equation (\ref{indstep}) now reduces to the equation
$$
\Xi_n^{(1)}-\Xi_n^{(0)}-\nabla_0 f_n+(F_{n-1}\cdot\Omega_{n-1}^{(1)}-\Omega_{n-1}^{(0)}\cdot F_{n-1})_n=0.
$$
This equation holds as it is the equation (\ref{recur}) in the recursive definition of $\wt\Xi_n$ (cf. it also follows from the degree $n$ part of the equation (\ref{gauge})).
\end{proof}

According to the proposition, we can glue the connections $\nabla^{(j)}$ together on $U_{01}$ via this gauge transformation $g$. We obtain a flat $\Q$-de Rham connection $$\nabla_\DR=\nabla_0+\wt\Omega$$ on the bundle $\bu_\DR$ over $\M_{1,1}$.
\begin{theorem}\label{QDRstr}
The de Rham vector bundle $(\bu_\DR,\nabla_\DR)$ can be used to construct a $\Q$-de Rham structure on the relative completion $\cG^\rel$ of $\SL_2(\Z)$ for each base point $x\in\M_{1,1}(\Q)$.
\end{theorem}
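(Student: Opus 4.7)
The plan is to extract the desired $\Q$-de Rham structure directly from the fiber of the Lie-algebra bundle $(\bu_\DR,\nabla_\DR)$ at $x$, and then to compare it over $\C$ with the Betti structure of Section \ref{Bstr} via Theorem \ref{DRbdle}. For $x\in\M_{1,1}(\Q)$, the fiber $\u_{\DR,x}$ is naturally a pronilpotent $\Q$-Lie algebra, with bracket inherited from the free Lie algebra structure on
$$\bu_{1,\DR}=\prod_{n\ge 0}H^1_\DR(\M_{1,1},S^{2n}\cH)^*\otimes S^{2n}\cH,$$
and carrying an action of $\SL_{2/\Q}$ coming from the symmetric powers $S^{2n}\cH|_x$ of the standard representation at $x$. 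Exponentiating and forming the semidirect product yields a proalgebraic $\Q$-group
$$\cG^\DR_x := \SL_{2/\Q}\ltimes\exp(\u_{\DR,x}),$$
our candidate $\Q$-de Rham form of $\cG^\rel$.

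To check that this realizes $\cG^\rel$ after extension of scalars to $\C$, I would invoke Theorem \ref{DRbdle}: restricted to the fiber at $x$, the isomorphism $(\overline\bu_\DR,\nabla_\DR)\otimes_\Q\C\approx(\overline\bU,\nabla_\B)$ yields an isomorphism of complex pronilpotent Lie algebras $\u_{\DR,x}\otimes_\Q\C\cong\u_x$, where $\u_x$ is the fiber at $x$ of the Betti Lie algebra bundle $\bu_\B$. Exponentiating, taking the semidirect product with $\SL_2$, and composing with Hain's isomorphism $\Theta_x$ of \eqref{thetax} produces a comparison
$$\comp_{\B,\DR}:\cG^\DR_x\otimes_\Q\C\xrightarrow{\sim}\cG^\B_x\otimes_\Q\C\xrightarrow{\Theta_x}\cG_\C\cong\cG^\rel\otimes_\Q\C.$$
Under Theorem \ref{DRbdle} the complexified parallel transport of $\nabla_\DR$ matches that of $\nabla_\B$, so by the cohomological criterion of Proposition \ref{char}---already verified for $\nabla_\B$ in Section \ref{Bstr}---the induced homomorphism $\pi_1(\M_{1,1}^\an,x)\to\cG^\DR_x(\C)$ realizes the completion of $\pi_1(\M_{1,1}^\an,x)$ with respect to $\rho_x$.

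The main subtlety will be to verify that the isomorphism of Theorem \ref{DRbdle}, which a priori concerns only bundles with flat connection, actually respects the Lie bracket fiberwise. This should trace back to the fact that both $\Omega$ (Betti, Section \ref{Bstr}) and $\wt\Omega$ (de Rham, Section \ref{coc}) are built by the same inductive recursion of Proposition \ref{procedure} from the ``identity'' cocycle on the abelianization $\bu_{1,\DR}$, so the comparison preserves brackets degree by degree along the lower central series. A secondary point to address is independence of choices: the higher-order forms $\wt\Xi_n$ produced by Proposition \ref{procedure} are only determined up to a $\nabla_0$-exact addition, but such modifications alter the associated representation by conjugation by an element of $\cU^\rel$, hence produce the same $\Q$-de Rham structure on $\cG^\rel$.
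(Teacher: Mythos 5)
Your proposal has a genuine logical gap: it is circular relative to the paper's architecture. You invoke Theorem \ref{DRbdle} as the engine that identifies $\u_{\DR,x}\otimes_\Q\C$ with the Betti fiber $\u_x$ and transfers the verification of Proposition \ref{char} from $\nabla_\B$ to $\nabla_\DR$. But the proof of Theorem \ref{DRbdle} itself \emph{uses} the isomorphism $\Theta_x:\cG^\DR_x\times_\Q\C\to\cG^\rel_\C$ of \eqref{theta}, which is precisely what the proof of Theorem \ref{QDRstr} establishes: the rigidity-lemma argument there needs the monodromy of $\nabla_\DR$ to already factor through a homomorphism $\theta^\DR_x:\cG^\DR_x\to\Aut(\cU^\rel)$, i.e.\ it needs $\cG^\DR_x$ to have already been identified as a form of the relative completion. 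So you cannot take Theorem \ref{DRbdle} as an input here. The same circularity undermines your claim that ``the complexified parallel transport of $\nabla_\DR$ matches that of $\nabla_\B$, so the cohomological criterion already verified for $\nabla_\B$ transfers'': matching the two parallel transports is the hard content of Theorem \ref{DRbdle}, not something available before Theorem \ref{QDRstr} is proved.

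The fix is the direct argument the paper gives, and your closing heuristic (``both $\Omega$ and $\wt\Omega$ are built by the same recursion from the identity cocycle'') points at it without executing it. One pulls back $\Omega^{(0)}$, $\Omega^{(1)}$ along $\psi$ to $\SL_2(\Z)$-invariant forms on $\h$, defines parallel transport on each open and patches across $U_{01}$ via the gauge transformation $g=1+F$ (the Proposition of Section \ref{inter}), obtaining $\tilde\rho_x:\pi_1(\M_{1,1}^\an,x)\to\SL_2\ltimes\cU^\rel$. One then verifies Proposition \ref{char} \emph{directly} for $\wt\Omega$: since $\wt\Omega_1$ represents the identity maps on $H^1_\DR(\M_{1,1},S^{2n}\cH)$ and the algebraic de Rham comparison of Theorem \ref{ADRstr} identifies $H^1_\DR(\M_{1,1},S^{2n}\cH)\otimes_\Q\C$ with $H^1(\M_{1,1}^\an,S^{2n}\H)\otimes_\Q\C$, the maps $\theta^*_{\wt\Omega}$ are isomorphisms in degrees $0,1$ and injective in degree $2$, with no appeal to the Betti connection. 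Your secondary points (bracket compatibility, independence of the choices of $\wt\Xi_n$ up to gauge) are reasonable observations but do not repair the main dependency.
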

\begin{proof}
Note that the 1-forms $\Omega^{(0)}$ and $\Omega^{(1)}$ in $\wt\Omega$ are $\G_m$-invariant on $Y^\an$. After we pull back these 1-forms along the map $\psi$ (defined in (\ref{psi})), we obtain $\SL_2(\Z)$-invariant 1-forms on the upper half plane $\h$. We can now define transport formula on $\h$ by using the pullback of integrable connection forms $\Omega^{(0)}$ and $\Omega^{(1)}$ on opens $U_0$ and $U_1$ respectively, then patching things together on their intersection via the gauge transformation $g$. This gives rise to a homomorphism (cf. homomorphism (\ref{rhox}))
$$
\tilde{\rho}_x:\pi_1(\M^\an_{1,1},x)\to \SL_2\ltimes\cU^\rel\cong\cG^\rel_\C.
$$ 
By Proposition \ref{char} and the fact that $\wt\Omega_1$ represents identity maps on $H^1_\DR(\M_{1,1},S^{2n}\cH)$ for every $n\ge0$, we have that $\tilde{\rho}_x$ induces an isomorphism (cf. isomorphism (\ref{thetax}))
\begin{equation}\label{theta}
\Theta_x:\cG^\DR_x\times_\Q \C\to \cG^\rel_\C.
\end{equation}
This provides a $\Q$-de Rham structure on the relative completion of $\SL_2(\Z)$ for each base point $x\in\M_{1,1}(\Q)$.
\end{proof}
Combined with the isomorphism (\ref{thetax}) in our case, we have the following result. It was proved in \cite[\S 12]{brown:mmm} using Riemann--Hilbert correspondence and tannakian formalism.
\begin{corollary}
There is a comparison isomorphism
$$\comp_{\B,\DR}:\cG^\DR_x\times_\Q \C\to\cG^\B_x\times_\Q \C$$
for each base point $x\in\M_{1,1}(\Q)$.
\end{corollary}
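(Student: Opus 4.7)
The plan is to obtain the comparison isomorphism as a composition of the two previously constructed isomorphisms: the Betti comparison $\Theta_x^\B$ from Section \ref{Bstr} and the de Rham comparison $\Theta_x^\DR$ from Theorem \ref{QDRstr}. Both of these compare the respective $\Q$-structures against the \emph{same} complex relative completion $\cG^\rel_\C$ of $\SL_2(\Z)$ (viewed as the fundamental group of the orbifold $\M_{1,1}^\an$ at the base point $x$), so the target group agrees and the composition makes sense.

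Concretely, recall from (\ref{thetax}) that the parallel transport of the Betti connection form $\Omega$ on $\bU \cong \bu_\B \otimes_\Q \OO_{\M_{1,1}^\an}$ yields a homomorphism $\tilde{\rho}_x^\B : \pi_1(\M_{1,1}^\an, x) \to \cG^\rel_\C$ which, by the cohomological criterion of Proposition \ref{char} applied to the closed $1$-form $\Omega_1 = \prod_\alpha \omega_\alpha$ representing the identity on each $H^1(X, \V_\alpha)$, induces an isomorphism
\begin{equation*}
\Theta_x^\B : \cG^\B_x \times_\Q \C \xrightarrow{\ \sim\ } \cG^\rel_\C.
\end{equation*}
In parallel, the $\rmC$ech--de Rham version constructed in Sections \ref{coc}--\ref{inter} produces the analogous $\tilde{\rho}_x^\DR$, and the isomorphism (\ref{theta}) of Theorem \ref{QDRstr} gives
\begin{equation*}
\Theta_x^\DR : \cG^\DR_x \times_\Q \C \xrightarrow{\ \sim\ } \cG^\rel_\C.
\end{equation*}

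I would therefore define
\begin{equation*}
\comp_{\B,\DR} := (\Theta_x^\B)^{-1} \circ \Theta_x^\DR : \cG^\DR_x \times_\Q \C \xrightarrow{\ \sim\ } \cG^\B_x \times_\Q \C,
\end{equation*}
which is an isomorphism of proalgebraic groups over $\C$ because each factor is. Since $\cG^\DR_x \times_\Q \C$ and $\cG^\B_x \times_\Q \C$ are both extensions of $\SL_{2/\C}$ by prounipotent groups and both $\Theta_x^\B$, $\Theta_x^\DR$ are compatible with the extension structure (they are the identity on the reductive quotient $\SL_2$ and send the unipotent radical to $\cU^\rel_\C$), so is $\comp_{\B,\DR}$.

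There is no real obstacle here, as all the heavy lifting has been done in Theorem \ref{QDRstr} and in Hain's construction recalled in Section \ref{Bstr}; the statement is a formal consequence. If one wished to make the isomorphism more canonical, the only subtlety is that each $\Theta_x^?$ depends on choices made in Chen's iterative procedure (Proposition \ref{procedure}) when solving $-D\Xi_n = \tfrac12 [\Omega_{n-1}, \Omega_{n-1}]_n$; different choices differ by an inner automorphism of $\cU^\rel_\C$, so the induced $\comp_{\B,\DR}$ is canonical up to such an inner automorphism, which is exactly the ambiguity one expects from a comparison of two fiber functors on the tannakian category underlying $\cG^\rel$.
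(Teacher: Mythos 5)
Your proposal matches the paper's own derivation: the corollary is obtained by composing the de Rham isomorphism $\Theta_x$ of (\ref{theta}) from Theorem \ref{QDRstr} with the inverse of the Betti isomorphism (\ref{thetax}), both of which land in the same complex relative completion $\cG^\rel_\C$. Your additional remark on the inner-automorphism ambiguity is a sensible elaboration but does not change the argument.
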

\begin{remark}\label{tv}
In particular, one can choose the base point $x$ to be the unit tangent vector $\partial/\partial q$ at the cusp. The newly constructed $\Q$-de Rham structure and the comparison isomorphism will allow us to compute periods of the relative completion of $\SL_2(\Z)$, i.e. multiple modular values \cite{brown:mmm}, in the next section.
\end{remark}

Finally, we show that the $\Q$-de Rham structure we constructed is canonical. Replacing $S^{n}\cH$ by $S^n\Hbar$ in the definition (\ref{u1dr}) of $\bu_{1,\DR}$, one naturally extends $\bu_\DR$ to a bundle $\overline\bu_\DR$ over $\Mbar_{1,1}$. Recall that there is a Betti vector bundle $\bU=\bu_\B\otimes_\Q\OO_{\M^\an_{1,1}}$ with its flat Betti connection $\nabla_\B=\nabla_0+\Omega$. Denote by $(\overline\bU,\nabla_\B)$ Deligne's canonical extension of $(\bU,\nabla_\B)$ to $\Mbar_{1,1}$.
\begin{theorem}\label{DRbdle}
There is an algebraic de Rham vector bundle $\overline\bu_\DR$ over $\Mbar_{1,1/\Q}$ endowed with connection
$$\nabla_\DR=\nabla_0+\wt\Omega,$$
and an isomorphism 
$$(\overline\bu_\DR,\nabla_\DR)\otimes_\Q\C\approx(\overline\bU,\nabla_\B).$$
The algebraic de Rham vector bundle $\overline\bu_\DR$ and its connection $\nabla_\DR$ are both defined over $\Q$. Moreover, the connection $\nabla_\DR$ has a regular singularity at the cusp.
\end{theorem}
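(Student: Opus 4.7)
The plan is to verify the four assertions in stages: existence of the algebraic bundle $\overline\bu_\DR$ over $\Mbar_{1,1/\Q}$; extension of the connection $\nabla_\DR$; the comparison isomorphism with the Betti bundle; and the regular singularity at the cusp. The first and last are essentially bookkeeping from the construction, while the comparison is the main substantive point.

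For the extension, I would replace $S^{2n}\cH$ by its canonical extension $S^{2n}\Hbar$ in the definition (\ref{u1dr}) of $\bu_{1,\DR}$ to obtain an algebraic $\Q$-bundle $\overline\bu_{1,\DR}$ over $\Mbar_{1,1}$; then form $\overline\bu_\DR = \varprojlim_N \LL(\overline\bu_{1,\DR})/\LL^{\geq N}$ by applying the Schur (free Lie algebra) functor, which is defined over $\Q$. The Gauss--Manin connection $\nabla_0$ on $\cH$ already extends to $\Hbar$ over $\Mbar_{1,1}$ with regular singularity at the cusp (explicitly by formula (\ref{nabla0})), and this induces $\nabla_0$ on $\overline\bu_\DR$. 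The construction of $\wt\Omega$ in Section \ref{coc} already takes place inside the $\rmC$ech--de Rham complex associated to the \emph{logarithmic} de Rham complex $(\Omega^\bullet_\Ybar(\log D))^{\G_m}$, using as input the cocycles $\wt\omega_f, \wt\omega_{j,k}$ of Section \ref{cmfano} which lie in $\F^\bullet_{2n}$ and hence have at worst log singularities at the cusp. Since the induction in Prop. \ref{procedure} uses only Lie brackets and antiderivatives inside this $\Q$-log complex, and $H^2(K^\bullet(\M_{1,1})) = 0$ is the only input required, every $\wt\Xi_n$ lies in the log complex and is defined over $\Q$. Hence $\wt\Omega$ extends and $\nabla_\DR = \nabla_0 + \wt\Omega$ is a $\Q$-connection on $\overline\bu_\DR$ with regular singularity at the cusp.

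For the isomorphism at the level of underlying bundles, I would combine two inputs: the isomorphism $\cH \otimes_\Q \C \cong \H_\B \otimes_\Q \OO_{\M^\an_{1,1}}$ of (\ref{reladr}), extended to canonical extensions over $\Mbar^\an_{1,1}$ and to all symmetric powers $S^{2n}\Hbar$; together with the comparison isomorphism of Theorem \ref{ADRstr} for each $H^1_\DR(\M_{1,1}, S^{2n}\cH)$. Packaging these gives $\overline\bu_{1,\DR} \otimes_\Q \C \cong \overline\bu_{1,\B} \otimes_\Q \OO$ as holomorphic bundles with Gauss--Manin connection $\nabla_0$, and applying $\LL(-)$ and passing to completions yields $\overline\bu_\DR \otimes_\Q \C \cong \overline\bU$ as bundles equipped with $\nabla_0$.

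To promote this to an isomorphism respecting $\wt\Omega$ and $\Omega$, I would appeal to Prop. \ref{char}. By construction, both $\wt\Omega_1$ and $\Omega_1$ represent the identity endomorphism of $H^1_\DR$, respectively $H^1_\B$, hence under the comparison isomorphism above they represent the same cohomology class in the complexified log-$\rmC$ech--de Rham complex (which is quasi-isomorphic to the analytic de Rham complex $E^\bullet(\M_{1,1}^\an)$). Therefore the parallel transports of $\nabla_\DR \otimes \C$ and $\nabla_\B$ both give homomorphisms $\pi_1(\M^\an_{1,1},x) \to \cG^\rel_\C$ which, by Prop. \ref{char} (applied exactly as in the proof of Theorem \ref{QDRstr} and in Section \ref{Bstr}), induce isomorphisms from the relative completion. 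By the universal property of relative completion, these two homomorphisms differ by an inner automorphism of $\cG^\rel_\C$, which corresponds to a global holomorphic gauge transformation $h$ of $\overline\bu_\DR \otimes \C$ intertwining $\nabla_\DR \otimes \C$ with $\nabla_\B$. The hard part will be verifying that this gauge transformation extends across the cusp: both connections have regular singularity there, so the monodromy around the cusp is conjugate via $h$, but one still needs to check that $h$ itself is holomorphic (not merely meromorphic) at the cusp. This follows from the fact that both $\wt\Omega$ and $\Omega$ lie in the log complex with the same leading-order behaviour around $q=0$ (controlled by the $\nabla_0$-part which is identical in both pictures), so the recursive comparison on each graded piece $\bu^n_\DR/\bu^{n+1}_\DR$ produces the components of $h$ as holomorphic functions up to and including the cusp, giving the desired isomorphism over $\Mbar_{1,1/\Q}$.
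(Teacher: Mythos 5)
Your overall strategy coincides with the paper's: reduce to the isomorphism over $\M_{1,1}^\an$, identify the fibers with $\cU^\rel$ via the two completion isomorphisms, and show that the two parallel-transport homomorphisms (equivalently, the two monodromy representations into $\Aut(\cU^\rel)$) agree up to an inner automorphism. The extension and regular-singularity bookkeeping is also handled the same way. There is, however, one genuine gap at the pivotal step.

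You write that since both parallel transports induce isomorphisms from the relative completion, ``by the universal property of relative completion, these two homomorphisms differ by an inner automorphism of $\cG^\rel_\C$.'' The universal property does not give you this. It gives you an automorphism $\psi$ of $\cG^\rel_\C$ over $\SL_2$ intertwining the two homomorphisms, but nothing forces $\psi$ to be inner: an automorphism of the free prounipotent radical $\cU^\rel$ commuting with the $\SL_2$-action could, for example, act by a nontrivial $\SL_2$-equivariant automorphism of $H_1(\u^\rel)$, and such an automorphism is not conjugation by an element of $\cU^\rel$. The paper closes exactly this gap by invoking the rigidity lemma of Hain \cite[Lem.~14.1]{kzb} (applied with $\Gamma=\SL_2\ltimes\cU^\rel$ and $N=\mathcal{N}=\cU^\rel$): the hypothesis needed is that the induced automorphism is the identity on $H_1(\u^\rel)$, which holds precisely because both $\Omega_1$ and $\wt\Omega_1$ represent the identity maps on $H^1(\M_{1,1}^\an, S^{2n}\H)$ under the comparison isomorphism of Theorem \ref{ADRstr}. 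You do record this last fact, so the ingredient is present in your write-up, but the inference ``identity on $H_1$ implies inner'' is a nontrivial rigidity statement, not a formal consequence of universality, and it must be cited or proved.

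A secondary remark: you flag the extension of the gauge transformation across the cusp as ``the hard part,'' whereas the paper sidesteps this by observing that both $\overline\bU$ and $\overline\bu_\DR$ are canonical extensions of regular singular connections, so an isomorphism over $\M_{1,1}$ extends automatically by functoriality of Deligne's canonical extension; your direct argument via leading-order behaviour at $q=0$ is workable but unnecessary, and as stated (``produces the components of $h$ as holomorphic functions up to and including the cusp'') it is more of a claim than a proof.
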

\begin{proof}
The last two statements follow from the construction. For the first statement, note that the extension of a bundle from $\M_{1,1}$ to $\Mbar_{1,1}$ is natural. Therefore, one only needs to prove the isomorphism 
$$(\bu_\DR,\nabla_\DR)\otimes_\Q\C\approx(\bU,\nabla_\B)$$
over $\M_{1,1}^\an$. Now we work over $\C$. Since both bundles are principal $\cU^\rel$-bundles, any fiber is isomorphic to $\cU^\rel$. Fix a base point $x\in\M_{1,1}^\an$. Identify the fiber over $x$ on both bundles with $\cU^\rel$, using (\ref{theta}) and (\ref{thetax}). We need to show that the monodromy representations
$$\rho^\DR_x:\pi_1(\M_{1,1}^\an,x)\to\Aut(\cU^\rel)\quad\text{and}\quad\rho^\B_x:\pi_1(\M_{1,1}^\an,x)\to\Aut(\cU^\rel)$$
induced from the de Rham and Betti connections are the same (up to an inner automorphism). This follows if the induced homomorphisms
$$\theta^\DR_x:\cG^\DR_x\to\Aut(\cU^\rel)\quad\text{and}\quad\theta^\B_x:\cG^\B_x\to\Aut(\cU^\rel)$$
are the same (up to an inner automorphism). The result follows by applying a rigidity lemma \cite[Lem. 14.1]{kzb} with $\Gamma=\SL_2\ltimes\cU^\rel$, $N=\mathcal{N}=\cU^\rel$, and $\phi=\theta^\B_x$.
\end{proof}


\part{Applications to Periods of Modular Forms}\label{pmf}

In this part, we discuss applications to periods of modular forms. In particular, we consider periods of the relative completion of $\SL_2(\Z)$, namely, multiple modular values defined by Brown \cite{brown:mmm}. They are given by (regularized\footnote{See \cite[\S 4]{brown:mmm} for the regularization process.}) iterated integrals\footnote{See \cite[Def. 5.7]{rc} for the definition of these generalized iterated integrals.} of {\em modular forms of the second kind}\footnote{See Def. \ref{mfsk} for the definition of modular forms of the second kind.}. We demonstrate with the simplest unfamiliar examples of multiple modular values, and provide explicit computations and constructions. 

For multiple modular values in length\footnote{Length refers to the number of forms one puts in an iterated integral.} one, we focus on periods of cusp forms. Each Hecke eigen cusp form $f$ is associated with a rank two cuspidal Hecke eigenspace, and should have four periods, corresponding to the entries of a $2\times2$ period matrix. Two of them are classical, given by Eichler integrals \cite{manin}. Critical values of the $L$-function $L(f,s)$ associated to $f$, depending on parity of $s$, are rational multiples of these two classical periods (cf. Deligne's conjecture). The other two periods are called quasi-periods \cite{brown:real, brown-hain}, which are not well-known. We consider the first interesting example of cusp form, Ramanujan's cusp form $\Delta$ of weight 12. Numerical results for its quasi-periods were obtained in \cite[\S 8]{brown:real} by integrating a weakly modular form found in \cite[\S 5]{brown-hain}. We use a modular form of the second kind to integrate, then numerically check that our method gives the same quasi-periods up to a linear combination. 

The main application of our $\Q$-de Rham theory is to construct iterated integrals of modular forms of the second kind. Brown \cite{brown:mmm}, generalizing Manin's work \cite{manin1,manin2}, mainly studied and computed those iterated integrals of modular forms that are `totally holomorphic', i.e. involving only {\em holomorphic modular forms}\footnote{Holomorphic modular forms are defined in Section \ref{cmf}.}. We show, by way of example, how to construct a double integral, i.e. iterated integral of length two, that involves a {\em non-holomorphic modular form}\footnote{Non-holomorphic modular forms refer to those modular forms of the second kind that are not holomorphic.}. The construction is explicit and essentially follows the procedure in Proposition \ref{procedure} up to length two. This is applicable to arbitrary length iterated integrals, from which one can carry out numerical computations to obtain all multiple modular values.  

\section{Periods and Quasi-periods of Cusp Forms}\label{pcf}
In the context of periods, the Eichler--Shimura isomorphisms \cite{eichler,shimura} (cf. (\ref{a})--(\ref{c}) in the introduction of Section \ref{cmfano}) can be expressed as
\begin{align}
M_{2n+2}\otimes_\Q\C&\xrightarrow{\sim} H^1(\M_{1,1}^\an,S^{2n}\H_\B)^+\otimes_\Q\C \\
S_{2n+2}\otimes_\Q\C&\xrightarrow{\sim} H^1(\M_{1,1}^\an,S^{2n}\H_\B)^-\otimes_\Q\C
\end{align}
where $S_{2n}\subset M_{2n}$ denote the subspace of cusp forms in the space of modular forms of weight $2n$ and level one, and $\pm$ denote eigenspaces with respect to the real Frobenius (complex conjugation). They are pieces of a canonical comparison isomorphism
\begin{equation}\label{ADR}
\comp_{\B,\DR}: H^1_\DR(\M_{1,1}, S^{2n}\cH)\otimes_\Q\C\xrightarrow{\sim} H^1(\M_{1,1}^\an,S^{2n}\H_\B)\otimes_\Q\C
\end{equation}
given by Theorem \ref{ADRstr}. This isomorphism is compatible with the action of Hecke operators, cf. Remark \ref{sQstr}. The Hecke eigenspace of an Eisenstein series has dimension one, and that of a cusp form has dimension two. 

Recall, from Theorem \ref{ADRstr}, that we have a basis for $H^1_\DR(\M_{1,1}, S^{2n}\cH)$ represented by modular forms of the second kind: $\wt\omega_f$ for each Hecke eigenform $f$ of weight $2n+2$, and $\wt\omega_{j,k}$ for each pair of positive integers $(j,k)$ such that $4j+6k=2n$.

For any Hecke eigenform $f$, the image of
$$\wt{\omega}_f=\begin{array}{|cc}
(\omega^{(0)}_f,\omega^{(1)}_f) &0\\
0& 0\\
\hline
\end{array}
$$
under the comparison isomorphism is given by the cohomology class of the cocycle:
\begin{equation}\label{ei}
\gamma\mapsto\int^{\tau_0}_{\gamma^{-1}\tau_0}\frac23(2\pi i)^{2n+1}f(\tau)(\tau\Y-\X)^{2n}d\tau
\end{equation}
where the integrand on the right hand side is the pullback $\psi^*\omega_f$ on the upper half plane of $\omega_f$ (cf. equation (\ref{omegaf})) along the map $\psi$ (see (\ref{psi})); and we have rewritten $\T=2\pi i(\tau\Y-\X)$ where $\X$, $\Y$ is a basis of $\H_\B$, which corresponds to the first Betti cohomology of the elliptic curve $\C/(\Z\oplus\Z\tau)$, cf. \cite[Appendix A]{brown-hain}. The cohomology class does not depend on the base point $\tau_0$. If $f$ is a cusp form, and $\tau_0$ is the cusp $i\infty$, then the right hand side becomes a classical Eichler integral \cite{eichler}. 

For each pair $(j,k)$, the image of 
$$\wt{\omega}_{j,k}=\begin{array}{|cc}
(\omega^{(0)}_{j,k},\omega^{(1)}_{j,k}) &0\\
0& l_{j,k}\\
\hline
\end{array}
$$
under the comparison isomorphism is given by the cohomology class of the cocycle:
\begin{equation}\label{eisk}
\gamma\mapsto\int^{\tau_0}_{\gamma^{-1}\tau_0}\psi^*\wt\omega_{j,k}.
\end{equation}
where we think of $\wt\omega_{j,k}$ geometrically as a 1-form (Remark \ref{ginter}), and is pulled back to a 1-form on the upper half plane via $\psi$; and we have rewritten $\Ss,\T\in\cH$ in terms of $\X,\Y\in\H_\B$ with
$$\Ss=\Y-2G_2(\tau)\T\qquad\text{and}\qquad\T=2\pi i(\tau\Y-\X),$$
cf. \cite[(2.12) \& Appendix A]{brown-hain}, here we use Zagier's normalization for the weight 2 Eisenstein series $\Eis_2(\tau)$ \cite{zagier}.

We now focus on the case of $n=5$ for (\ref{ADR}). Both sides then have dimension 3. The Hecke operators split it into two eigenspaces: one is 1-dimensional for the Eisenstein series $G_{12}(\tau)$, the other is 2-dimensional for the Ramanujan cusp form $\Delta(\tau)$.

\begin{example}{\bf A cocycle for Eisenstein series $G_{12}$ of weight 12.}
There is a canonical rational cocycle $e^0_{12}$ representing a cohomology class in $H^1(\M_{1,1}^\an,S^{2n}\H_\B)$ on the Betti side, computed explicitly in \cite[\S 7.4]{brown:mmm}. The image of $\wt\omega_{G_{12}}$ under the comparison isomorphism is the cohomology class $(2\pi i)^{11}[e^0_{12}]$. This cohomology class is represented by the cocycle
\begin{equation}\label{eco}
(2\pi i)^{11}e^0_{12}+\frac{10!}{2}\zeta(11)\delta^0(\Y^{10}),
\end{equation}
where $\zeta(\cdot)$ is the Riemann zeta function, and $\delta^0(\Y^{10})$ is a coboundary \cite[\S 2.3]{brown:mmm}. This cocycle was computed by Brown \cite[Lem. 7.1]{brown:mmm} from regularized Eichler integrals in (\ref{ei}), choosing the base point $\tau_0$ to be the unit tangent vector $\partial/\partial q$ at the cusp.
\end{example}

\begin{example}{\bf Periods of Ramanujan's cusp form $\Delta$ of weight 12.}
First, we compute classical periods of $\Delta$. They are totally holomorphic multiple modular values. We restrict the isomorphism (\ref{ADR}) to the 2-dimensional cuspidal Hecke eigenspace of $\Delta$. One then picks a basis $[P^+], [P^-]$ on the Betti side that is compatible with the real Frobenius as in \cite{brown:real}. Here $[P^+]$ and $[P^-]$ are cohomology classes represented respectively by cocycles $P^+$ and $P^-$ for $\SL_2(\Z)$ with coeffients in $S^{10}\H_\B$. Note that $S=\begin{pmatrix}0 & -1 \cr 1 & 0 \end{pmatrix}$ and $T=\begin{pmatrix}1 & 1 \cr 0 & 1\end{pmatrix}$ generate the group $\SL_2(\Z)$. Both $P^+$ and $P^-$ are cuspidal \cite[(7.3)]{brown:mmm}, i.e. their values on $T$ vanish. They are thus determined by their values on $S$:
$$P^+_S=\frac{36}{691}(\Y^{10}-\X^{10})+\X^2\Y^2(\X^2-\Y^2)^3\; , \; P^-_S=4\X^9\Y-25\X^7\Y^3+42\X^5\Y^5-25\X^3\Y^7+4\X\Y^9.$$ 
Using the method in \cite[\S 2]{brown:real}, picking a finite base point $\tau_0$ in the upper half plane, one computes the cocycle associated to $\wt\omega_\Delta$ in (\ref{ei}). This cocycle is, up to a coboundary,
$$\omega^+P^++i\omega^-P^-$$
where 
$$
\omega^+=-45944515.206396796502\ldots \quad , \quad
\omega^-=-3723343.5862069345779\ldots\quad
$$ 
are classical periods of $\Delta$. They agree with numerical values in the literature, cf. \cite{brown:real, kz}.

Now we compute quasi-periods\footnote{This follows the terminology for algebraic curves, where quasi-periods are periods of differential forms of the second kind.} of $\Delta$. We need to consider the modular form of the second kind $\wt\omega_{1,1}$. Using the same method above, one computes the cocycle in (\ref{eisk}) associated to $\wt\omega_{1,1}$. This cocycle is not a priori cuspidal, as it involves a multiple of the Eisenstein cocycle (\ref{eco}). After removing this multiple, the remaining cuspidal cocycle, up to a coboundary, is
$$\eta^+P^++i\eta^-P^-$$
where 
$$
\eta^+=-80.895549086122696192\ldots \quad , \quad
\eta^-=6.5585174555519361006\ldots\quad
$$
are quasi-periods of $\Delta$. They are the first examples of multiple modular values that are not totally holomorphic. One can check numerically that $\begin{pmatrix} \eta^+ \cr \eta^- \end{pmatrix}$ is a $\Q$-linear combination (see Remark \ref{lc} below) of the periods $\begin{pmatrix} \wt\eta_+ \cr \wt\eta_- \end{pmatrix}$ and $\begin{pmatrix} \wt\omega_+ \cr \wt\omega_- \end{pmatrix}$ computed by Brown \cite[\S 8.3]{brown:real}, and that the period matrix 
\begin{equation}\label{pm}
\begin{pmatrix} \eta^+ & \omega^+ \cr i\eta^- & i\omega^-\end{pmatrix}
\end{equation}
of $\Delta$ has determinant $-(2\pi i)^{11}$, which is compatible with \cite[Thm. 1.7]{brown-hain}.
\end{example}

\begin{remark}
Using the same method, one can compute quasi-periods of any Hecke eigen cusp form.
\end{remark}

\begin{remark}\label{lc}
Based on numerical computations, we have
$$\begin{pmatrix} \eta^+ \cr \eta^- \end{pmatrix}=-\frac{1}{10!}\left[\frac32\begin{pmatrix} \wt\eta_+ \cr \wt\eta_- \end{pmatrix}+\left(4\cdot 691+\frac{237}{691}\right)\begin{pmatrix} \wt\omega_+ \cr \wt\omega_- \end{pmatrix}\right].$$
It would be interesting to derive the linear combination using cohomological calculations. This would help describe the $\Q$-de Rham classes $[\wt\omega_{j,k}]$ represented by modular forms of the second kind, on which one could possibly define an action of Hecke operators.
\end{remark}

\section{Iterated Integrals of Modular Forms of the Second Kind}\label{dbint}
In this section, we construct iterated integrals of modular forms of the second kind, from which one can compute multiple modular values. Based on the de Rham theory of the relative completion \cite{rc}, one needs these iterated integrals to be closed, i.e. homotopy invariant\footnote{In \cite{rc}, these iterated integrals are called locally constant, since they can be viewed as locally constant functions on the path space.}, on $\M_{1,1}$. This requirement is already encoded in our construction of the connection form $\wt\Omega$ (Section \ref{coc}). From $\wt\Omega$, one can explicitly construct closed iterated integrals of modular forms of the second kind in arbitrary length. We show this by an example of a double integral that involves a non-holomorphic modular form. First, we review how to construct a closed double integral on a Riemann surface.

\begin{example}{\bf Double integrals of meromorphic 1-forms.}\label{recipe}
Suppose we are given two meromorphic 1-forms $\omega$ and $\eta$ on a Riemann surface $M$. To construct a closed iterated integral of these two differential forms, one follows the recipe in Hain \cite[\S 3]{gmhs}. Define 
$$\omega\cup\eta=\sum a_j p_j\qquad a_j\in\C,\ p_j\in M$$
if, locally, $\omega=dF$ and
$$(2\pi i)^{-1}\Res_{p_j}F\eta=a_j.$$
One can find another meromorphic 1-form $\xi$ such that 
$$(2\pi i)^{-1}\Res_{p_j}\xi=-a_j.$$
Then the double integral
$$\int\omega\eta+\xi$$
is a closed iterated integral on $M$.
\end{example}

\begin{remark}
If both $\omega$ and $\eta$ are holomorphic, then one could set $\xi=0$. This is the reason why `totally holomorphic' iterated integrals of modular forms are easy to construct; one essentially just strings together the holomorphic modular forms, see \cite[\S 3]{brown:mmm}, \cite{manin1,manin2}. 
\end{remark}

Recall that we have two classes $[\wt\omega_\Delta]$ and $[\wt\omega_{1,1}]$ in $H^1_\DR(\M_{1,1},S^{10}\cH)$, each can be represented by a modular form of the second kind. By Examples \ref{deltaform} \& \ref{cocycle}, the class $[\wt\omega_\Delta]$ is represented by 
$$\wt{\omega}_{\Delta}=\begin{array}{|cc}
(\omega_{\Delta}^{(0)},\omega_{\Delta}^{(1)}) &0\\
0& 0\\
\hline
\end{array}
$$
where $\omega_\Delta^{(j)}$ is the restriction to $U_j$ of a {\em global} 1-form on $\M_{1,1}$
$$\omega_\Delta=\alpha\T^{10}=(2udv-3vdu)\T^{10}.$$
By Example \ref{fndrc}, the class $[\wt\omega_{1,1}]$ is represented by 
$$\wt{\omega}_{1,1}=\begin{array}{|cc}
(\omega_{1,1}^{(0)},\omega_{1,1}^{(1)}) &0\\
0& l_{1,1}\\
\hline
\end{array}
$$
where $\omega_{1,1}^{(0)}=\frac{9\alpha}{u^2\Delta}\Ss^{10}\in\F_{10}^1(U_0)$, $\omega_{1,1}^{(1)}=-\frac{u\alpha}{2v^2\Delta}\Ss^{10}-\frac{5\alpha}{4v\Delta}\Ss^9\T\in\F_{10}^1(U_1)$, and $l_{1,1}=\frac 1{uv}\Ss^{10}\in\F_{10}^0(U_{01})$.
As in (\ref{dw}), we write
$$\wt\omega_\Delta=\wt\Omega_\Delta\otimes\T^{10}\quad\text{and}\quad\wt\omega_{1,1}=\wt\Omega_{1,1}^{10}\otimes\Ss^{10}+\wt\Omega_{1,1}^9\otimes\Ss^9\T,$$
where 
$\wt\Omega_\Delta=\begin{array}{|cc}
(\alpha,\alpha) &0\\
0& 0\\
\hline
\end{array}$,
$\wt\Omega_{1,1}^{10}=\begin{array}{|cc}
(\frac{9\alpha}{u^2\Delta},-\frac{u\alpha}{2v^2\Delta}) &0\\
0& \frac 1{uv}\\
\hline
\end{array}$,
and 
$\wt\Omega_{1,1}^{9}=\begin{array}{|cc}
(0,-\frac{5\alpha}{4v\Delta}) &0\\
0& 0\\
\hline
\end{array}$.

Our objective is to construct, from modular forms of the second kind $\wt\omega_\Delta$ and $\wt\omega_{1,1}$, a double integral
$$\int\wt\omega_\Delta\wt\omega_{1,1}+\wt\xi$$
which generalizes Example \ref{recipe}, and makes sense geometrically on $\M_{1,1}$.

Set a skew symmetric inner product on $\cH$ such that $\li\Ss,\T\ri=-\li\T,\Ss\ri=-1$, cf. \cite[\S 5.1]{brown-hain}. This induces a symmetric inner product $\li,\ri$ on $S^{2n}\cH$, with $\li\T^{2n},\Ss^{2n}\ri$=1. This inner product and the wedge product (\ref{wp}) on $K^\bullet(\M_{1,1})$ induce a pairing $\{,\}$ on modular forms of the second kind and a cup product $\cup$ on $H^1_\DR(\M_{1,1}, S^{2n}\cH)$, cf. \cite[\S 5.4--5.5]{brown-hain}. Now we are ready to construct our first new example of iterated integrals of modular forms. It is a closed double integral that involves a non-holomorphic modular form.

\begin{example}{\bf A double integral of modular forms of the second kind.}\label{di}
By the pairing just defined, we have 
$$
\{\wt\omega_\Delta,\wt\omega_{1,1}\}=(\wt\Omega_\Delta\wedge\wt\Omega_{1,1}^{10})\li\T^{10},\Ss^{10}\ri=\begin{array}{|cc}
(0,0) &0\\
0 & -\frac{\alpha}{uv}\\
0& 0\\
\hline
\end{array}
$$
where $-\frac{\alpha}{uv}=-\frac{2udv-3vdu}{uv}=3\frac{du}{u}-2\frac{dv}{v}$. One can find a 1-cochain $\wt\xi$
$$\wt\xi:=\begin{array}{|cc}
(\xi^{(0)},\xi^{(1)}) &0\\
0& l\\
\hline
\end{array}$$
with $\xi^{(0)}=3\frac{du}{u}$, $\xi^{(1)}=2\frac{dv}{v}$ and $l=0$, so that 
\begin{equation}\label{xi}
D\wt\xi+\{\wt\omega_\Delta,\wt\omega_{1,1}\}=0.
\end{equation}
This is essentially the procedure of Proposition \ref{procedure} in the case $n=2$. This provides us with, formally, a double integral on $\M_{1,1}$
$$\int\wt\omega_\Delta\wt\omega_{1,1}+\wt\xi:=\begin{array}{|cc}
(\int\omega^{(0)}\eta^{(0)}+\xi^{(0)},\int\omega^{(1)}\eta^{(1)}+\xi^{(1)}) &0\\
0& 0\\
\hline
\end{array}$$
It consists of a double integral
$$\int\omega^{(j)}\eta^{(j)}+\xi^{(j)}$$
on each open subset $U_j$ of $\M_{1,1}$ for $j=0,1$, with $\omega^{(0)}=\omega^{(1)}=\alpha$ coming from the top row of $\wt\Omega_\Delta=\begin{array}{|cc}
(\alpha,\alpha) &0\\
0& 0\\
\hline
\end{array}$, along with $\eta^{(0)}:=\frac{9\alpha}{u^2\Delta}$ and $\eta^{(1)}:=-\frac{u\alpha}{2v^2\Delta}$ coming from the top row of
$\wt\Omega_{1,1}^{10}=\begin{array}{|cc}
(\frac{9\alpha}{u^2\Delta},-\frac{u\alpha}{2v^2\Delta}) &0\\
0& \frac 1{uv}\\
\hline
\end{array}$.
In fact, both double integrals are closed. One can follow Example \ref{recipe} and check that on $U_0$
$$\omega^{(0)}\cup\eta^{(0)}=-3[\rho]$$
where $[\rho]\in\M_{1,1}^\an$ is defined by the $\G_m$-orbit $u=0$. Therefore, the 1-form $\xi^{(0)}=3\frac{du}{u}$ makes the double integral $\int\omega^{(0)}\eta^{(0)}+\xi^{(0)}$ closed. On $U_1$, similarly, one can check that
$$\omega^{(1)}\cup\eta^{(1)}=-2[i]$$
where $[i]\in\M_{1,1}^\an$ is defined by the $\G_m$-orbit $v=0$. Therefore, the 1-form $\xi^{(1)}=2\frac{dv}{v}$ makes the double integral $\int\omega^{(1)}\eta^{(1)}+\xi^{(1)}$ closed.
\end{example}

\begin{remark}
Note that $\frac{du}{u}=\frac 13 \frac{d\Delta}{\Delta}+\frac{27v\alpha}{u\Delta}$ and $\frac{dv}{v}=\frac 12 \frac{d\Delta}{\Delta}+\frac{u^2\alpha}{2v\Delta}$. So if one were to transfer either the residue $-3$ at $[\rho]$ from $\omega\cup\eta^{(0)}$ or the residue $-2$ at $[i]$ from $\omega\cup\eta^{(1)}$ to the cusp, one would get the same residue $-1$. This is expected since we are essentially computing a cup product of $[\omega_\Delta]$ and $[\wt\omega_{1,1}]$, with both $\eta^{(0)}$ and $\eta^{(1)}$ representing the same class $[\wt\omega_{1,1}]$ on opens $U_0$ and $U_1$ of $\M_{1,1}$. Moreover, the value $-1$ of the cup product is consistent with the determinant of the period matrix (\ref{pm}) of $\Delta$, cf. \cite[\S 5.5]{brown-hain}. 
\end{remark}

\begin{remark}
In general when constructing closed double integrals on $\M_{1,1}$, one can always find $\wt\xi$ satisfying equations like (\ref{xi}) since $H^2(K^\bullet(\M_{1,1}))$ vanishes. As for constructing iterated integrals of higher lengths, there is not much of a difference. One essentially follows the same steps when constructing the connection form $\wt\Omega$.
\end{remark}

We end with a remark on the periods given by Example \ref{di}.
\begin{remark}[\bf A motivic remark]
The double integrals in Example \ref{di} should produce periods of a biextension, cf. \cite[Example 13.4]{hdr}. The biextension data gives part of the period matrix of a mixed modular motive \cite{brown:mmm}. This motive has graded pieces that are pure motives isomorphic to $\Q$, $M$, $\wedge^2 M\cong\Q(-11)$, where $M$ is the motive associated to the Ramanujan cusp form $\Delta$. This is a typical example of a mixed modular motive, as its periods---given by iterated integrals involving modular forms of the second kind that are not holomorphic---are not `totally holomorphic' multiple modular values. In general, it is difficult to study these non-totally holomorphic multiple modular values. Since they are not canonically defined, and very few have been computed. 
\end{remark}


\begin{thebibliography}{99}
\bibitem{bott-tu}
R.~Bott, L.W.~Tu:
{\it Differential Forms in Algebraic Topology},
Graduate Texts in Mathematics, 82. Springer-Verlag, New York-Berlin, 1982. xiv+331 pp. ISBN: 0-387-90613-4

\bibitem{brown:mmm}
F.~Brown:
{\it Multiple modular values and the relative completion of the fundamental group of $\M_{1,1}$},
[$\mathsf{arXiv:1407.5167}$]

\bibitem{brown:real}
F.~Brown:
{\it A class of non-holomorphic modular forms III: real analytic cusp forms for $\SL_2(\Z)$},
[$\mathsf{arXiv:1710.07912}$]

\bibitem{brown-hain}
F.~Brown, R.~Hain:
{\it Algebraic de Rham theory for weakly holomorphic modular forms of level one},
Algebra Number Theory 12 (2018), no. 3, 723--750. 

\bibitem{cee}
D.~Calaque, B.~Enriquez, P.~Etingof:
{\it Universal KZB equations: the elliptic case}, in Algebra, arithmetic, and
geometry: in honor of Yu.~I.~Manin. Vol.~I, 165--266, Progr.\ Math., 269,
Birkh\"auser, Boston, 2009, [$\mathsf{arXiv:math/0702670}$]

\bibitem{chen}
K.T.~Chen:
{\it Extension of $C^\infty$ function algebra by integrals and Malcev completion of $\pi_1$},
Advances in Math. 23 (1977), no. 2, 181--210. 


\bibitem{deligne:ode}
P.~Deligne:
{\em \'Equations diff\'erentielles \`a  points singuliers r\'eguliers}, Lecture
Notes in Mathematics, Vol.~163. Springer-Verlag, 1970.

\bibitem{deligne:h2}
P.~Deligne:
{\it Th\'eorie de Hodge II},
Inst. Hautes \'Etudes Sci. Publ. Math. No. 40 (1971), 5--57. 

\bibitem{dolgachev}
I.~Dolgachev:
{\it Weighted projective varieties},
Group actions and vector fields (Vancouver, B.C., 1981), 34--71, 
Lecture Notes in Math., 956, Springer, Berlin, 1982.

\bibitem{eichler}
M.~Eichler:
{\it Eine Verallgemeinerung der Abelschen Integrale}, 
Math. Z. 67 (1957), 267--298.

\bibitem{guerzhoy}
P.~Guerzhoy:
{\it Hecke operators for weakly holomorphic modular forms and supersingular congruences}. 
Proc. Amer. Math. Soc. 136 (2008), no. 9, 3051--3059. 

\bibitem{gmhs}
R.~Hain:
{\it The geometry of the mixed Hodge structure on the fundamental group},
Algebraic Geometry, Bowdoin, 1985 (Brunswick, Maine, 1985), 247--282, 
Proc. Sympos. Pure Math., 46, Part 2, Amer. Math. Soc., Providence, RI, 1987. 

\bibitem{rc}
R.~Hain: 
{\it The Hodge-de Rham theory of relative Malcev completion},
Ann. Sci. \'{E}cole Norm. Sup., t. 31 (1998), 47--92

\bibitem{kzb}
R.~Hain:
{\it Notes on the universal elliptic KZB equation}, 
Pure and Applied Mathematics Quarterly, vol. 12 no. 2 (2016), International Press, [$\mathsf{arXiv:1309.0580}$].

\bibitem{hdr} 
R.~Hain:
{\it The Hodge-de Rham theory of modular groups},
in Recent Advances in Hodge Theory, 2015, Cambridge University Press. [$\mathsf{arXiv:1403.6443}$]

\bibitem{umem}
R.~Hain, M.~Matsumoto:
{\it Universal mixed elliptic motives},
Journal of the Institute of Mathematics of Jussieu, (2018), 1--104.

\bibitem{km}
N.~Katz, B.~Mazur:
{\it Arithmetic moduli of elliptic curves},
Annals of Mathematics Studies, 108. Princeton University Press, 1985.

\bibitem{kz}
W.~Kohnen, D.~Zagier:
{\it Modular forms with rational periods}. Modular forms (Durham, 1983), 197--249, 
Ellis Horwood Ser. Math. Appl.: Statist. Oper. Res., Horwood, Chichester, 1984. 

\bibitem{mykzb}
M.~Luo:
{\it The elliptic KZB connection and algebraic de Rham theory for unipotent fundamental groups of elliptic curves},
Algebra Number Theory, to appear, available at [$\mathsf{arXiv:1710.07691}$].

\bibitem{manin}
Y.~Manin:
{\it Periods of parabolic forms and p-adic Hecke series},
Math. USSR-Sb. 21 (1973), no. 3, 371--393.

\bibitem{manin1}
Y.~Manin: 
{\it Iterated integrals of modular forms and noncommutative modular symbols}, Algebraic geometry and number theory, 565--597, Progr. Math., 253, Birkh\"{a}user, 2006.

\bibitem{manin2}
Y.~Manin: 
{\it Iterated Shimura integrals}, Mosc. Math. J. 5 (2005), 869--881, 973.

\bibitem{olsson}
M.~Olsson:
{\it Algebraic spaces and stacks},
American Mathematical Society Colloquium Publications, 62. American Mathematical Society, Providence, RI, 2016. xi+298 pp. ISBN: 978-1-4704-2798-6.

\bibitem{scholl-kazalicki}
A.~Scholl, M.~Kazalicki:
{\it Modular forms, de Rham cohomology and congruences},
Trans. Amer. Math. Soc. 368 (2016), no. 10, 7097--7117.

\bibitem{shimura}
G.~Shimura:
{\it Sur les int\'{e}grales attach\'{e}es aux formes automorphes}. 
J. Math. Soc. Japan 11 1959 291--311.

\bibitem{steenbrink}
J.~Steenbrink:
{\it Mixed Hodge structures on the vanishing cohomology}, Real and complex singularities (Proc. Ninth Nordic Summer School/NAVF Sympos. Math., Oslo, 1976), pp. 525--563. Sijthoff and Noordhoff, Alphen aan den Rijn, 1977. 

\bibitem{zagier}
D.~Zagier:
{\em Periods of modular forms and Jacobi theta functions},  Invent.\ Math.\ 
104  (1991),  449--465.

\bibitem{zucker}
S.~Zucker:
{\it Hodge theory with degenerating coefficients: $L_2$ cohomology in the Poincar\'{e} metric}, 
Ann. of Math. (2) 109 (1979), no. 3, 415--476. 


\end{thebibliography}
\end{document}